\title[]{Deforming  three-manifolds with positive scalar curvature}
\author{Fernando Coda Marques}
\date{}
\newtheorem{thm}{Theorem}[section]
\newtheorem*{thm*}{Theorem}
\newtheorem*{mainthm}{Main Theorem}
\newtheorem{prop}[thm]{Proposition}
\newtheorem{lem}[thm]{Lemma}
\newtheorem{cor}[thm]{Corollary}
\numberwithin{equation}{section}
\numberwithin{figure}{section}
\def\g{\rightarrow}
\def\d{\partial}
\def\re{\mathbb{R}}
\def\XXint#1#2#3{{\setbox0=\hbox{$#1{#2#3}{\int}$}
     \vcenter{\hbox{$#2#3$}}\kern-.5\wd0}}
\begin{document}


\begin{abstract}
In this paper we prove that the moduli space  of metrics with positive scalar curvature of an orientable 
compact 3-manifold   is path-connected. The proof uses the Ricci flow with surgery, the conformal method,  and the connected sum construction of Gromov and Lawson. The work of Perelman on Hamilton's Ricci flow is fundamental. As one of the applications we  prove the path-connectedness of the space of trace-free asymptotically flat solutions to the vacuum Einstein constraint equations
on $\re^3$.
\end{abstract}

\maketitle




\section{Introduction}

\footnotetext[1]{2000 {\it Mathematics Subject Classification.} Primary 53C21; Secondary 53C80, 83C05.}

In 1916 H. Weyl  proved the following result:
\begin{thm*}[\cite{WEYL16}]\label{weyl}
Let $g$ be a metric of positive scalar curvature on the two-sphere $S^2$. There exists a continuous
path of metrics $\mu \in [0,1] \g g_\mu$ on $S^2$, of positive scalar curvature, such that $g_0=g$ and $g_1$ has constant curvature.
\end{thm*}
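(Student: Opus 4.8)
The plan is to combine the classical uniformization theorem with the conformal method. By uniformization, there is a smooth function $u$ on $S^2$ and a round metric $h$, say of Gauss curvature $1$, such that $g = e^{2u}\,h$. Since in two dimensions the scalar curvature is twice the Gauss curvature, the hypothesis on $g$ is equivalent to the pointwise inequality $K_g > 0$.

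I would then take the path obtained by interpolating the logarithmic conformal factor linearly, namely $g_\mu := e^{2(1-\mu)u}\,h$ for $\mu \in [0,1]$. Then $g_0 = e^{2u}h = g$ and $g_1 = h$ has constant curvature, and $\mu \mapsto g_\mu$ is smooth, hence continuous in any reasonable topology on metrics; so it only remains to check that every $g_\mu$ has positive curvature.

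For this I would apply the conformal transformation law for the Gauss curvature in dimension two. Writing $\Delta$ for the Laplace--Beltrami operator of $h$, one has $K_{g_\mu} = e^{-2(1-\mu)u}\bigl(1 - (1-\mu)\,\Delta u\bigr)$. Evaluating at $\mu = 0$ shows that the hypothesis $K_g > 0$ says exactly that $\Delta u < 1$ pointwise on $S^2$. Consequently, for each $\mu \in [0,1]$ and each point of $S^2$ one has $(1-\mu)\,\Delta u \le \max\{\Delta u,\,0\} < 1$, whence $K_{g_\mu} > 0$. Reparametrizing if necessary, this is the desired path.

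The real content of the argument is the elementary observation that the open condition $\Delta u < 1$ cutting out the positively curved conformal factors is star-shaped about $u \equiv 0$; once uniformization is granted there is no genuine obstacle, the verification being a one-line pointwise estimate. What is actually hard is the three-dimensional analogue, where no conformal rigidity of this kind is available and the topology of the manifold can change under the natural deformations — this is the problem addressed in the remainder of the paper.
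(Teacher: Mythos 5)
Your proof is correct and is essentially the same as the paper's: uniformization followed by linear interpolation of the logarithmic conformal factor, with positivity checked via the affine dependence of the curvature transformation law on that factor (the convexity used in Proposition \ref{conformal.deformations}). In fact, writing $\overline{g}=e^{2f}g$ with $f=-u$, your path $g_\mu=e^{2(1-\mu)u}h$ is literally the paper's path $g_\mu=e^{2\mu f}g$, just expressed relative to the round metric instead of $g$.
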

The interest in such deformations came from  the idea of using the continuity method to find an isometric embedding of  $(S^2,g)$ as a convex surface in
$\re^3$ (compare \cite{NIRENBERG53}). His  proof is an application of the celebrated  Riemann's Uniformization Theorem. It follows from uniformization that there exists a constant curvature metric $\overline{g}$ in the conformal class of $g$. If $\overline{g} = e^{2f}g$, then it is easy to check that
$g_\mu = e^{2\mu f}g$ has positive scalar curvature for every $\mu \in [0,1]$. The space of metrics of positive scalar curvature on $S^2$ is in fact contractible, as verified by J. Rosenberg and S. Stolz in \cite{ROSENBERGSTOLZ01}.  

 It is then natural to  look for analogues of the above result in higher dimensions. Of course  there is no uniformization theorem 
 available in general, hence other tools have to be introduced.  We will  explain in Section  \ref{conformal} of this paper
 that  H. Weyl's argument  extends to dimensions greater than two, provided the metrics are in the same conformal class.  

The object of this paper will be to prove that the moduli space  of metrics with positive scalar curvature of an orientable 
compact 3-manifold   is path-connected. If $M$ is a compact manifold, we will denote by $\mathcal{R}_+(M)$ the set of Riemannian metrics $g$ on $M$ with positive scalar curvature $R_g$. The associated moduli space is the quotient $\mathcal{R}_+(M)/{\rm Diff}(M)$ of $\mathcal{R}_+(M)$ under the standard  action of the group of diffeomorphisms ${\rm Diff}(M)$.  We refer the reader to \cite{ROSENBERG07} for a nice  survey on recent results about the space of metrics of positive scalar curvature on a given smooth manifold. Unless otherwise specified, the space of metrics on a given manifold will be endowed with the $C^\infty$ topology.

It will be convenient to call  the positive scalar curvature metrics $g$ and $g'$ {\it isotopic} to each other
if there exists a continuous path $\mu \in [0,1] \g g_\mu \in \mathcal{R}_+(M)$ such that $g_0=g$ and $g_1=g'$, i.e., if $g$ and $g'$ lie in the same path-connected component of $\mathcal{R}_+(M)$.

Our main theorem is:
\begin{mainthm}\label{general.thm}
Suppose that $M^3$ is a compact orientable 3-manifold such that $\mathcal{R}_+(M) \neq \emptyset$. 
 Then the moduli space $\mathcal{R}_+(M) / {\rm Diff}(M)$ is path-connected. 
\end{mainthm}

In \cite{CERF68}, J. Cerf proved that the set ${\rm Diff}_+(S^3)$ of orientation-preserving diffeomorphisms of the 3-sphere is path-connected.  For this reason the statement for $S^3$ is stronger:
\begin{cor}\label{sphere}
 The space $\mathcal{R}_+(S^3)$ of positive scalar curvature metrics on the 3-sphere is path-connected.
\end{cor}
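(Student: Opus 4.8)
The plan is to deduce Corollary~\ref{sphere} from the Main Theorem together with Cerf's theorem \cite{CERF68}; all the content lies in upgrading ``path-connected modulo the action of $\mathrm{Diff}(S^3)$'' to honest path-connectedness of $\mathcal{R}_+(S^3)$. Fix once and for all a round metric $g_*$ on $S^3$.

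The two elementary ingredients I would isolate first are: (i) since scalar curvature is a diffeomorphism invariant, $\mathrm{Diff}(S^3)$ acts on $\mathcal{R}_+(S^3)$, and if $\mu\mapsto\phi_\mu$ is a continuous path in $\mathrm{Diff}(S^3)$ then $\mu\mapsto\phi_\mu^*g$ is a continuous path in $\mathcal{R}_+(S^3)$ for every $g\in\mathcal{R}_+(S^3)$; and (ii) the isometry group of $g_*$ is $O(4)$, which contains orientation-reversing diffeomorphisms. From the Main Theorem I would then extract the sharper assertion that every $g\in\mathcal{R}_+(S^3)$ is isotopic to $\phi^*g_*$ for some $\phi\in\mathrm{Diff}(S^3)$: path-connectedness of $\mathcal{R}_+(S^3)/\mathrm{Diff}(S^3)$ yields a path from $[g]$ to $[g_*]$ in the quotient, which lifts to a path in $\mathcal{R}_+(S^3)$ from $g$ to a point of the orbit $\mathrm{Diff}(S^3)\cdot g_*$, using that $\mathcal{R}_+(S^3)$ is open in a Fr\'echet space (so locally path-connected) and that the $\mathrm{Diff}(S^3)$-action on metrics is well behaved enough for its orbit map to lift paths (local slices, compact stabilizers). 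Alternatively, this refined statement is exactly what the proof of the Main Theorem produces when specialized to $S^3$, where one deforms an arbitrary positive scalar curvature metric, modulo diffeomorphism and isotopy, to a round one.

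With this in hand I would finish as follows. Given $g\in\mathcal{R}_+(S^3)$, write $g$ isotopic to $\phi^*g_*$. If $\phi$ is orientation-reversing, replace it by $\rho^{-1}\circ\phi$, where $\rho\in O(4)$ is any orientation-reversing isometry of $g_*$; then $(\rho^{-1}\circ\phi)^*g_*=\phi^*(\rho^{-1})^*g_*=\phi^*g_*$, while $\rho^{-1}\circ\phi$ is orientation-preserving. So we may assume $\phi\in\mathrm{Diff}_+(S^3)$. By Cerf's theorem \cite{CERF68}, $\mathrm{Diff}_+(S^3)$ is path-connected, so there is a path $\mu\mapsto\phi_\mu$ in $\mathrm{Diff}_+(S^3)$ with $\phi_0=\mathrm{id}$ and $\phi_1=\phi$; by (i), $\mu\mapsto\phi_\mu^*g_*$ joins $g_*$ to $\phi^*g_*$ inside $\mathcal{R}_+(S^3)$. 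Concatenating with the isotopy from $g$ to $\phi^*g_*$ shows that $g$ is isotopic to $g_*$. Since $g$ was arbitrary, all metrics in $\mathcal{R}_+(S^3)$ are isotopic to the single metric $g_*$, so $\mathcal{R}_+(S^3)$ is path-connected.

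The one step that is not purely formal is the extraction of the sharper statement from the Main Theorem, i.e. lifting the connecting path out of the moduli space; this is where one must know the $\mathrm{Diff}(S^3)$-action on metrics is well behaved (or else appeal to the constructive output of the Main Theorem's proof), and I expect it to be the main obstacle. The genuinely $S^3$-specific input is twofold and soft: Cerf's theorem, which lets an orientation-preserving leftover diffeomorphism be cancelled by an isotopy, and the orientation-reversing symmetry of the round hub metric, which reduces the general case to the orientation-preserving one.
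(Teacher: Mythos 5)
Your proposal is correct and is essentially the paper's intended deduction: the paper gives no separate written proof of the corollary, but the proof of the Main Theorem shows that every $g\in\mathcal{R}_+(S^3)$ is isotopic within $\mathcal{R}_+(S^3)$ to a metric isometric to the round metric, and Cerf's theorem together with an orientation-reversing isometry in $O(4)$ removes the residual diffeomorphism, exactly as you argue. The one caution is to rely on the constructive output of the Main Theorem's proof (your second route) rather than on lifting a path from the moduli space, since the latter would need extra machinery (Ebin's slice theorem and path lifting for the $\mathrm{Diff}$-quotient) that the paper never invokes.
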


{\it Remark:} We refer the reader to  \cite{SMALE59} and  \cite{HATCHER83} for results on the homotopy type of ${\rm Diff}_+(S^2)$ and 
${\rm Diff}_+(S^3)$, respectively. The path-connectedness of ${\rm Diff}_+(S^2)$ was also proved in \cite{MUNKRES60}.

The picture in higher dimensions is quite different. This was first noticed by N. Hitchin (\cite{HITCHIN74}) in 1974, where he  proves that the spaces $\mathcal{R}_+(S^{8k})$ and $\mathcal{R}_+(S^{8k+1})$ are disconnected for each $k \geq 1$. This result follows from the consideration
of index-theoretic invariants associated to the Dirac operator of spin geometry. It holds in general for all the compact spin manifolds $X$ of
dimensions $8k$ and $8k+1$ with $\mathcal{R}_+(X) \neq \emptyset$. In 1988 R. Carr (\cite{CARR88}) proved that the space $\mathcal{R}_+(S^{4k-1})$ has infinitely many connected components for each $k \geq 2$. In dimension 7 ($k=2$ case) 
this result was 
proved earlier by Gromov and Lawson in 1983 (see Theorem 4.47 of \cite{GROMOVLAWSON83}). It was improved by
M. Kreck and S. Stolz (\cite{KRECKSTOLZ93}) in 1993, where they show  that even the moduli space $\mathcal{R}_+(S^{4k-1})/{\rm Diff}(S^{4k-1})$ has infinitely many
connected components for $k \geq 2$. The same statement holds true for any nontrivial spherical quotient of dimension greater than or equal to five, as proved
by  B. Botvinnik and P. Gilkey in \cite{BOTVINNIKGILKEY96}.   The surgery arguments used in these proofs break down in the three-dimensional case.

In his famous 1982 paper R. Hamilton (\cite{HAMILTON82}) introduced the equation
$$
\frac{\d g}{\d t} = -2 Ric_g,
$$
known as the Ricci flow, and proved the existence of short time solutions with arbitrary compact Riemannian manifolds as initial conditions. He also proved that if $g(t)$ denotes a solution to the Ricci flow on a compact 3-manifold $M$ such that $g(0)$ has positive Ricci curvature, then the flow becomes extinct at finite time $T>0$, $Ric_{g(t)}>0$ for all $t \in [0,T)$, and  the volume one rescalings $\tilde{g}(t)$ of $g(t)$ converge to a constant curvature metric as $t \g T$.
 
The evolution equation for the scalar curvature is
\begin{equation}\label{ricci.flow}
\frac{\d R_g}{\d t} = \Delta R_g + 2|Ric_g|^2,
\end{equation}
from which follows by Maximum Principle that the condition of positive scalar curvature is preserved by Ricci flow in any dimension $n$.  In fact, if $R_{g(0)}\geq R_0 >0$, it follows from equation (\ref{ricci.flow}) that
$$
\min_M R_{g(t)} \geq \frac{1}{\frac{1}{R_0}-\frac{2}{n}t},
$$
which forces the flow to end in finite time. These facts  make the Ricci flow a 
natural tool in the study of deformations of metrics with positive scalar curvature.

The great difficulty is that the condition of $R_g>0$ is too weak to imply convergence results. Unlike in the case of positive Ricci curvature, singularities can occur in proper subsets of the manifold. In order to deal with this kind of situation Hamilton introduced in \cite{HAMILTON97}, in the context of four-manifolds, a discontinuous 
evolution process known as Ricci flow with surgery. This is a collection of successive standard Ricci flows, each of them defined in the maximal
interval of existence, such that the initial condition of each flow is obtained from the preceding flow by topological and geometrical operations at
the singular time. These operations constitute what is known as surgery, and  are devised to eliminate the regions of the manifold where singularities develop,  replacing them by regions of standard geometry. 

In two dimensions no surgery is needed. In \cite{HAMILTON88}, Hamilton proved that if $g$ has positive scalar curvature (or Gauss curvature)
on $S^2$, then the solution to the normalized Ricci flow with initial condition $(S^2,g)$ converges to a constant curvature metric.  (See \cite{CHOW91} for an extension to arbitrary $g$). His proof was made independent of uniformization by Chen, Lu and Tian in \cite{CHENLUTIAN06}. This is a 
heat flow proof of Weyl's theorem.

In three dimensions the existence of a Ricci flow with surgery and the study of its properties 
were accomplished  by G. Perelman in a series of three papers \cite{PERELMAN02}, \cite{PERELMAN03A}, \cite{PERELMAN03B}. One of Perelman's breakthroughs was the understanding of how singularities form, which allowed him to restrict the surgeries
to almost cylindrical regions. When the Ricci flow with surgery ends in finite time, it is possible, by reasoning back in time, to recover the original topology of the manifold. In fact  he proves that if the Ricci flow with surgery of an orientable compact Riemannian 3-manifold becomes extinct in finite time, then the manifold
is diffeomorphic to a connected sum of spherical space forms and finitely many copies of $S^2 \times S^1$. Since he is also able to prove 
(see \cite{PERELMAN03B}) that
this is in fact the case if the fundamental group is trivial (or a free product of finite and infinite cyclic groups, more generally), a proof
of the  Poincaré Conjecture is obtained as an  application.  A different argument for the finite extinction time result is 
due to T. Colding and B. Minicozzi (see \cite{COLDINGMINICOZZI05}).

Another application is the topological classification of the orientable compact 3-manifolds which admit metrics of positive scalar curvature (see 
\cite{SCHOENYAU79A} and \cite{GROMOVLAWSON83} for earlier results with different methods). Since the surgeries only increase scalar curvature, the associated Ricci flows with surgery have to become extinct in finite time. We also know that the 
condition of positive scalar curvature is stable under connected sums (see \cite{GROMOVLAWSON80} and \cite{SCHOENYAU79}).  Therefore the assumption of the 
Main Theorem is equivalent to saying that $M$ is diffeomorphic to a connected sum of spherical space forms and finitely many copies of 
$S^2 \times S^1$. 

Our method of proof is going to be a combination of the heat flow technique (Ricci flow with surgery), and the conformal method. The work of Perelman on Hamilton's Ricci flow is fundamental (\cite{PERELMAN02}, \cite{PERELMAN03A}, and  \cite{PERELMAN03B}). We refer the reader to \cite{CAOZHU06}, \cite{KLEINERLOTT08}, and \cite{MORGANTIAN07} for some detailed presentations of the arguments due to Perelman. See also \cite{BESSONetal08}, \cite{BESSONetal09} and \cite{MORGANTIAN08}. In this paper
we choose to  follow more closely the exposition of the book by J. Morgan and G. Tian (see \cite{MORGANTIAN07}).  

Since we are not only interested in the topology, but also in the geometry, we need something to undo surgeries in a certain sense. We will achieve that by means of the connected sum construction of 
Gromov and Lawson of \cite{GROMOVLAWSON80}(see \cite{SCHOENYAU79} for a related construction). Recall that the Gromov-Lawson connected sum construction
is a way of putting a metric of positive scalar curvature on the connected sum $(M_1,g_1) \# (M_2,g_2)$, provided $g_1$ and $g_2$ have positive
scalar curvature themselves. The resulting manifold is a disjoint union of the complements $M_i \setminus B_\delta(p_i)$ of small balls, $i=1,2$,
with their original metrics, and a neck region $N$.

In order to explain our strategy let us introduce the concept of a canonical metric. 
Let $h$ be the metric on the unit sphere $S^3$ induced by the standard inclusion $S^3 \subset \re^4$. A {\it canonical metric}  is any metric
obtained from the 3-sphere $(S^3,h)$ by attaching to it finitely many constant curvature spherical quotients (through the Gromov-Lawson procedure), and adding to 
it finitely many handles (Gromov-Lawson connected sums of $S^3$ to itself). The resulting manifold $M$ is diffeomorphic to
$$
S^3 \# (S^3/\Gamma_1) \# \dots \# (S^3/\Gamma_k) \# (S^2 \times S^1)
\# \dots \# (S^2 \times S^1),
$$
where $\Gamma_1,\dots,\Gamma_k$ are finite subgroups of $SO(4)$ acting freely on $S^3$. The number of $S^2 \times S^1$ summands coincides
with the number of handles attached, and the spherical quotients come with a choice of orientation. The resulting metric $\hat{g}$ is locally conformally flat and has positive scalar curvature. Two canonical 
metrics on $M$ are in the same path-connected component of the moduli space $\mathcal{R}_+(M)/{\rm Diff}(M)$.

\begin{figure}\label{canonicalmetric}
\begin{center}
\input 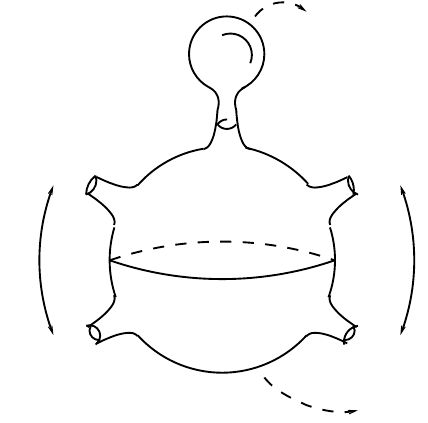_t
\caption{A canonical metric on $S^3/\Gamma \# (S^2\times S^1)\# (S^2\times S^1)$.}
\end{center}
\end{figure}

Given a metric $g_0$ in $\mathcal{R}_+(M)$, the strategy is to use the Ricci flow with surgery $(M^3_i,g_i(t))_{t \in [t_{i},t_{i+1})}$ with initial condition $g_0(0)=g_0$ to construct a continuous path in $\mathcal{R}_+(M)$ that starts at $g_0$ and ends at a canonical metric.  As in the proof of the Poincar\'{e} Conjecture  we use backwards induction on
the set of singular times $t_i$. We will give an outline of the proof in Section \ref{outline}.

We also give some applications to General Relativity in Section \ref{relativity}. We are interested in studying the topology of spaces 
of asymptotically flat metrics under natural scalar curvature conditions. For simplicity we restrict ourselves to $\re^3$, although the methods
can also be applied to other manifolds (see final remark). In Section \ref{relativity} we will always use
the topology induced by weighted H\"{o}lder norms $C^{k,\alpha}_\beta$.

We will prove that three different spaces are path-connected (Theorems \ref{connectedness.1}, \ref{connectedness.2}, and \ref{connectedness.3}). A metric $g$ on $\re^3$ will be called  {\it asymptotically flat}  if
 $g_{ij}-\delta_{ij} \in C^{2,\alpha}_{-1}$. In particular
 $$
 |g_{ij}-\delta_{ij}|(x) + |x||\d\, g_{ij}|(x) + |x|^2|\d^2 g_{ij}|(x) = O(1/|x|)
 $$
 as $x \g \infty$.
 
Let $\mathcal{M}_1$ be the set of asymptotically flat metrics on $\re^3$ of zero scalar curvature. The first application is:
\begin{thm}\label{connected.1}
The set $\mathcal{M}_{1}$ is path-connected in the $C^{2,\alpha}_{-1}$ topology.
\end{thm}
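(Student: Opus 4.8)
The plan is to reduce the statement to the path-connectedness of $\mathcal{R}_+(S^3)$ (Corollary \ref{sphere}) via a one-point conformal compactification. Both directions of this correspondence are classical features of the conformal method, which I would exploit as follows. Write $L_{\hat g} = -8\Delta_{\hat g} + R_{\hat g}$ for the conformal Laplacian. I would first define a \emph{blow-down map} $\Psi\colon \mathcal{R}_+(S^3) \g \mathcal{M}_1$, show that it is continuous and essentially surjective, and then transport a path in $\mathcal{R}_+(S^3)$ through $\Psi$.

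\emph{Construction of $\Psi$.} Fix a point $p_\infty \in S^3$ and an identification $S^3 \setminus \{p_\infty\} \cong \re^3$. Given $\hat g \in \mathcal{R}_+(S^3)$, positivity of $R_{\hat g}$ makes $L_{\hat g}$ an invertible positive operator, so it has a positive Green's function $G$ with pole at $p_\infty$, with $G \sim c'\, d_{\hat g}(p_\infty,\cdot)^{-1}$ near $p_\infty$. Set $\Psi(\hat g) := \lambda\, G^4 \hat g$ on $\re^3 = S^3 \setminus \{p_\infty\}$; then $R_{\Psi(\hat g)} = \lambda^{-5} G^{-5} L_{\hat g} G = 0$ off $p_\infty$, and the metric is asymptotically flat with its single end at $p_\infty$, the scale $\lambda$ being fixed by the requirement that it tend to $\delta$ at infinity. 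The expansion of $G$ near $p_\infty$ gives $\Psi(\hat g)_{ij} - \delta_{ij} \in C^{2,\alpha}_{-1}$, so $\Psi(\hat g) \in \mathcal{M}_1$. Two further facts I would record: $\Psi$ is continuous from the $C^\infty$ topology to the $C^{2,\alpha}_{-1}$ topology, since $L_{\hat g}^{-1}$, hence the regular part of $G$, depends continuously on $\hat g$; and $\Psi$ carries the round metric $h$ to the Euclidean metric $\delta$, by the explicit stereographic computation (in a stereographic chart $h = ((1+|x|^2)/2)^{-2}\delta$ and its Green's function at $p_\infty$ is a multiple of $((1+|x|^2)/2)^{1/2}$).

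\emph{Essential surjectivity of $\Psi$.} This is the heart of the matter. Given $g \in \mathcal{M}_1$, invert about infinity: choose a smooth $\rho > 0$ on $\re^3$ with $\rho(x) = |x|^{-1}$ for large $|x|$; then $\tilde g := \rho^4 g$ extends over the added point $p_\infty$ to a metric on $S^3$ conformal to $g$ on $\re^3$. The decay $g_{ij}-\delta_{ij}\in C^{2,\alpha}_{-1}$ is exactly what is needed for $\tilde g$ to have the regularity required below (it is smooth away from $p_\infty$, the leading obstruction to smoothness at $p_\infty$ being the ADM mass). The conformal factor $w := \rho^{-1}$, with $g = w^4 \tilde g$, solves $L_{\tilde g}w = R_g\, w^5 = 0$ on $S^3\setminus\{p_\infty\}$ and blows up like $c'\, d_{\tilde g}(p_\infty,\cdot)^{-1}$, $c'>0$, at $p_\infty$; hence $L_{\tilde g}w = c\,\delta_{p_\infty}$ with $c>0$ in the distributional sense. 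Pairing this against the first eigenfunction $\varphi_1 > 0$ of $L_{\tilde g}$ and using self-adjointness gives $\lambda_1(L_{\tilde g})\int_{S^3} w\,\varphi_1\,dV_{\tilde g} = c\,\varphi_1(p_\infty) > 0$, so $\lambda_1(L_{\tilde g}) > 0$ and the Yamabe invariant of $[\tilde g]$ is positive. Therefore $[\tilde g]$ contains a positive scalar curvature metric $\hat g$ on $S^3$ (smooth, after a routine regularization), and the conformal transformation law for Green's functions of $L$, together with the normalization of $\lambda$, gives $\Psi(\hat g) = g$ up to a short deformation inside $\mathcal{M}_1$ obtained by solving the Lichnerowicz equation $L_{\gamma_t}\phi_t = 0$, $\phi_t \to 1$, along the segment $\gamma_t$ joining $g$ and $\Psi(\hat g)$ (the operator $L_{\gamma_t}$ stays positive on asymptotically flat functions since $R_{\gamma_t}$ is small and decaying).

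\emph{Conclusion and main obstacle.} Given $g \in \mathcal{M}_1$, pick $\hat g \in \mathcal{R}_+(S^3)$ with $\Psi(\hat g)$ isotopic in $\mathcal{M}_1$ to $g$; by Corollary \ref{sphere} there is a continuous path $s \in [0,1]\g \hat g_s \in \mathcal{R}_+(S^3)$ with $\hat g_0 = h$ and $\hat g_1 = \hat g$; then $s \g \Psi(\hat g_s)$ is a continuous path in $\mathcal{M}_1$ from $\Psi(h)=\delta$ to $\Psi(\hat g)$, and thence to $g$. Since every metric in $\mathcal{M}_1$ is joined to $\delta$, the space is path-connected. The step I expect to cost the most work is the compactification: controlling the regularity of $\tilde g$ at $p_\infty$ (which dictates the choice of the $C^{2,\alpha}_{-1}$ norm), making the eigenfunction argument for positivity of the Yamabe invariant rigorous in that limited regularity, arranging the positive scalar curvature representative $\hat g$ to be smooth, and verifying the continuity of $\Psi$ in the weighted H\"older topology. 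Everything else is bookkeeping with the conformal method and an appeal to Corollary \ref{sphere}.
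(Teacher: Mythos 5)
Your overall strategy---conformally compactify at one point, reduce to the path-connectedness of $\mathcal{R}_+(S^3)$ (Corollary \ref{sphere}), and blow back down via Green's functions of the conformal Laplacian---is the same as the paper's. The genuine gap is precisely in the step you flag as ``the heart of the matter.'' Your claim that the decay $g_{ij}-\delta_{ij}\in C^{2,\alpha}_{-1}$ is ``exactly what is needed'' for $\tilde g=\rho^4 g$ to extend with adequate regularity over $p_\infty$ is not correct: a $1/|x|$ deviation from $\delta$ (take the spatial Schwarzschild metric $(1+m/2|x|)^4\delta$, which lies in $\mathcal{M}_1$) produces after inversion a metric whose components behave like $1+2m|z|+\dots$, i.e.\ merely Lipschitz at $p_\infty$, with second derivatives blowing up like $|z|^{-1}$. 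With that regularity you cannot invoke Corollary \ref{sphere} (a statement about smooth metrics in the $C^\infty$ topology), and the eigenfunction pairing, the Green's function expansion, and the selection of a smooth positive scalar curvature representative all take place in exactly this problematic regime. The phrases ``routine regularization'' and ``a short deformation inside $\mathcal{M}_1$ obtained by solving the Lichnerowicz equation'' are where the real work hides: smoothing $\tilde g$ near $p_\infty$ amounts to changing $g$ outside a large compact set, and you must then produce a path in $\mathcal{M}_1$ (with the correct weighted decay, depending continuously on the parameter) reconnecting the modified metric to $g$. That is not bookkeeping; it is the content of the paper's Proposition \ref{deforming.infinity}, which your proposal in effect assumes.

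The paper resolves this by deforming \emph{before} compactifying: cut $g$ off to $\delta$ outside a large ball, smooth, and conformally correct back to zero scalar curvature by solving $L_{g_{R,\mu}}v_{R,\mu}=\frac18 R_{g_{R,\mu}}$, using the isomorphism $L:E^{2,\alpha}_{-1}\to D^{0,\alpha}_{-3}$ of Theorem \ref{fredholm.alternative} and the fact that for $R$ large the whole segment of cut-off metrics stays close to $g$. This yields a path in $\mathcal{M}_1$ to a metric which is exactly conformally flat outside a compact set; for such metrics the compactification of Proposition \ref{compactification.prop} can be arranged to be genuinely smooth (indeed flat near $p$, with all the singular behavior carried by the Green's function), after which your positivity-of-Yamabe argument and the blow-down of a path of compactified metrics go through. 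Two smaller points you gloss over, which the paper treats explicitly: with a fixed identification $S^3\setminus\{p_\infty\}\cong\re^3$, the blow-down of $\hat g_s$ is asymptotic to the constant matrix $\hat g_s(p_\infty)$ rather than to $\delta$, so the charts at infinity must be adapted to the varying metric (Lemma \ref{extending.diffeos}, via inverted normal coordinates); and the resulting identification differs from the original one by a diffeomorphism of $\re^3$ equal to the identity near infinity, which is reconnected to the identity using Cerf's theorem. Supplying an argument equivalent to Proposition \ref{deforming.infinity} would turn your outline into a proof; as written, the key analytic step is asserted rather than proved.
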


The idea is to first deform a metric in $\mathcal{M}_1$ into one that can be conformally compactified, i.e., one that can be obtained as a blow-up 
$G_x^4\, g$ of a positive
scalar curvature metric $g$ on $S^3$. Here $G_x$ denotes   the Green's function associated to the conformal Laplacian
$L_g=\Delta_g-\frac18 R_g$ of $g$, with pole at $x \in S^3$. By deforming $g$, the Corollary \ref{sphere} can be used to construct a continuous path of
asymptotically flat and scalar-flat metrics on $\re^3$ connecting $G_x^4\, g$ to the flat metric. 

As a consequence  we prove
\begin{thm}\label{connected.2}
Let $\mathcal{M}_2$ be the set of asymptotically flat metrics $g$ on $\re^3$ such that $R_g \geq 0$, and $R_g \in L^1$. 
Then the  set $\mathcal{M}_{2}$ is path-connected in the $C^{2,\alpha}_{-1}$ topology.
\end{thm}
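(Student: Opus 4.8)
The plan is to reduce everything to Theorem \ref{connected.1} by the conformal method. First observe that a scalar-flat metric automatically has $R_g\geq 0$ and $R_g\equiv 0\in L^1$, so $\mathcal{M}_1\subseteq\mathcal{M}_2$, and by Theorem \ref{connected.1} every element of $\mathcal{M}_1$ is joined to the flat metric $\delta$ by a path lying in $\mathcal{M}_1\subseteq\mathcal{M}_2$. Hence it is enough to show that an arbitrary $g\in\mathcal{M}_2$ can be connected, within $\mathcal{M}_2$, to some asymptotically flat scalar-flat metric. I would produce that metric by a conformal change and then reach it along an explicit affine homotopy of conformal factors.

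So the first step is to solve the conformal equation. Since $g_{ij}-\delta_{ij}\in C^{2,\alpha}_{-1}$ one has $R_g\in C^{0,\alpha}_{-3}$, and the standard linear theory for the Laplacian on asymptotically flat $3$-manifolds produces a solution $u$ of
$$
\Delta_g u=\tfrac18 R_g\,u,\qquad u\to 1\ \text{at infinity}.
$$
The hypothesis $R_g\geq 0$ is exactly what makes the operator $L_g=\Delta_g-\frac18 R_g$ injective on functions decaying at infinity (multiply $u-1$ by $L_g(u-1)$ and integrate, discarding the boundary term at infinity), hence invertible; and once $u$ exists, the maximum principle gives $0<u\leq 1$, with $u\equiv 1$ precisely when $R_g\equiv 0$. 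The assumption $R_g\in L^1$ is used to get the sharp decay $u-1\in C^{2,\alpha}_{-1}$ (the Green's-function leading term $\sim|x|^{-1}$ is controlled by $\int R_g\,dV_g<\infty$). Since $u-1\in C^{2,\alpha}_{-1}$, the metric $u^4 g$ is again asymptotically flat, and by the transformation law $R_{u^4 g}=u^{-5}(-8\Delta_g u+R_g u)=0$; thus $u^4 g\in\mathcal{M}_1$.

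The second step is the homotopy. For $t\in[0,1]$ put $v_t=(1-t)+tu$, so $v_0\equiv 1$, $v_1=u$, and $\inf u\leq v_t\leq 1$ (using $0<u\leq 1$), so each $v_t^4 g$ is a genuine metric; it is asymptotically flat and $t\mapsto v_t^4 g$ is continuous in the $C^{2,\alpha}_{-1}$ topology because $v_t^4-1$ is a polynomial in $t$ with coefficients in $C^{2,\alpha}_{-1}$. Using $\Delta_g v_t=t\Delta_g u=\frac18 t R_g u$, the transformation law gives
$$
R_{v_t^4 g}=v_t^{-5}\big(-8\Delta_g v_t+R_g v_t\big)=v_t^{-5}\big(-tR_g u+R_g((1-t)+tu)\big)=(1-t)\,v_t^{-5}\,R_g\geq 0 .
$$
Moreover $dV_{v_t^4 g}=v_t^{6}\,dV_g$ and $v_t\leq 1$, so
$$
\int_{\re^3}R_{v_t^4 g}\,dV_{v_t^4 g}=(1-t)\int_{\re^3}v_t\,R_g\,dV_g\leq\int_{\re^3}R_g\,dV_g<\infty,
$$
hence $R_{v_t^4 g}\in L^1$. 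Therefore $t\mapsto v_t^4 g$ is a path in $\mathcal{M}_2$ from $g$ to $u^4 g\in\mathcal{M}_1$; concatenating it with the path inside $\mathcal{M}_1$ supplied by Theorem \ref{connected.1} joins $g$ to $\delta$ inside $\mathcal{M}_2$, which proves the theorem.

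The substantive part is the one I have only sketched: carrying out the weighted linear analysis so that the conformal factor $u$ is obtained with decay exactly $u-1\in C^{2,\alpha}_{-1}$ (which is what certifies that $u^4 g$ and all the $v_t^4 g$ really belong to the prescribed space), together with the strict positivity $u>0$. Once $u$ is in hand, the affine interpolation $v_t=(1-t)+tu$ between the conformal factors $1$ and $u$ makes the remaining verifications a short computation, essentially because, up to the $v_t^{-5}$ prefactor, the scalar curvature is a linear expression in the conformal factor; this is also where the $L^1$ hypothesis is used a second time, to keep $R_{v_t^4 g}$ integrable along the path.
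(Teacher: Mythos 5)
Your argument is correct and is essentially the paper's own proof: solve $L_g u=\Delta_g u-\frac18 R_g u=0$ with $u\to 1$ via the weighted isomorphism $L_g:E^{2,\alpha}_{-1}\to D^{0,\alpha}_{-3}$ (Theorem \ref{fredholm.alternative}, where $R_g\geq 0$ gives injectivity and $R_g\in L^1$ puts $\frac18 R_g$ in $D^{0,\alpha}_{-3}$), get $u>0$ by the maximum principle, and use the affine path of conformal factors $(1-\mu)+\mu u$ to land in $\mathcal{M}_1$ and then invoke Theorem \ref{connectedness.1}. The only difference is that you verify explicitly that the interpolated metrics stay in $\mathcal{M}_2$, a computation the paper leaves implicit.
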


This question had been studied previously by B. Smith and G. Weinstein through a parabolic method. In \cite{SMITHWEINSTEIN04}, the authors proved path-connectedness
of the space of metrics in $\mathcal{M}_2$ that admit
 a quasi-convex global foliation. Once we  have  established Theorem \ref{connected.1}, Theorem \ref{connected.2} will follow by the conformal method.

The final application concerns trace-free asymptotically flat solutions to the vacuum Einstein constraint equations
on $\re^3$.   The solutions to
the constraint equations parametrize the space of solutions to the vacuum Einstein equations because of the well-posedness
of the initial value formulation, as proved by Y. Choquet-Bruhat. We refer the reader to \cite{BARTNIKISENBERG04} for a nice survey on the constraint equations.

We say that $(g,h)$ is an {\it asymptotically flat initial data set} on $\re^3$ if $g$ is a Riemannian metric on $\re^3$ such that
 $g_{ij}-\delta_{ij} \in C^{2,\alpha}_{-1}$, and $h$ is a symmetric $(0,2)$-tensor with $h_{ij} \in C^{1,\alpha}_{-2}$.
 
 Let $\mathcal{M}_3$ be the set of all asymptotically flat initial data sets $(g,h)$ on $\re^3$ such that
 \begin{itemize}
 \item [a)] $tr_g\, h = 0$,
 \item [b)] $R_g = |h|^2$,
 \item [c)] and $(div_g\, h)_j:=\nabla _i h^{i}_{\ j} = 0$.
 \end{itemize}
 
 The theorem is:
 \begin{thm}\label{connected.3}
 The set $\mathcal{M}_3$ is path-connected in the $C^{2,\alpha}_{-1} \times C^{1,\alpha}_{-2}$-topology.
 \end{thm}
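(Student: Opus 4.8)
The plan is to use the conformal method of Lichnerowicz and York to reduce Theorem \ref{connected.3} to Theorem \ref{connected.1}. If $g \in \mathcal{M}_1$ then $(g,0) \in \mathcal{M}_3$, so $\mathcal{M}_1$ embeds in $\mathcal{M}_3$ by $g \mapsto (g,0)$, and by Theorem \ref{connected.1} the image of this embedding is path-connected and contains the base point $(\delta,0)$, where $\delta$ is the Euclidean metric. Hence it suffices to join an arbitrary $(g,h) \in \mathcal{M}_3$ to a pair $(g_0,0)$ with $g_0 \in \mathcal{M}_1$.

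Fix $(g,h) \in \mathcal{M}_3$. Since $h$ is trace-free and divergence-free with respect to $g$, so is $sh$ for every $s \in [0,1]$, and for each $s$ I would solve the Lichnerowicz equation
\begin{equation*}
8\Delta_g \phi_s - R_g\,\phi_s + s^2\,|h|_g^2\,\phi_s^{-7} = 0, \qquad \phi_s > 0, \qquad \phi_s - 1 \in C^{2,\alpha}_{-1},
\end{equation*}
and set $(g_s,h_s) := (\phi_s^4 g,\ s\,\phi_s^{-2} h)$. By the conformal covariance of the constraint operators in dimension three one has $\mathrm{tr}_{g_s} h_s = s\phi_s^{-6}\mathrm{tr}_g h = 0$, $(\mathrm{div}_{g_s} h_s)_j = s\phi_s^{-6}(\mathrm{div}_g h)_j = 0$, and, using $R_{g_s} = -8\phi_s^{-5} L_g \phi_s = \phi_s^{-5}(-8\Delta_g\phi_s + R_g\phi_s)$ together with the equation, $R_{g_s} = s^2\phi_s^{-12}|h|_g^2 = |h_s|_{g_s}^2$; together with $\phi_s - 1 \in C^{2,\alpha}_{-1}$ this gives $(g_s,h_s) \in \mathcal{M}_3$. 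At $s = 1$ the hypothesis $R_g = |h|_g^2$ makes $\phi \equiv 1$ a solution, so by uniqueness $\phi_1 \equiv 1$ and $(g_1,h_1) = (g,h)$; at $s = 0$ the equation becomes the linear problem $L_g\phi_0 = 0$, $\phi_0 \to 1$, whose positive solution gives $R_{g_0} = -8\phi_0^{-5}L_g\phi_0 = 0$, hence $g_0 = \phi_0^4 g \in \mathcal{M}_1$. Continuous dependence of $\phi_s$ on $s$ in $C^{2,\alpha}_{-1}$ makes $s \mapsto (g_s,h_s)$ a path in $\mathcal{M}_3$ from $(g,h)$ to $(g_0,0)$; concatenating with the lifted path from Theorem \ref{connected.1} joins $(g,h)$ to $(\delta,0)$, and the theorem follows.

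Note that the $s = 0$ problem is solvable precisely because $g$ automatically lies in $\mathcal{M}_2$: since $h \in C^{1,\alpha}_{-2}$ we have $R_g = |h|_g^2 = O(|x|^{-4})$, so $R_g \geq 0$ and $R_g \in L^1(\re^3)$, and the sign $R_g \geq 0$ makes $L_g = \Delta_g - \frac18 R_g : C^{2,\alpha}_{-1} \to C^{0,\alpha}_{-3}$ an isomorphism and forces $0 < \phi_0 \leq 1$ by the maximum principle — this is the conformal deformation already used to deduce Theorem \ref{connected.2} from Theorem \ref{connected.1}.

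The main obstacle is the analytic theory of the Lichnerowicz equation in weighted H\"older spaces: existence, uniqueness, and continuous dependence of $\phi_s$. Existence comes from sub- and supersolutions — because $R_g = |h|_g^2$, the constant $1$ is a supersolution for every $s \in [0,1]$ (the excess term is $\frac18|h|_g^2(s^2-1) \leq 0$) and a small constant is a subsolution for $s > 0$, so monotone iteration on exhausting balls yields a solution with $0 < \phi_s \leq 1$ and $\phi_s \to 1$. Uniqueness holds because $\phi \mapsto \frac18(R_g\phi - s^2|h|_g^2\phi^{-7})$ is nondecreasing in $\phi > 0$ when $R_g \geq 0$: the difference $w$ of two solutions satisfies $(\Delta_g - c)w = 0$ with $c \geq 0$ and $w \to 0$, so $w \equiv 0$; this is also what forces $\phi_1 \equiv 1$ and hence makes the homotopy start exactly at $(g,h)$. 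The remaining points — continuous dependence via weighted Schauder estimates and the implicit function theorem, and checking that the source $(1-s^2)|h|_g^2 = O(|x|^{-4})$ keeps $\phi_s - 1$ in $C^{2,\alpha}_{-1}$ — are routine elliptic theory on asymptotically flat manifolds.
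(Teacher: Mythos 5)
Your proposal is correct and is essentially the paper's own argument: the paper packages the same Lichnerowicz-equation construction as a continuous map $F(g,h)=(u^4g,\,u^{-2}h)$ from the auxiliary set $\tilde{\mathcal{M}}_3$ (trace-free, divergence-free, $R_g\geq |h|_g^2$, $R_g\in L^1$) onto $\mathcal{M}_3$ that restricts to the identity on $\mathcal{M}_3$, applied to the straight-line path $(g,(1-\mu)h)$ together with Theorem \ref{connectedness.2}, and your path $s\mapsto(\phi_s^4 g,\, s\phi_s^{-2}h)$ is exactly the $F$-image of that first segment. The ingredients coincide: supersolution $1$ from $R_g\geq s^2|h|_g^2$, a subsolution plus maximum-principle uniqueness (which also forces $\phi_1\equiv 1$), continuity in weighted H\"older spaces, and reduction to the scalar-flat case of Theorem \ref{connectedness.1}.
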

 
 The full set of solutions to the vacuum Einstein constraint equations  is the set $\mathcal{M}_4$ of all asymptotically flat initial data sets $(g,h)$ defined  on $\re^3$ such that
 \begin{itemize}
 \item [a)] $R_g + (tr_g\, h)^2 - |h|^2=0$,
 \item [b)]  $\nabla _i h^{i}_{\ j}-\nabla_j (tr_g\, h) = 0$.
 \end{itemize}
 These metrics  no longer have nonnegative scalar curvature, so it would be  interesting  to find methods to study their deformations.
 
 The paper is organized as follows. In Section 2  we will explain the main steps in the proof of the Main Theorem with a minimum of notation. In Section 3 we explain the conformal method. In Section 4 we prove some interpolation lemmas which will be useful later in handling regions covered by necks. In Section 5 we discuss the surgery process and some of its basic properties. In Section 6 we recall the connected sum construction of manifolds of positive scalar curvature due to Gromov and Lawson, and explain how it can be used to revert surgery. In Section 7 we discuss some of the basic results about
 the Ricci flow with surgery. In Section 8 we introduce the concept of a canonical metric and  give a proof of the main theorem.
In Section 9 we prove the connectedness results concerning asymptotically flat metrics on $\re^3$.

{\it Acknowledgments.} This project started in February of 2008 while I was visiting  Gang Tian at Princeton University. I am 
deeply grateful to  Tian for the support and the many enlightening discussions about the nature of singularity formation during the Ricci flow with surgery. I also thank Andre Neves for the mathematical conversations we had during that time.  I am  thankful to  Richard Schoen for the interest and for suggesting the problem back in 2006.   I would  like to thank Gerard Besson for discussions about the surgery process, and I also thank the anonymous referee for the invaluable suggestions on the exposition. Finally I am grateful to the hospitality of the Institute for Advanced Study, in Princeton, where part of this work was written during  the fall of 2008. I was supported by CNPq-Brazil and FAPERJ.


\section{Outline of the proof of the Main Theorem}\label{outline}

In this section we will  give an outline of  the proof of our main theorem.  For that we will use the terminology  associated with the 
 Ricci flow with surgery (see Section \ref{section.ricci} of this paper).

Let $g_0$ be  a  positive scalar curvature metric on $M^3$. We will consider
$(M^3_i,g_i(t))_{t \in [t_i,t_{i+1})}$ to be  the Ricci flow with surgery  with initial condition $(M^3,g_0)$. 

 The proof is by (backwards) induction on the set of singular times $t_i$. The goal is to prove that every connected component of $M_i$ is isotopic to a canonical metric, for any singular time $t_i$.
 Since the scalar curvature is positive at time $t=0$, and scalar curvature only increases with surgeries, the parabolic maximum principle implies that the flow has  to become extinct
in finite time. This means that there exists $j \geq 0$ such that $M_{j+1}=\emptyset$.

Therefore we should start with the components of $M_j$. Since $M_{j+1}=\emptyset$, the continuing region $C_{t_{j+1}}$ at the extinction time $t_{j+1}$, as defined by Perelman, is empty too. This means that just before the extinction, at time $t'=t_{j+1}-\eta$, for some small $\eta > 0$,  the scalar curvature of
$g_{t'}$ is uniformly and sufficiently large
 so that every point of $(M_j,g_{t'})$ is contained in a canonical neighborhood.

Let us recall that there  are four kinds of canonical  neighborhoods: i) $\varepsilon$-necks; ii) $(C,\varepsilon)$-caps; iii) $C$-components; iv) $\varepsilon$-round components.
An $\varepsilon$-neck centered at $x \in (M^3,g)$ is a submanifold $N\subset M$ and a diffeomorphism $\psi_N:S^2 \times (-1/\varepsilon,1/\varepsilon) \g N$ such that the metric $R_g(x)\psi^*(g)$ is $\varepsilon$-close in the $C^{[\frac{1}{\varepsilon}]}$-topology to the cylindrical
metric $ds^2+d\theta^2$, where $d\theta^2$ denotes the round metric of scalar curvature one on $S^2$.  The number $h_N=R_g(x)^{-1/2}$ is called the scale of the neck. A $(C,\varepsilon)$-cap is a noncompact submanifold $\mathcal{C} \subset M$ diffeomorphic to a 3-ball or to $\re P^3$ minus a ball, with an $\varepsilon$-neck $N \subset \mathcal{C}$ such that $\overline{Y}=\mathcal{C} - N$ is  a compact submanifold with boundary. The boundary $\d \overline{Y}$ of the so-called core $Y$ (interior of $\mathcal{C}-N$)  is required to be the central sphere of some $\varepsilon$-neck in $\mathcal{C}$.  After rescaling to make $R(x)=1$ for some point $x$ in the cap, the diameter, volume, and curvature ratios at any two points are bounded by $C$. A $C$-component is a compact and connected Riemannian manifold $(M^3,g)$ diffeomorphic to $S^3$ or $\re P^3$ , of positive sectional curvature and of bounded geometry controlled by $C$ (after scaling). An $\varepsilon$-round component is a compact and connected Riemannian manifold $(M^3,g)$ such that, after scaling to make $R(x)=1$ for some point $x\in M$, is $\varepsilon$-close in the $C^{[1/\varepsilon]}$-topology to a round metric of scalar curvature one.

It is very important that we can say something more about the geometry of the caps. This kind of information was not needed
in the proof of the Poincar\'{e} Conjecture since the topology of the caps is well-known. We classify the $(C,\varepsilon)$-caps into three types: $A$, $B$ and $C$. The caps of type $A$ have positive sectional curvature everywhere. The caps of type $B$, after scaling,  are $\varepsilon$-close in the $C^{[1/\varepsilon]}$-topology to a fixed metric ball centered at the tip of the standard initial metric. They are diffeomorphic to a 3-ball. Finally, a cap $\mathcal{C}$ of type $C$ comes with a 
double covering $\psi:S^2 \times (-3/\varepsilon-4,3/\varepsilon+4) \g \mathcal{C}$ with $\psi(-\theta,-t)=\psi(\theta,t)$ and such that $h^{-2}\, \psi^*(g)$ is within $\varepsilon$ of $ds^2+d\theta^2$ in the $C^{[1/\varepsilon]}$-topology, where $h=R_g(z)^{-1/2}$ for some $z \in \psi(S^2 \times \{-2/\varepsilon-4\})$. These caps are diffeomorphic to $\re P^3$ minus a ball.

 It follows from Perelman's proof of the
existence of the Ricci flow with surgery that these are the only types of  caps that appear as canonical neighborhoods. We give more details about this fact in Section \ref{section.ricci}.

Another important fact is that when two $\varepsilon$-necks $N$ and $N'$ intersect each other, the ratio $h_{N}/h_{N'}$ between their scales  is very close to 1 (if $\varepsilon$ is sufficiently small) and their product structures almost align. These results are collected in Proposition A.11 of \cite{MORGANTIAN07}, for instance.


The first and key step is to show that any compact orientable 3-manifold $(M,g)$ with the property that every point in $M$ is contained in a canonical neighborhood is isotopic to a canonical metric. This is the content of Proposition \ref{canonical.neighborhood}.

If $(M,g)$ is a $C$-component or an $\varepsilon$-round component, the sectional curvatures are positive. Hence it follows from Hamilton's theorem (\cite{HAMILTON82}) that the normalized Ricci
flow starting at $(M,g)$ has positive sectional curvature and converges to a constant curvature spherical quotient $S^3/\Gamma$. Notice that if $\Gamma$ is nontrivial, a canonical metric on $S^3/\Gamma$, as defined in the introduction, is not round. It is obtained as a Gromov-Lawson connected sum of a round sphere and a round $S^3/\Gamma$. But it is locally conformally flat,
and it follows from the works of Kuiper (\cite{KUIPER49} and \cite{KUIPER50}) that there is a round metric in its conformal class. The conformal method can then be used, as explained in 
Corollary \ref{uniformization}, to connect these two metrics through metrics of positive scalar curvature. The end result is that a $C$-component or an $\varepsilon$-round component 
is always 
isotopic to a canonical metric.

Hence, we can assume that every point $x \in M$ is the center of an $\varepsilon$-neck or is
contained in the core of a $(C,\varepsilon)$-cap. It follows from Propositions A.21 and  A.25 of \cite{MORGANTIAN07} that $M$ is diffeomorphic to $S^3, \re P^3, \re P^3 \# \re P^3,$ or $S^2 \times S^1$. It also follows from their proofs that $M$ is diffeomorphic
to $S^2\times S^1$ if and only if every point $x \in M$ is the center of an $\varepsilon$-neck.

Suppose $M$ is diffeomorphic to $S^3$. The cases in which $M$ is diffeomorphic to $\re P^3$ or $\re P^3 \# \re P^3$ are similar, and the case of $S^2 \times S^1$ will be dealt with later. Hence we  know that $M$ must contain a $(C,\varepsilon)$-cap $\mathcal{C}_1$, which has to be diffeomorphic to a 3-ball.  Let $\psi_{N_1}:S^2 \times (-1/\varepsilon,1/\varepsilon) \g N_1$ be the diffeomorphism associated to the neck $N_1 \subset \mathcal{C}_1$, oriented so that $\psi_{N_1}(\theta,s)$ approaches the boundary $\d \mathcal{C}_1$ as $s \g 1/\varepsilon$. We would like to choose this cap as large as possible. In order to do that we can  ask the question of whether there exists  a point $z \in \mathcal{C}_1$ near the
boundary $\d \mathcal{C}_1$, such that the $s$-coordinate of $\psi_{N_1}^{-1}(z)$ is $0.9/\varepsilon$ for example, with
the property that $z$ is contained in the core of a $(C,\varepsilon)$-cap $\mathcal{C}_2$ with $\mathcal{C}_1\subset \mathcal{C}_2$. In fact we will prove in Section \ref{proofs}, by topological and geometric arguments, that in this case
$\mathcal{C}_1$ is disjoint from the right-hand one-quarter of the neck $N_2 \subset \mathcal{C}_2$.   
If such a point $z$ exists  we replace $\mathcal{C}_1$ by $\mathcal{C}_2$, and ask the same question as before for $\mathcal{C}_2$. Since the scalar curvature of $M$ is bounded below by a positive constant, each quarter of a neck contributes a definite amount to the volume. Since the volume of $M$ is finite, we conclude that there cannot be infinitely many disjoint quarters of  $\varepsilon$-necks, and hence the above process cannot continue forever. 

Therefore there must exist a $(C,\varepsilon)$-cap $\mathcal{C}$, of neck $N$ and core $Y$, such that no point of $\psi_N(S^2 \times \{0.9/\varepsilon\})$ is contained in the core of a $(C,\varepsilon)$-cap that contains
$\mathcal{C}$. Hence any $z_2 \in \psi_N(S^2 \times \{0.9/\varepsilon\})$ is either the center of an $\varepsilon$-neck, or it is contained in the core of a cap  that does not contain $\mathcal{C}$.

If $z_2$ is contained in the core $\tilde{Y}$ of a $(C,\varepsilon)$-cap $\tilde{\mathcal{C}}$, the fact that $\tilde{\mathcal{C}}$   does not contain $\mathcal{C}$ implies that   $S^3 = \mathcal{C} \cup \tilde{\mathcal{C}}$. We prove, moreover, that
the central sphere $\tilde{S}$ of the neck $\tilde{N}\subset \tilde{\mathcal{C}}$ is contained in the region
$\overline{Y} \cup \psi_N(S^2 \times (-1/\varepsilon,0.9/\varepsilon))$. 

More generally we have to consider the possibility of finding some $\varepsilon$-necks. If $z_2$ is the center of an $\varepsilon$-neck $N_2'$, we add $N_2'$ to a list that starts with $N_1'=N$ and  replace $z_2$ by some choice of $z_3 \in \psi_{N_2'}(S^2 \times \{0.9/\varepsilon\})$. By repeating the process we find
 a sequence
$\{N_1'=N, N_2', \dots, N_a'\}$ of $\varepsilon$-necks  with centers $z_1,\dots,z_a$ such that:
\begin{itemize}
 \item [1)]  $z_{i+1} \in \psi_{N_i}(S^2 \times \{0.9/\varepsilon\})$, 
\item [2)]  $N_{i}$ is disjoint from the left-hand one-quarter of $N_1$ for all $1 < i \leq  a$.
\end{itemize}
The necks are all oriented so that  $g\,(\d/\d s_{N_i},\d/\d s_{N_{i+1}})>0$ for all $1 \leq i < a$. A list $\{N_1'=N, N_2', \dots, N_a'\}$ with the above properties is referred to in this paper as a structured chain of $\varepsilon$-necks. Again, for volume reasons, there is an upper  bound on the number of necks in such a chain. This means that we can choose $a$ so that a point $z_{a+1}\in \psi_{N_a}(S^2 \times \{0.9/\varepsilon\})$  is  in the core $\tilde{Y}$ of a $(C,\varepsilon)$-cap $\tilde{\mathcal{C}}$. In that case we prove that 
$$
S^3 = \mathcal{C} \cup N_2' \cup \dots \cup N_a' \cup \tilde{C}.
$$ 


Since we are assuming that $M$ is diffeomorphic to a 3-sphere, the caps $\mathcal{C}$ and $\tilde{\mathcal{C}}$ have to be of type $A$ or $B$. We have to separate the proof into four cases according with the types of $\mathcal{C}$ and $\tilde{\mathcal{C}}$. In order to illustrate the method let us suppose that both caps are of type $A$ (positive sectional curvature) and that $a \geq 2$.

In Section \ref{interpolation.lemmas} we use the fact that the scales of adjacent $\varepsilon$-necks are very close to prove some interpolation lemmas. Given the structured chain of $\varepsilon$-necks $\{N_1'=N, N_2', \dots, N_a'\}$, we
produce a single diffeomorphism $\psi:S^2 \times (-1/\varepsilon, \beta) \g \bigcup_{i=1}^a N_i'$ that coincides with
$\psi_{N_1'}$ on $S^2 \times (-1/\varepsilon,0.25/\varepsilon)$, and with $\psi_{N_a'}\circ T$ on $S^2 \times (\beta -1.25/\varepsilon, \beta)$, where $T$ is some isometry of $ds^2+d\theta^2$. For each neck $N_i'$, because the metrics $h_{N_i'}^{-2}\psi_{N_i'}^*(g)$ and $ds^2 + d\theta^2$ 
are $\varepsilon$-close in the $C^2$ topology, the metrics in the linear homotopy 
$(1-\mu) \psi_{N_i'}^*(g) + \mu h_{N_i'}^2 (ds^2 + d\theta^2)$ have positive scalar curvature for all $\mu \in [0,1]$. By
interpolating between the deformations of each pair of adjacent $\varepsilon$-necks, starting with $N_1'$ and $N_2'$, we can produce a continuous path of metrics $\mu \in [0,1] \g g_\mu$ of  positive scalar curvature
on $S^2 \times (-1/\varepsilon, \beta)$, with
$g_0 = \psi^*(g)$ and  $g_1$ rotationally symmetric, and such that it restricts to the linear homotopy
$g_\mu = (1-\mu)\psi^*(g) + \mu \, h_{N_1'}^2 (ds^2 + d\theta^2)$
on $S^2 \times (-1/\varepsilon,0.25/\varepsilon)$ and  to the linear homotopy
$g_\mu = (1-\mu)\psi^*(g) + \mu \, h_{N_a'}^2  (ds^2 + d\theta^2)$  on $S^2 \times (\beta-1.25/\varepsilon, \beta)$.

We can  now  perform surgery along the central spheres $S_1'=\psi_{N_1'}(S^2 \times \{0\})$ and $S_a'=\psi_{N_a'}(S^2 \times \{0\})$, and glue standard caps to both left and right sides of each sphere as explained in Section \ref{surgery}. In doing this we break the manifold into three 
components: $(\mathcal{S}_1,g_1)$,$(P,g_P)$, and $(\mathcal{S}_2,g_2)$. An important property of surgery is that it
preserves positive sectional curvature. We use this to conclude that both the left-hand
$(\mathcal{S}_1,g_1)$ and the right-hand $(\mathcal{S}_2,g_2)$ components have positive sectional curvature. Therefore they can be deformed to  constant curvature metrics by the normalized Ricci flow.  The middle component $(P,g_P)$  is obtained by attaching standard caps
to the boundary of the region between the spheres $S_1'$ and $S_a'$.  Now we use the fact that the metric deformation on  $\bigcup_{i=1}^a N_i'$  produced by interpolation restricts to linear homotopies
on neighborhoods of the  surgery spheres to conclude that  it can be extended (also linearly) to the attached caps.  This provides a
 deformation of the metric  on $P$, through metrics of positive scalar curvature, that ends in a rotationally symmetric manifold. For that we use  that the standard initial metric is rotationally symmetric. Since any rotationally symmetric manifold is also
locally conformally flat, we can use the conformal method to finish the isotopy of $(P,g_P)$ into a round sphere.

We have proved that each one of the components $(\mathcal{S}_1,g_1)$, $(P,g_P)$, and $(\mathcal{S}_2,g_2)$ is isotopic to a round sphere. Since the Gromov-Lawson connected sum construction can be performed continuously on families of metrics, we obtain that a connected sum $(\mathcal{S}_1,g_1) \,\#\, (P,g_P) \,\#\, (\mathcal{S}_2,g_2)$ is isotopic to a Gromov-Lawson connected sum of three round spheres $S^3_1 \#S^3_2 \# S^3_3$. Again we can use the conformal method (due to
conformal flatness) to prove $S^3_1 \#S^3_2 \# S^3_3$ is isotopic to a single round sphere. The final observation is that
the Gromov-Lawson construction can be used to revert surgery in the following sense: if the necks that formed  $(\mathcal{S}_1,g_1) \,\#\, (P,g_P) \,\#\, (\mathcal{S}_2,g_2)$ were introduced at the surgery tips,  then  a local application of the conformal method at the neck regions implies that $(\mathcal{S}_1,g_1) \,\#\, (P,g_P) \,\#\, (\mathcal{S}_2,g_2)$ is isotopic to the original $(S^3,g)$. This is explained in Section \ref{connected.sums}. 

We have concluded then that 
$(S^3,g)$ is isotopic to a round sphere. The arguments to handle the other types of caps are similar. The key point is that they are almost rotationally symmetric, and therefore can be treated by the conformal method as we did above for $(P,g_P)$. 

If $M$ is diffeomorphic to $S^2 \times S^1$, then every point is contained in an $\varepsilon$-neck whose central sphere does not separate $M$.
Let $N$ be one such neck, with central sphere $S$. We do surgery on $N$ along $S$ and glue standard caps to both sides of it. The resulting manifold is a 3-sphere $(S^3,g_{surg})$ endowed with a metric of positive scalar curvature such that every point of it  has a canonical neighborhood. The previous arguments imply that  $(S^3,g_{surg})$ is isotopic to a round sphere. Again we conclude that  the original manifold $(S^2 \times S^1,g)$ is isotopic to the Gromov-Lawson connected sum of 
$(S^3,g_{surg})$ with itself, where the connected sum is performed at the tips of the spherical caps. By continuously
performing the connected sum of $S^3$ to itself along the isotopy of $(S^3,g_{surg})$, we conclude that $(S^2 \times S^1,g)$ is isotopic to   a Gromov-Lawson connected sum of a round 3-sphere to itself. But this is the exact definition of
a canonical metric on $S^2\times S^1$.

This finishes the  first step:  to show that any compact orientable 3-manifold $(M,g)$ such that every point in $M$ is contained in a canonical neighborhood is isotopic to a canonical metric. 

Let us go back to the Ricci flow with surgery
$(M^3_i,g_i(t))_{t \in [t_i,t_{i+1})}$ with initial condition $(M^3,g_0)$. 
 The manifold $(M_j,g_{t_j})$ is clearly isotopic to $(M_j,g_{t'})$, through Ricci flow.
Since each component of $(M_j,g_{t'})$ is such that every point has a canonical neighborhood, we conclude that
every component of $M_j$ at time $t_j$ is isotopic to a canonical metric.

It remains to explain the induction step: to prove that if every component of $M_{i+1}$ at time $t_{i+1}$ is isotopic to a canonical
metric, then the same is true for any component of $M_i$ at time $t_i$. We will give the details of that in Section 
\ref{proofs}. It follows from Perelman's description of singularity formation, and arguments similar to the ones we
sketched above, that any component of $M_{i}$ at time $t_{i+1}-\eta$, with $\eta >0$ sufficiently small, is isotopic
to a connected sum of some of the components of $M_{i+1}$ at time $t_{i+1}$ and finitely many components $(P_j,g_{P_j})$ that
are covered by canonical neighborhoods. The existence of the components $(P_j,g_{P_j})$  comes from the high curvature regions that are discarded after surgery. It follows from the step one of the induction that the manifolds
$(P_j,g_{P_j})$ are isotopic to canonical metrics, and it follows from  the induction hypothesis that the components of $M_{i+1}$ at time $t_{i+1}$ are isotopic to canonical metrics as well. The theorem will follow once we prove that any connected manifold obtained from  finitely many components endowed with canonical metrics by performing Gromov-Lawson connected sums  is isotopic to  a single component with a canonical metric. This is the content of Lemma \ref{canonical.metrics}.

It follows from backwards induction on $i$ that  any metric of positive scalar curvature on $M^3$
can be continuosly deformed, through metrics of positive scalar curvature, into a canonical metric. This proves that the moduli space
$\mathcal{R}_+(M)/{\rm Diff}(M)$ is path-connected.


\section{Conformal deformations}\label{conformal}

This section will present a few applications of the  conformal method.

\begin{prop}\label{conformal.deformations}
Let $(M^n,g)$ be a compact Riemannian manifold. Then the space  of metrics with positive scalar curvature in the   conformal class $[g]$ of $g$ is contractible. 
\end{prop}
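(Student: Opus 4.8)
The plan is to reduce the statement to the elementary fact that a nonempty convex subset of a topological vector space is contractible. Assume first $n\geq 3$. Every metric in the conformal class $[g]$ can be written uniquely as $g_u=u^{4/(n-2)}g$ for a positive smooth function $u$, and $u\mapsto g_u$ is a homeomorphism from the open cone of positive functions in $C^\infty(M)$, with the $C^\infty$ topology, onto $[g]$ with the induced topology. Under this change the scalar curvature transforms by
$$
R_{g_u}=u^{-\frac{n+2}{n-2}}\mathcal{L}_g u,\qquad \mathcal{L}_g u:=-\frac{4(n-1)}{n-2}\Delta_g u+R_g u,
$$
where $\mathcal{L}_g$ is the (linear) conformal Laplacian. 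Since $u>0$, the condition $R_{g_u}>0$ is equivalent to $\mathcal{L}_g u>0$ pointwise on $M$. Hence, under the homeomorphism above, the space of positive scalar curvature metrics in $[g]$ corresponds to
$$
\mathcal{C}=\{\, u\in C^\infty(M)\ :\ u>0 \text{ and } \mathcal{L}_g u>0 \text{ on } M\,\}.
$$

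The key observation is that $\mathcal{C}$ is convex: the set $\{u:u>0\}$ is convex, and because $\mathcal{L}_g$ is linear the set $\{u:\mathcal{L}_g u>0\}$ is convex as well, so their intersection is convex. (If $n=2$ one argues identically, writing $g_f=e^{2f}g$ and using $R_{g_f}=e^{-2f}(R_g-2\Delta_g f)$; the relevant set $\{f\in C^\infty(M):R_g-2\Delta_g f>0\}$ is convex because $f\mapsto R_g-2\Delta_g f$ is affine, and there is no positivity constraint on $f$.) Now $C^\infty(M)$ is a Fr\'echet space and $\mathcal{C}$ is a convex subset of it, which we may assume nonempty. Fixing any $u_*\in\mathcal{C}$, the straight-line homotopy
$$
H\colon \mathcal{C}\times[0,1]\g\mathcal{C},\qquad H(u,t)=(1-t)u+t\,u_*,
$$
is jointly continuous, takes values in $\mathcal{C}$ by convexity, and satisfies $H(\cdot,0)=\mathrm{id}_{\mathcal{C}}$, $H(\cdot,1)\equiv u_*$. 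Transporting $H$ through the homeomorphism $u\mapsto g_u$ yields a contraction of the space of positive scalar curvature metrics in $[g]$.

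The only points requiring a little care are the verification that $u\mapsto g_u$ (resp.\ $f\mapsto g_f$) is a homeomorphism for the $C^\infty$ topologies -- immediate, since raising to a fixed positive power and multiplying by the fixed tensor $g$ are smooth operations on the positive cone with smooth inverses -- and bookkeeping of sign conventions in the conformal transformation law, which does not affect the convexity argument. There is no substantial obstacle; this is precisely the extension of Weyl's argument to conformal classes in arbitrary dimension alluded to in the introduction.
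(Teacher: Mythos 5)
Your argument is correct and is essentially the paper's own proof: parametrize the conformal class by the positive function $u$ (or the factor $f$ when $n=2$), observe that positivity of scalar curvature corresponds to a convex set because the conformal Laplacian is linear, and contract along straight lines. The only cosmetic difference is your $e^{2f}$ convention in dimension two versus the paper's $e^{f}$, which changes nothing.
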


\begin{proof}
If $n \geq 3$ the set
$$
\{u \in C^\infty(M): u>0,  R_{u^\frac{4}{n-2}g}>0\}
$$
is convex, since $ R_{u^\frac{4}{n-2}g} = u^{-\frac{n+2}{n-2}}\, \Big(- \frac{4(n-1)}{(n-2)}\Delta_g u + R_gu\Big)$. 


Similarly, if $n=2$ the set
$$
\{f \in C^\infty(M):   K(e^{f} g)>0\} 
$$
is convex, since $K_{e^f\,g} = e^{-f}\, \Big(K_g - 1/2\, \Delta_g f\Big)$. Here $K_g$ denotes the Gauss curvature of the metric $g$.

 The proof now is straightforward since these sets parametrize the metrics of positive scalar curvature in $[g]$.
\end{proof}

Hence
\begin{cor}\label{uniformization}
Let $\Gamma$ be a finite subgroup of $O(n+1)$ acting freely 
on $S^n$, $n \geq 3$. Suppose $g$ is a locally conformally flat metric on $S^n/\Gamma$ with positive scalar curvature. Then there exists
a continuous path of metrics $g_\mu= e^{f_\mu}\, g$ of positive scalar curvature, $\mu \in [0,1]$, such that $g_0=g$ and $g_1$ has
constant sectional curvature.
\end{cor}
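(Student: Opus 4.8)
The plan is to combine Proposition \ref{conformal.deformations} with the classical uniformization results for locally conformally flat manifolds due to Kuiper. First I would invoke the fact (from Kuiper, \cite{KUIPER49}, \cite{KUIPER50}) that a locally conformally flat metric $g$ on $S^n/\Gamma$ with positive scalar curvature, where $\Gamma \subset O(n+1)$ acts freely on $S^n$, has a developing map to $S^n$ which, because positive scalar curvature forces the developing image to be all of $S^n$ (the complement of the image has zero Newtonian capacity / is small enough that a complete conformally flat metric of positive scalar curvature cannot avoid it), is actually a diffeomorphism onto $S^n$. Consequently the developing map descends to a conformal diffeomorphism between $(S^n/\Gamma, g)$ and a spherical space form $(S^n/\Gamma', h_0)$ with its round metric; since $\Gamma$ and $\Gamma'$ are then conjugate in $O(n+1)$, we may identify the two and conclude that $g$ is conformal to the round metric $g_{round}$ on $S^n/\Gamma$. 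In other words, $g = u^{4/(n-2)} g_{round}$ for some positive smooth function $u$, and $g_{round}$ itself has positive scalar curvature (constant sectional curvature).

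With both $g$ and $g_{round}$ lying in the same conformal class $[g_{round}]$ and both having positive scalar curvature, the result is immediate from Proposition \ref{conformal.deformations}: the space of positive scalar curvature metrics in $[g_{round}]$ is contractible, hence in particular path-connected, so there is a continuous path $\mu \in [0,1] \g g_\mu$ of positive scalar curvature metrics in $[g_{round}]$ with $g_0 = g$ and $g_1 = g_{round}$. Explicitly one can take $g_\mu = \big((1-\mu) u + \mu\big)^{4/(n-2)} g_{round}$, using the convexity of the set $\{v \in C^\infty : v > 0, \ R_{v^{4/(n-2)} g_{round}} > 0\}$ noted in the proof of Proposition \ref{conformal.deformations}; writing $e^{f_\mu} = \big((1-\mu)u + \mu\big)^{4/(n-2)}$ puts the path in the stated form $g_\mu = e^{f_\mu} g$.

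The main obstacle, and the only nontrivial input, is the uniformization step: knowing that a \emph{closed} locally conformally flat manifold of positive scalar curvature with spherical-space-form fundamental group is in fact conformally the round space form. This is where one must cite Kuiper's work — the key point being that positive scalar curvature promotes the (a priori only locally injective) developing map into a global diffeomorphism onto $S^n$, so that the conformal structure is the standard one and $\Gamma$ is realized inside the conformal group $O(n+1,1)$ as a subgroup conjugate into $O(n+1)$. Once that is granted, everything else is a direct application of the convexity/contractibility already established, and no further analysis is needed.
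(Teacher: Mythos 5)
Your proposal is correct and follows essentially the same route as the paper: invoke Kuiper (\cite{KUIPER49}, \cite{KUIPER50}) to obtain a constant sectional curvature metric in the conformal class of $g$, and then apply the convexity statement of Proposition \ref{conformal.deformations} to produce the path. The only cosmetic difference is that you justify the Kuiper step via positive scalar curvature controlling the developing map, whereas here, since $\Gamma$ is finite, the universal cover $S^n$ is already compact and simply connected, so Kuiper's theorem yields the conformal equivalence with the round sphere directly and the deck group is conjugated into $O(n+1)$ without needing the scalar curvature hypothesis at that stage.
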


\begin{proof}
 Since $g$ is locally conformally flat, it follows from the works of Kuiper (\cite{KUIPER49} and \cite{KUIPER50}) that  there exists a  metric $\overline{g}$ of constant sectional curvature  in the
conformal class of $g$  (see also   \cite{SCHOENYAU88}, \cite{SCHOENYAU94} and \cite{TANNO73}). 

The corollary then follows from Proposition \ref{conformal.deformations}.
\end{proof}

Let $d\theta^2$ be a constant curvature metric on $S^{n-1}$ of scalar curvature one.
The next application  will be used later to deform surgery necks.
\begin{prop}\label{symmetric.deformation}
Let $g=dr^2 + w^2(r) \, d\theta^2$ be a rotationally symmetric Riemannian metric on $S^{n-1} \times (a,b)$, $n \geq 3$, of positive
scalar curvature. Assume $w = 1$ in the intervals $(a,a')$ and $(b',b)$.  Then there exists a continuous
path $\mu \in [0,1]   \mapsto g_\mu$ of rotationally symmetric positive scalar curvature metrics on $S^{n-1} \times (a,b)$
such that $g_0=g$, $g_1=dr^2 + d\theta^2$, and $g_\mu = g$ in $S^{n-1} \times (a,a')$ and $S^{n-1} \times (b',b)$ for all $\mu \in [0,1]$.
\end{prop}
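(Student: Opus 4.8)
The plan is to route the deformation through the conformal class of $g$, rather than trying to flatten $w$ directly. First I would record that, setting $G:=w(r)^{-2}\,dr^2+d\theta^2$, the reparametrization $t=\int^{r}d\tau/w(\tau)$ turns $G$ into the product metric $dt^2+d\theta^2$ on a cylinder; since $d\theta^2$ has scalar curvature one, $R_G\equiv 1>0$. Moreover $g=w^2\,G=\big(w^{(n-2)/2}\big)^{4/(n-2)}\,G$ (immediate from the definitions), so $g$ and $G$ are conformal metrics on $N:=S^{n-1}\times(a,b)$, both of positive scalar curvature, and $G$ is rotationally symmetric.

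Step one is a purely conformal deformation of $g$ into $G$. The convexity observed in the proof of Proposition \ref{conformal.deformations} is pointwise and uses no compactness: the operator $\phi\mapsto -\frac{4(n-1)}{n-2}\Delta_G\phi+R_G\,\phi$ is linear, so $\{\phi\in C^\infty(N):\phi>0,\ R_{\phi^{4/(n-2)}G}>0\}$ is convex. It contains $\phi_0:=w^{(n-2)/2}$ (which represents $g$) and $\phi_1\equiv 1$ (which represents $G$, since $R_G=1>0$). Hence
$$g_\mu:=\big((1-\mu)\,w^{(n-2)/2}+\mu\big)^{\frac{4}{n-2}}\,G,\qquad \mu\in[0,1],$$
is a continuous path of positive scalar curvature metrics from $g$ to $G$; each $g_\mu$ is rotationally symmetric because $w$ and $G$ are; the conformal factor is bounded away from $0$ uniformly in $\mu$ (as $w\equiv1$ near the ends forces $\inf_N w>0$), so the path is continuous in $C^\infty$; and on $(a,a')$ and $(b',b)$ the conformal factor equals $1$, so $g_\mu=G=dr^2+d\theta^2=g$ there for all $\mu$.

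Step two deforms $G=w^{-2}\,dr^2+d\theta^2$ into $dr^2+d\theta^2$. The key point is that any metric $\rho(r)^2\,dr^2+d\theta^2$ with $\rho>0$ becomes the product cylinder after the arc-length change $\sigma=\int\rho\,dr$, hence has $R\equiv1>0$; so scalar curvature is no obstruction at all here. The functions $\rho\in C^\infty((a,b))$ that are positive and equal to $1$ near the ends form a convex set containing $\rho=1/w$ (giving $G$) and $\rho\equiv1$ (giving $dr^2+d\theta^2$), so the straight-line homotopy $\rho_\nu:=(1-\nu)/w+\nu$ produces a continuous path $h_\nu:=\rho_\nu^2\,dr^2+d\theta^2$, $\nu\in[0,1]$, of rotationally symmetric positive scalar curvature metrics from $G$ to $dr^2+d\theta^2$ with $h_\nu=g$ on $(a,a')$ and $(b',b)$ throughout. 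Concatenating the two paths, which agree at $G$, gives the desired deformation.

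I expect the only real subtlety to be the two-step structure itself: a single conformal deformation of $g$ within $[g]$ cannot reach $dr^2+d\theta^2$, since that metric is generally not conformal to $g$ (equivalently, the length $\int_a^b dr/w$ of $G$ need not equal $b-a$). This is exactly what forces Step two, and what makes Step two work is the elementary remark that reparametrizing the $\mathbb{R}$-factor of a cylinder costs nothing in scalar curvature. By contrast, the naive attempt $dr^2+\big((1-\mu)w+\mu\big)^2 d\theta^2$ does not work: where $w$ has a thin convex ``waist'' this homotopy develops negative scalar curvature at intermediate times. Everything else — the conformal change-of-scalar-curvature formula, checking that all intermediate metrics are $SO(n)$-invariant, and the boundedness of the conformal factors — is routine.
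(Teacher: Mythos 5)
Your proposal is correct and is essentially the paper's own argument: the paper likewise first moves $g$ conformally to $\tilde g=w^{-2}g=w^{-2}dr^2+d\theta^2$ (using the convexity of positive conformal factors, exactly your Step one with the factor written relative to $g$ instead of $G$), and then interpolates the coefficient of $dr^2$ to reach $dr^2+d\theta^2$, noting as you do that all such metrics are cylindrical after a change of variables. The only cosmetic difference is that the paper interpolates $w^{-2}$ linearly where you interpolate $w^{-1}$; both work for the same reason.
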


\begin{proof}
Let $\tilde{g} = w^{-2}(r) \, g$, and $v = \int w^{-1}(r)\, dr$.  Then $\tilde{g} = dv^2 + d\theta^2$. 

If $\mu \in [0,1/2]$, we define $g_\mu = \Big((1-2\mu) + 2\mu\, w^{\frac{2-n}{2}}(r)\Big)^\frac{4}{n-2}\, g$. Hence $g_0=g$, $g_{1/2}=\tilde{g}$, and $g_\mu = g$ in $(a,a')$ and $(b',b)$ for all $\mu \in [0,1/2]$. It follows from the proof of Proposition 
\ref{conformal.deformations} that $g_\mu$ has 
 positive scalar curvature for all $\mu \in [0,1/2]$.

If $\mu \in [1/2,1]$, we define $g_\mu = \Big((2-2\mu)\, w^{-2}(r) + 2\mu -1\Big) dr^2 + d\theta^2$. Hence $g_{1/2}=\tilde{g}$, $g_1= dr^2 + d\theta^2$, and $g_\mu = g$ in $(a,a')$ and $(b',b)$ for all $\mu \in [1/2,1]$. It is easy to see by a change of 
variables that the metrics $g_\mu$ are cylindrical for all $\mu \in [1/2,1]$.   This completes
the proof of the proposition.
\end{proof}


\section{Interpolation Lemmas}\label{interpolation.lemmas}

We will define the concept of an $\varepsilon$-neck structure and prove some  interpolation lemmas. We refer the reader
to the appendix of \cite{MORGANTIAN07} for basic properties of $\varepsilon$-necks and their intersections. 

Let $(M^3,g)$ be a Riemannian manifold. Given $\varepsilon>0$, an {\it $\varepsilon$-neck structure} on an open set $N \subset M$, centered at $x \in M$, is a diffeomorphism 
$\psi: S^2 \times (-1/\varepsilon, 1/\varepsilon) \g N \subset M$ with $x \in \psi\, (S^2 \times \{0\})$, and such that
$R_g(x)\psi^*(g)$ is within $\varepsilon$ in the $C^{[1/\varepsilon]}$-topology of the product metric $g_{cyl}= ds^2+ d\theta^2$, where
$d\theta^2$ is a fixed metric on $S^2$ of constant scalar curvature 1. We call $N$ an {\it $\varepsilon$-neck} of 
{\it central sphere} $S_N=\psi\, (S^2 \times \{0\})$ and  {\it scale}  $h_N=R_g(x)^{-1/2}$. Let also 
$s_N: N \g \mathbb{R}$ be the function  $s_N(\psi(\theta,t))=t$, and $\d/\d s_N = \psi_*(\d/\d s)$.


\begin{lem}\label{interpolation.lemma.1}
 There exists $0<\varepsilon_1 \leq 1$ such that the following is true. Suppose $0 < \varepsilon \leq \varepsilon_1$, and let $\psi_1: S^2 \times (-1/\varepsilon, 1/\varepsilon) \g N_1$ and $\psi_2: S^2 \times (-1/\varepsilon, 1/\varepsilon) \g N_2$ be
$\varepsilon$-neck structures  in $(M^3,g)$ of scales $h_1$ and $h_2$, respectively,  such that  
$$
\Lambda= s_{N_1}^{-1}((-0.95/\varepsilon,0.95/\varepsilon))\cap s_{N_2}^{-1}((-0.95/\varepsilon,0.95/\varepsilon)) \ne \emptyset.
$$ 
Suppose also that any embedded 2-sphere separates the manifold $M$.  Let
$z\in \Lambda$, and assume  $g\, (\d/\d s_{N_1},\d/\d s_{N_2})>0$. Then there exists a diffeomorphism 
$\psi:S^2 \times (-1/\varepsilon,\beta) \g \tilde{N} \subset N_1 \cup N_2$, $\beta=1/\varepsilon-s_{N_2}(z)+s_{N_1}(z)$, with the following properties:
\begin{itemize}
 \item [a)] $\psi\,(\theta,t)=\psi_1(\theta,t)$ for   $(\theta,t) \in S^2 \times (-1/\varepsilon,s_{N_1}(z)-0.025/\varepsilon)$,
 \item [b)] $\psi\,(\theta,t)=\psi_2(A\cdot \theta, t+1/\varepsilon-\beta)$ for 
$(\theta,t) \in S^2 \times (\beta +s_{N_2}(z)-0.975/\varepsilon,\beta)$, where $A$ is an isometry of $(S^2,d\theta^2)$,
\item [c)] there exists  a continuous path of metrics $\mu \in [0,1] \mapsto  g_\mu$ of positive scalar curvature on $S^2 \times (-1/\varepsilon,\beta)$, with
$g_0 = \psi^*(g)$, $g_1$ rotationally symmetric, and such that it restricts to the linear homotopy
$g_\mu = (1-\mu)\psi^*(g) + \mu \, h_1^2 g_{cyl}$
on $S^2 \times (-1/\varepsilon,s_{N_1}(z)-0.025/\varepsilon)$ and to the linear homotopy
$g_\mu = (1-\mu)\psi^*(g) + \mu \, h_2^2 g_{cyl}$  on $S^2 \times (\beta +s_{N_2}(z)-0.975/\varepsilon,\beta)$.
\end{itemize}
\end{lem}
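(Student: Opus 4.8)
The plan is to glue $\psi_{1}$ and $\psi_{2}$ into a single chart $\psi$ by interpolating, across a window around $z$, the near-identity correction between the two neck parametrizations, and then to observe that $\psi^{*}(g)$, after rescaling by a slowly varying factor, stays $C^{2}$-close to the round cylinder everywhere, so that a single linear homotopy to a rotationally symmetric model metric yields (c). To begin I would extract from the comparison theory for intersecting $\varepsilon$-necks (the appendix of \cite{MORGANTIAN07}, in particular Proposition~A.11), after shrinking $\varepsilon_{1}$, the following: the ratio $h_{1}/h_{2}$ is as close to $1$ as we wish; a neighbourhood of $z$ in $N_{1}\cap N_{2}$ contains all the sub-cylinders used below; and there $\psi_{2}^{-1}\circ\psi_{1}$ is $C^{[1/\varepsilon]}$-close to an isometry $\tilde F$ of $g_{cyl}$ of the form $\tilde F(\theta,t)=(A\theta,\,t+c)$, with $A\in\mathrm{Isom}(S^{2},d\theta^{2})$ and $c=s_{N_{1}}(z)-s_{N_{2}}(z)$ (so that $\tilde F$ matches the two charts at $z$). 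The orientation assumption $g(\partial/\partial s_{N_{1}},\partial/\partial s_{N_{2}})>0$ forces the $+$ sign on the $t$-coordinate of $\tilde F$, and the hypothesis that every embedded $2$-sphere separates $M$ rules out the cyclic configurations (possible on $S^{2}\times S^{1}$) in which the two necks meet in more than one component or wind around one another; hence $N_{1}\cup N_{2}$ is genuinely a single long tube and the comparison above holds on the whole overlap.

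Next I would assemble $\psi$. Write $\psi_{2}^{-1}\circ\psi_{1}=\tilde F\circ\rho$ on the overlap, so $\rho=\tilde F^{-1}\circ\psi_{2}^{-1}\circ\psi_{1}$ is $C^{[1/\varepsilon]}$-close to the identity. Fix a cutoff $\chi:\mathbb{R}\to[0,1]$ with $\chi\equiv 1$ on $(-\infty,s_{N_{1}}(z)-0.025/\varepsilon]$ and $\chi\equiv 0$ on $[s_{N_{1}}(z)+0.025/\varepsilon,\infty)$, with $|\chi^{(k)}|\lesssim\varepsilon^{k}$ (the transition window has width $\sim 1/\varepsilon$), and use it to interpolate between $\rho$ and the identity, producing $\rho_{\chi}$ which is still $C^{2}$-close to the identity and hence a diffeomorphism onto its image. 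Define
$$
\psi(\theta,t)=\begin{cases}\psi_{1}(\theta,t), & t\le s_{N_{1}}(z)-0.025/\varepsilon,\\ \psi_{2}\big(\tilde F(\rho_{\chi}(\theta,t))\big), & |t-s_{N_{1}}(z)|\le 0.025/\varepsilon,\\ \psi_{2}(A\theta,\,t-c), & t\ge s_{N_{1}}(z)+0.025/\varepsilon,\end{cases}
$$
defined on $S^{2}\times(-1/\varepsilon,\beta)$ with $\beta=1/\varepsilon+c=1/\varepsilon-s_{N_{2}}(z)+s_{N_{1}}(z)$ being exactly the value of $t$ at which $t-c$ reaches $1/\varepsilon$. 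The three formulas agree on the overlaps of their domains because $\rho_{\chi}=\rho$ at the left edge and $\rho_{\chi}=\mathrm{id}$ at the right edge of the window; using the control on $\rho_{\chi}$ together with the separating hypothesis (which prevents the $\psi_{1}$-part and the $\psi_{2}$-part from colliding in $M$), $\psi$ is an embedding onto $\tilde N\subset N_{1}\cup N_{2}$. Properties (a) and (b) then hold by construction, since $t+1/\varepsilon-\beta=t-c$.

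For part (c), choose a smooth $h:(-1/\varepsilon,\beta)\to(0,\infty)$ equal to $h_{1}$ on $(-1/\varepsilon,s_{N_{1}}(z)-0.025/\varepsilon)$ and to $h_{2}$ on $(\beta+s_{N_{2}}(z)-0.975/\varepsilon,\beta)$, with $|h-h_{1}|\le|h_{2}-h_{1}|$ and $|h^{(k)}|\lesssim\varepsilon^{k}|h_{2}-h_{1}|$; since $h_{1}/h_{2}\to 1$, this is a tiny perturbation. On the first end $\psi^{*}(g)=\psi_{1}^{*}(g)$ and $h_{1}^{-2}\psi_{1}^{*}(g)$ is $\varepsilon$-close to $g_{cyl}$; on the second end $\psi^{*}(g)=\tilde F^{*}\psi_{2}^{*}(g)$ and, as $\tilde F$ is a $g_{cyl}$-isometry, $h_{2}^{-2}\tilde F^{*}\psi_{2}^{*}(g)=\tilde F^{*}\big(h_{2}^{-2}\psi_{2}^{*}(g)\big)$ is $\varepsilon$-close to $g_{cyl}$; on the window $\psi^{*}(g)=\rho_{\chi}^{*}\tilde F^{*}\psi_{2}^{*}(g)$ is $C^{2}$-$C\varepsilon$-close to $h_{2}^{2}g_{cyl}$ because $\rho_{\chi}$ is $C^{2}$-close to the identity, and $h\approx h_{2}$ there. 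Hence $h^{-2}\psi^{*}(g)$ is $C^{2}$-$C\varepsilon$-close to $g_{cyl}$ throughout. Let $G$ be the rotationally symmetric metric $h(t)^{2}g_{cyl}=h(t)^{2}dt^{2}+h(t)^{2}d\theta^{2}$; since $h$ is nearly constant with derivatives $O(\varepsilon|h_{2}-h_{1}|)$, one has $R_{G}>0$. Now set $g_{\mu}=(1-\mu)\psi^{*}(g)+\mu\,G$. Then $h^{-2}g_{\mu}=(1-\mu)h^{-2}\psi^{*}(g)+\mu\,g_{cyl}$ is a convex combination of metrics each $C^{2}$-$C\varepsilon$-close to $g_{cyl}$, hence has positive scalar curvature once $\varepsilon_{1}$ is small enough; the conformal rescaling by the slowly varying factor $h^{2}$ changes the scalar curvature only by the factor $h^{-2}$ together with terms of order $|\nabla\log h|+|\nabla^{2}\log h|=O(\varepsilon^{2})$, so $R_{g_{\mu}}>0$ for all $\mu$. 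By construction $g_{0}=\psi^{*}(g)$, $g_{1}=G$ is rotationally symmetric, and on the two end regions $G$ equals $h_{1}^{2}g_{cyl}$, respectively $h_{2}^{2}g_{cyl}$, so $g_{\mu}$ restricts there to the two prescribed linear homotopies.

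The step I expect to be the main obstacle is the first one: extracting from the neck-intersection theory enough uniform control — near-equality of the scales, alignment of the product directions (hence the clean affine form of $\tilde F$), and the exclusion of cyclic overlaps, for which the separating hypothesis is essential — to be sure that $N_{1}\cup N_{2}$ is a single tube and that the transition map is $C^{[1/\varepsilon]}$-close to a genuine $g_{cyl}$-isometry. Once that is in place, both the chart construction and the positivity of scalar curvature along the linear homotopy are routine.
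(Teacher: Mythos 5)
Your proposal is correct and follows essentially the same route as the paper: closeness of scales and of the transition map $\psi_2^{-1}\circ\psi_1$ to an affine cylindrical isometry $(\theta,t)\mapsto(A\cdot\theta,\,t+s_{N_2}(z)-s_{N_1}(z))$, a cutoff interpolation of that transition map to glue the two charts, the separating-sphere hypothesis to guarantee injectivity of the glued chart, and for (c) a single linear homotopy from $\psi^*(g)$ to a rotationally symmetric metric whose scale interpolates between $h_1^2$ and $h_2^2$. The only real deviations are that the paper performs the interpolation over a window of \emph{fixed} width around $s_{N_1}(z)$ (where the required closeness to the affine isometry is established by a compactness/contradiction argument, rather than being read off Proposition A.11 over a stretch of length $\sim 1/\varepsilon$ as you assert), which automatically gives (a), (b) and the end restrictions in (c), and a harmless sign slip in your $\tilde F$ (the translation should be by $s_{N_2}(z)-s_{N_1}(z)$, consistent with your later formula $t-c$).
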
 

\begin{figure}
\begin{center}
\input 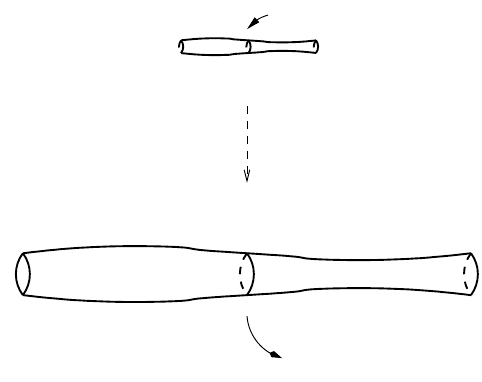_t
\caption{An $\varepsilon$-neck after scaling.}
\end{center}
\end{figure}

\begin{proof}
The Proposition A.11 of \cite{MORGANTIAN07} asserts that  $|h_1/h_2-1|< 0.1$ if
 $\varepsilon_1$ is sufficiently small. It is not difficult to see then that, if $\varepsilon_1$ is sufficiently small, we have
$$
\psi_1(S^2 \times (s_{N_1}(z)-4,s_{N_1}(z)+4))\subset \psi_2(S^2 \times (s_{N_2}(z)-0.04/\varepsilon,s_{N_2}(z)+0.04/\varepsilon)).
$$

Let  
$$
\varphi: S^2 \times (s_{N_1}(z)-4,s_{N_1}(z)+4) \g S^2 \times (s_{N_2}(z)-0.04/\varepsilon,s_{N_2}(z)+0.04/\varepsilon)
$$ be given by 
$\varphi= \psi_2^{-1} \circ \psi_1$. Notice that $\varphi$ is an isometry between the metrics $h_2^{-2}\psi_2^*(g)$ and $h_2^{-2}\psi_1^*(g)$, which are both small perturbations of $g_{cyl}$. Given any $\alpha >0$ we can choose $\varepsilon_1$
sufficiently small 
so that there is always an isometry $A$ of $(S^2,d\theta^2)$ with
$\varphi$ within $\alpha$ in the $C^{[1/\alpha]}$-topology over $S^2 \times (s_{N_1}(z)-3,s_{N_1}(z)+3)$ of the map $\hat{\varphi}$ given by
$\hat{\varphi}(\theta,t) = (A\cdot \theta, t + s_{N_2}(z)-s_{N_1}(z))$. It is not difficult to prove the existence of $A$ by
a contradiction argument.

Write $\hat{\varphi}^{-1} \circ \varphi\, (\theta,t) = (p(\theta,t),t + q(\theta,t)) \in S^2 \times \mathbb{R}$, where
$p(\theta,t) = \exp_\theta V(\theta,t)$, $V(\theta,t) \in T_\theta S^2$. By $\exp$ we mean  the exponential map of $(S^2,d\theta^2)$.

Let   $\eta : \re \g \re$ be a cutoff function such that $0 \leq \eta \leq 1$, $\eta(t) = 1$ if $t \leq -1$, and
$\eta(t)=0$ if $t \geq 1$. Define 
$$
\gamma(\theta,t) = \Big(\exp_\theta (\eta(t-s_{N_1}(z))\, V(\theta,t)), t+ \eta(t-s_{N_1}(z))q(\theta,t)\Big).
$$ 
Hence $\gamma(\theta,t) = \hat{\varphi}^{-1} \circ \varphi(\theta,t)$ if $t \leq s_{N_1}(z)-1$, while $\gamma(\theta,t)=(\theta,t)$ if
$t \geq s_{N_1}(z)+ 1$. Notice that $\gamma$ is a small perturbation of the identity map. 


Define $\psi : S^2 \times (-1/\varepsilon,\beta) \g M$ by 
$$
\psi(\theta,t) = 
\left\{
\begin{array}{lcr}
\psi_1(\theta,t) &\ {\rm if } \ &-1/\varepsilon < t \leq s_{N_1}(z)-2,\\
\psi_2(\hat{\varphi}(\gamma(\theta,t))) &\ {\rm if } \ &s_{N_1}(z)-2\leq t < s_{N_1}(z)+2,\\
\psi_2(\hat{\varphi}(\theta,t)) &\ {\rm if } \ &s_{N_1}(z)+2 \leq t < \beta.
\end{array}
\right.
$$
Since any embedded 2-sphere separates the manifold $M$, we necessarily have that 
$\psi_1(S^2 \times (-1/\varepsilon,s_{N_1}(z)-2)) \cap \psi_2(S^2 \times (s_{N_2}(z)+2,1/\varepsilon)) =\emptyset$. If
$\varepsilon_1$ is sufficiently small, it follows that  $\psi$ is a diffeomorphism onto its image, and  it satisfies properties (a)
and (b) of the Lemma.

Let $\hat{g}$ be the  rotationally symmetric metric given by
$$
\hat{g}(\theta,t) = \Big(\eta(t-s_{N_1}(z)) h_1^2 + (1-\eta(t-s_{N_1}(z)))h_2^2\Big) g_{cyl}(\theta,t).
$$
Hence
$$
\hat{g}(\theta,t) = 
\left\{
\begin{array}{lcr}
h_1^2g_{cyl}(\theta,t) &\ {\rm if } \ &-1/\varepsilon < t \leq s_{N_1}(z)-2,\\
h_2^2g_{cyl}(\theta,t) &\ {\rm if } \ &s_{N_1}(z)+2 \leq t < \beta.
\end{array}
\right.
$$

Since  $h_1^{-2}\hat{g}$ and $h_1^{-2} \psi^*(g)$ are both small perturbations of $g_{cyl}$,   the metrics 
$$
(1-\mu) \psi^*(g) + \mu \hat{g}
$$ 
have positive scalar curvature for all $\mu \in [0,1]$. This proves property (c).
\end{proof}

A {\it structured chain of $\varepsilon$-necks} in $(M^3,g)$ is a sequence
$\{N_1, \dots, N_a\}$ of $\varepsilon$-necks  with centers $x_1,\dots,x_a$ such that:
\begin{itemize}
 \item [1)]  $s_i(x_{i+1})=0.9/\varepsilon$ and $g\,(\d/\d s_{N_i},\d/\d s_{N_{i+1}})>0$ for all $1 \leq i < a$,
\item [2)]  $N_{i}$ is disjoint from the left-hand one-quarter of $N_1$ for all $1 < i \leq  a$.
\end{itemize}

\begin{figure}
\begin{center}
\input 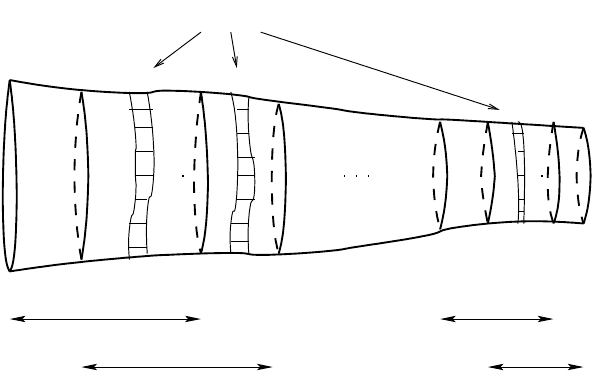_t
\caption{A structured chain of $\varepsilon$-necks.}
\end{center}
\end{figure}

\begin{lem}\label{interpolation.lemma.2}
 Let $\{N_1,\dots,N_a\}$ be a structured chain of $\varepsilon$-necks  in $(M^3,g)$ of scales $h_1,\dots,h_a$. If
 $0<\varepsilon \leq \varepsilon_1$, then 
there exists a diffeomorphism
$\psi: S^2 \times (-1/\varepsilon, \beta) \g \bigcup_{i=1}^a N_i$, $\beta>1.5/\varepsilon$, with the following properties:
\begin{itemize}
 \item [a)] $\psi\,(\theta,t) = \psi_1(\theta,t)$ for $(\theta,t) \in S^2 \times (-1/\varepsilon,0.25/\varepsilon)$,
\item [b)] $\psi\, (\theta,t) = \psi_a(A\cdot \theta,t-\beta+1/\varepsilon)$ for $(\theta, t) \in S^2 \times (\beta-1.25/\varepsilon, \beta)$ and some isometry  $A$ of $(S^2,d\theta^2)$,
\item[c)] there exists  a continuous path of metrics $\mu \in [0,1] \mapsto g_\mu$ 
\end{itemize}
\end{lem}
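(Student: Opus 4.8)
The plan is to induct on the length $a$ of the chain, using Lemma \ref{interpolation.lemma.1} to weld each neck onto its successor. For $a=2$ one applies Lemma \ref{interpolation.lemma.1} directly to $N_1$ and $N_2$: the center $x_2$ lies in $\Lambda$ (since $s_{N_1}(x_2)=0.9/\varepsilon$ and $s_{N_2}(x_2)=0$ are both in $(-0.95/\varepsilon,0.95/\varepsilon)$) and $g(\d/\d s_{N_1},\d/\d s_{N_2})>0$ by condition (1). The one hypothesis of Lemma \ref{interpolation.lemma.1} not literally present here is that every embedded $2$-sphere separates $M$; it enters its proof only to furnish a disjointness of a left portion of $N_1$ from a right portion of $N_2$, and for intersecting necks this follows instead from the near-alignment of their product structures over the whole intersection (Proposition A.11 of \cite{MORGANTIAN07}). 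The point to get right is which $z\in\Lambda$ to feed into Lemma \ref{interpolation.lemma.1}: not $z=x_2$, but a point in the interior of the overlap $N_1\cap N_2$, e.g.\ with $s_{N_2}(z)\approx -0.4/\varepsilon$ and hence $s_{N_1}(z)\approx 0.5/\varepsilon$ (such a point exists because $s_{N_1}-s_{N_2}$ is nearly constant equal to $0.9/\varepsilon$ on the overlap). Then $\beta=1/\varepsilon-s_{N_2}(z)+s_{N_1}(z)\approx 1.9/\varepsilon>1.5/\varepsilon$, and the windows $(-1/\varepsilon,\,s_{N_1}(z)-0.025/\varepsilon)$ and $(\beta+s_{N_2}(z)-0.975/\varepsilon,\beta)$ appearing in Lemma \ref{interpolation.lemma.1}(a),(b),(c) then contain $(-1/\varepsilon,0.25/\varepsilon)$ and $(\beta-1.25/\varepsilon,\beta)$ respectively, which yields (a),(b),(c) for $a=2$.

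For the inductive step, assume the result for chains of length $a-1$. The truncation $\{N_1,\dots,N_{a-1}\}$ is again a structured chain, so it supplies $\psi':S^2\times(-1/\varepsilon,\beta')\g\bigcup_{i=1}^{a-1}N_i$ with $\beta'>1.5/\varepsilon$ and properties (a)--(c); by (b), after an isometry of $(S^2,d\theta^2)$ and a shift of the $t$-coordinate, $\psi'$ equals $\psi_{a-1}$ on the $\psi_{a-1}$-coordinate window $(-0.25/\varepsilon,1/\varepsilon)$. Separately, applying Lemma \ref{interpolation.lemma.1} to $N_{a-1}$ and $N_a$ at a point $z'$ in the interior of $N_{a-1}\cap N_a$ with $s_{N_a}(z')\approx -0.4/\varepsilon$ (so $s_{N_{a-1}}(z')\approx 0.5/\varepsilon$) yields $\psi'':S^2\times(-1/\varepsilon,\beta'')\g N_{a-1}\cup N_a$, which equals $\psi_{a-1}$ on a window $S^2\times(-1/\varepsilon,c)$ with $c\approx 0.475/\varepsilon$, equals $\psi_a$ (up to isometry and shift) on $S^2\times(\beta''-1.25/\varepsilon,\beta'')$, and has $\beta''\approx 1.9/\varepsilon$. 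On the nonempty window where both $\psi'$ and $\psi''$ reduce to $\psi_{a-1}$ the two maps agree, so after matching coordinates they glue to $\psi:S^2\times(-1/\varepsilon,\beta)\g\bigcup_{i=1}^a N_i$ with $\beta=\beta'+0.9/\varepsilon>1.5/\varepsilon$; property (a) is inherited from $\psi'$ (the gluing region sits well to the right of the window $(-1/\varepsilon,0.25/\varepsilon)$) and property (b) from $\psi''$.

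Property (c) comes out of the same gluing. The induction supplies a positive-scalar-curvature path $g'_\mu$ on $S^2\times(-1/\varepsilon,\beta')$ equal to $(1-\mu)\psi^*(g)+\mu\, h_1^2 g_{cyl}$ near the left end and to $(1-\mu)\psi^*(g)+\mu\, h_{a-1}^2 g_{cyl}$ on $S^2\times(\beta'-1.25/\varepsilon,\beta')$, while Lemma \ref{interpolation.lemma.1}(c) for $N_{a-1},N_a$ supplies $g''_\mu$ equal to $(1-\mu)\psi^*(g)+\mu\, h_{a-1}^2 g_{cyl}$ near $\psi''$'s left end and to $(1-\mu)\psi^*(g)+\mu\, h_a^2 g_{cyl}$ on $S^2\times(\beta''-1.25/\varepsilon,\beta'')$. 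On the gluing window both are the \emph{same} linear homotopy $(1-\mu)\psi^*(g)+\mu\, h_{a-1}^2 g_{cyl}$, so the piecewise-defined $g_\mu$ is continuous, has positive scalar curvature, ends rotationally symmetric, and restricts to the two prescribed linear homotopies near $-1/\varepsilon$ and near $\beta$.

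The step I expect to be the main obstacle is showing that the pieced-together $\psi$ is a diffeomorphism \emph{onto} the full union $\bigcup_{i=1}^a N_i$, i.e.\ globally injective with the right image — locally each weld is a small perturbation of the identity and hence an embedding, but one must rule out the chain coming back to meet an earlier neck. I would carry this as an extra clause in the induction: that $\bigcup_{i=1}^k N_i$ is an embedded tube $\cong S^2\times(\mathrm{interval})$ parametrized by $\psi^{(k)}$, with the left-hand quarter of the combined neck equal to that of $N_1$. The non-reentry is then enforced by condition (2) of a structured chain ($N_i$ avoids the left-hand quarter of $N_1$ for all $i>1$), by the orientation conditions $g(\d/\d s_{N_i},\d/\d s_{N_{i+1}})>0$, and by the observation that the backward half of each $N_{i+1}$ is swallowed by $N_i$, so that $N_{i+1}$ cannot reach past $N_i$ into the earlier part of the tube. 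The remaining work is bookkeeping of the coordinate windows — quarter-necks, $O(1)$-wide transition regions, and the $0.9/\varepsilon$ shifts between consecutive necks — which goes through provided $\varepsilon_1$ is chosen small enough.
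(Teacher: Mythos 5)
Your proposal is correct and takes essentially the same route as the paper: the paper's proof likewise applies Lemma \ref{interpolation.lemma.1} to each consecutive pair $N_i,N_{i+1}$ at a point $z_i\in s_{N_i}^{-1}(0.5/\varepsilon)$, notes that each interpolation happens inside the intersection of the right half of $N_i$ with the left half of $N_{i+1}$ (so the left half of $N_i$ keeps its parametrization while the right half of $N_{i+1}$ becomes $\psi_{i+1}$ composed with an isometry of $g_{cyl}$), and matches the resulting diffeomorphisms and linear homotopies together exactly as you do. One small caution: your base-case claim that Proposition A.11 alone substitutes for the separating-sphere hypothesis of Lemma \ref{interpolation.lemma.1} is too quick, since alignment on the overlap does not by itself prevent the far end of $N_2$ from looping back onto the far-left of $N_1$ (think of $S^2\times S^1$); it is condition (2) of a structured chain, which you do invoke in your final paragraph, that rules this out, in line with how the lemma is only ever applied in the paper in settings where the re-entry cannot occur.
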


\begin{proof} If one applies the Lemma \ref{interpolation.lemma.1} to the necks $N_i$ and $N_{i+1}$, with 
$z_i \in s_{N_i}^{-1}(0.5/\varepsilon)$,
 the interpolation takes place in a region contained in the 
intersection of the right-hand half of $N_i$ and the left-hand half of $N_{i+1}$. The left-hand half of $N_i$ stays parametrized
by $\psi_i$, while the right-hand half of $N_{i+1}$ becomes parametrized by the composition of $\psi_{i+1}$ with an isometry of
$(S^2 \times \re, g_{cyl})$. It is then clear that the diffeomorphisms given by Lemma \ref{interpolation.lemma.1} can be matched together. The proof follows easily from Lemma \ref{interpolation.lemma.1}.
\end{proof}


\section{Surgery}\label{surgery}

In this section we will describe the basic properties of the surgery process. For more details
see \cite{PERELMAN03A} and Chapter 13 of \cite{MORGANTIAN07} (compare \cite{CAOZHU06} and \cite{KLEINERLOTT08}). 

The {\it standard initial metric}  is a  complete metric $g_{std}$ on $\re^3$ with the following properties:
\begin{itemize}
 \item [1)] $g_{std}$ has nonnegative sectional curvature,
\item[2)] $g_{std}$ is rotationally symmetric, i.e., invariant under the usual $SO(3)$-action on $\re^3$,
\item[3)] there exists $A_0 > 0$ such that $(\re^3 \setminus B(0,A_0),g_{std})$ is isometric to the cylindrical metric
of scalar curvature one on $S^2 \times (-\infty,4]$,
\item[4)] there exists $r_0 > 0$ such that $(B(0,r_0),g_{std})$ is isometric to a ball of radius $r_0$ inside  a 3-sphere of radius 2,
\item[5)] there exists $\beta > 0$ such that the scalar curvature satisfies $ R_{g_{std}}(x) \geq \beta$ for every $x \in \re^3$.
\end{itemize}

\begin{figure}
\begin{center}
\resizebox{7cm}{!}{\input 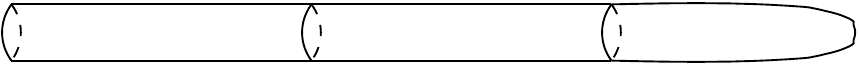_t} 
\caption{The standard initial metric.}
\end{center}
\end{figure}

The fixed point of the action of $SO(3)$ (the origin $0 \in \re^3$)  is called the {\it tip} of the standard initial metric. Let $s': \re^3 \setminus B(0,A_0) \g (-\infty,4]$ be the projection onto
the second factor through the isometry $\psi':S^2 \times (-\infty,4] \g \re^3 \setminus B(0,A_0)$ mentioned in the item (3) above. We extend it to a map $s': \re^3 \g (-\infty,4+A_0]$
by $s'(x)= 4+A_0-d_{g_{std}}(x,p)$. This map is an isometry along each radial geodesic emanating from $p$. It is smooth except
at $p$, and $s'(p)=4+A_0$. The pre-images of $s'$ are round 2-spheres.

Let $h$ be a Riemannian metric on $S^2 \times (-4,4)$ such that $h$ is within $\varepsilon$ of the cylindrical metric $ds^2+d\theta^2$
in the $C^{[1/\varepsilon]}$-topology. The smooth manifold
$\mathcal{S}$ is obtained by gluing together $S^2 \times (-4,4)$ and $B(p,A_0+4)$ and identifying
$(x,s)$ with $\psi'(x,s)$ for all $x\in S^2$ and $s \in (0,4)$.

Define
$$
f_\varepsilon(s) = 
\left\{
\begin{array}{lcr}
0 &\ {\rm if } \ &s \leq 0\\
C\,\varepsilon\, e^{-q/s} &\ {\rm if } \ &s > 0,
\end{array}
\right.
$$
where $C$ and $q$ will be chosen later independently of $\varepsilon$.

Let $\alpha:[1,2] \g [0,1]$ and $\beta:[4+A_0-r_0,4+A_0] \g [0,1]$ be cutoff functions such that $\alpha$ is identically 1 near 1 and
identically 0 near 2, while $\beta$ is identically 1 near $4+A_0-r_0$ and identically 0 near $4+A_0$. Set $\lambda = \sqrt{1-\varepsilon}$.

The metric $h_{surg,\varepsilon}$  is defined on $\mathcal{S}$  by 
$$
\left\{
\begin{array}{lcr}
e^{-2f_\varepsilon(s)}h &\ {\rm on } \ &s^{-1}((-4,1])\\
e^{-2f_\varepsilon(s)}\Big(\alpha(s)\,h + (1-\alpha(s))\lambda\, g_{std}\Big) &\ {\rm on } \ &s^{-1}([1,2])\\
e^{-2f_\varepsilon(s)}\lambda\,  g_{std} &\ {\rm on } \ &s^{-1}([2,A_{r_0}])\\
\Big(\beta(s)e^{-2f_\varepsilon(s)} + (1-\beta(s))e^{-2f_\varepsilon(4+A_0)}\Big)\lambda\, g_{std} &\ {\rm on } \ &s^{-1}([A_{r_0},A']),
\end{array}
\right.
$$
where $A_{r_0}=4+A_0-r_0$ and $A'=A_0+4$. Notice that $h_{surg,\varepsilon}$ coincides with $h$ on $s^{-1}((-4,0])$. It is
also clear that $h_{surg,\varepsilon}$ and $g_{std}$ get arbitrarily close as we consider $\varepsilon$-necks with $\varepsilon \g 0$. The process of going from $(S^2 \times (-4,4),h)$ to $(\mathcal{S},h_{surg,\varepsilon})$ will be  called {\it surgery}
(or {\it $\varepsilon$-surgery}).

\begin{figure}
\begin{center}
\input 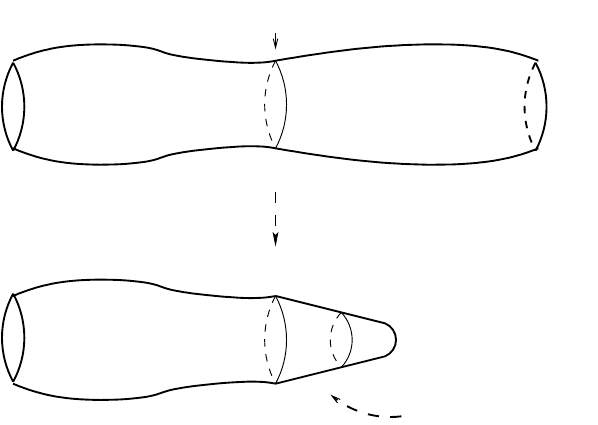_t
\caption{Neck surgery.}
\end{center}
\end{figure}

The next theorem collects some of the properties proved  in Chapter 13 of \cite{MORGANTIAN07}.
\begin{thm}
There are constants $C,q < \infty$, and $0 < \varepsilon_2 \leq \varepsilon_1$, such that the following hold for $h_{surg}=h_{surg,\varepsilon}$, if $h$ is within $\varepsilon$ of the cylindrical
metric $g_{cyl}$ in the $C^{[1/\varepsilon]}$-topology, $0<\varepsilon \leq \varepsilon_2$:
\begin{itemize}
 \item [a)]  the restriction of $h_{surg}$ to $s^{-1}([1,4+A_0])$ has positive sectional curvature,
\item[b)] the scalar curvature of $h_{surg}$  satisfies $R_{h_{surg}} \geq R_h$ on $s^{-1}((-4, 1])$,
\item[c)] the smallest eigenvalue of the curvature operator $Rm_{h_{surg}}$ is greater than or equal to the smallest eigenvalue of $Rm_h$ at
any point in $s^{-1}((-4, 1])$.
\end{itemize}
\end{thm}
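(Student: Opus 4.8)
The plan is to verify the three assertions by inspecting the explicit definition of $h_{surg,\varepsilon}$ region by region, using two principles: first, the curvature operator (hence scalar curvature) is continuous in the metric in the $C^2$ topology, so properties that hold strictly for $g_{std}$ (or the cylinder) persist under a sufficiently small $C^{[1/\varepsilon]}$-perturbation; second, the conformal change by $e^{-2f_\varepsilon(s)}$ with $f_\varepsilon$ as small as we like (uniformly in $C^2$, by choosing $\varepsilon_2$ small and controlling $C,q$) perturbs curvature by a controllably small amount. Throughout, $s^{-1}((-4,1])$ is the ``neck'' part where $h_{surg}$ is just $e^{-2f_\varepsilon(s)}h$ (and coincides with $h$ on $s^{-1}((-4,0])$ since $f_\varepsilon\equiv 0$ there), while $s^{-1}([1,4+A_0])$ is the ``cap'' part built from $\lambda\, g_{std}$ with cutoff interpolations.

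For part (a), on $s^{-1}([2,A_{r_0}])$ the metric is $e^{-2f_\varepsilon(s)}\lambda\, g_{std}$; by property (4) of the standard initial metric this region, before the conformal factor, is (up to scale) a piece of a round $3$-sphere of radius $2$, which has positive sectional curvature, and property (1) gives nonnegativity elsewhere on $g_{std}$. The genuinely delicate spots are the two transition zones $s^{-1}([1,2])$ and $s^{-1}([A_{r_0},A'])$, where a cutoff interpolation is performed; here one must check that the interpolating metrics $\alpha(s)h+(1-\alpha(s))\lambda\, g_{std}$, etc., still have positive sectional curvature. The point is that on $s^{-1}([1,2])$ both $\lambda\, g_{std}$ and (after rescaling) $h$ are $C^{[1/\varepsilon]}$-close to the round metric of radius $2$ restricted to that annulus — $h$ because it is $\varepsilon$-close to the cylinder and the cylinder is close to that spherical piece in the relevant range once one is at $s$-parameter of order $1$ away from the cylinder's end — so the convex combination is a small perturbation of a positively curved metric, and $f_\varepsilon$ is $C^2$-small; the constants $C,q$ are fixed first so that the $f_\varepsilon$-derivatives are dominated, and then $\varepsilon_2$ is taken small enough that the perturbation from $h$ is harmless. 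On $s^{-1}([A_{r_0},A'])$ near the tip the metric is a positive-scalar multiple (varying in $s$) of $\lambda\, g_{std}$, i.e. conformal to $g_{std}$ by a function of $s$ alone that is $C^2$-small as $\varepsilon\to 0$, so positivity of sectional curvature is preserved there as well.

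For parts (b) and (c), on $s^{-1}((-4,1])$ we have exactly $h_{surg}=e^{-2f_\varepsilon(s)}h$, a conformal change of $h$ by $e^{-2\phi}$ with $\phi=f_\varepsilon\circ s$. The standard transformation law for the scalar curvature under a conformal change (the same formula used in the proof of Proposition \ref{conformal.deformations}, namely $R_{u^{4/(n-2)}g}=u^{-(n+2)/(n-2)}(-\tfrac{4(n-1)}{n-2}\Delta_g u+R_g u)$ with $n=3$ and $u=e^{-\phi/2}$) shows that $R_{h_{surg}}-R_h$ is a sum of terms each carrying at least one derivative of $\phi$, with coefficients bounded in terms of $h$; since $\phi=f_\varepsilon$ is monotone and convex in the relevant sense and, crucially, $R_h$ itself is bounded below by a positive multiple of $1$ (being $\varepsilon$-close to the cylinder of scalar curvature one), choosing $C,q$ appropriately makes the correction terms of the right sign, giving $R_{h_{surg}}\ge R_h$. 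Part (c) is the same computation carried out at the level of the full curvature operator: the Weitzenb\"ock-type formula for how $Rm$ changes under a conformal factor $e^{-2\phi}$ with $\phi=\phi(s)$ a function of the neck coordinate alone produces, besides the rescaling, a correction controlled by $\phi',\phi''$, and one checks (this is the content of the estimates in Chapter 13 of \cite{MORGANTIAN07}) that with $C,q$ fixed as above the correction tensor is nonnegative, so the lowest eigenvalue of $Rm_{h_{surg}}$ dominates that of $Rm_h$ pointwise on $s^{-1}((-4,1])$.

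The main obstacle is the interplay of constants in the transition region $s^{-1}([1,2])$: one must fix $C$ and $q$ once and for all so that the $f_\varepsilon$-correction terms in the curvature computations are dominated, and only then shrink $\varepsilon_2$ so that the $\varepsilon$-closeness of $h$ to the cylinder makes the convex-combination metric a genuine small perturbation of a positively curved model; verifying that this ordering of quantifiers can actually be achieved — i.e. that the needed inequalities are uniform in the right way — is where the real work lies, and it is precisely what is carried out in detail in Chapter 13 of \cite{MORGANTIAN07}, which we invoke.
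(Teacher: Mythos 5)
The paper gives no proof of this theorem at all: it is stated as a collection of properties "proved in Chapter 13 of \cite{MORGANTIAN07}", so your final deferral to that chapter is exactly the move the paper makes. The problem is that the sketch you wrap around that citation misrepresents, for part (a), why the result is true, and the reasoning you offer would fail if taken at face value. You assert that on $s^{-1}([2,A_{r_0}])$ the metric before the conformal factor is, by property (4), a piece of the round sphere of radius $2$; but property (4) concerns only the ball $B(0,r_0)$, i.e.\ the region $s^{-1}([A_{r_0},A'])$ near the tip. On all of $s^{-1}([1,4])$ the standard initial metric is \emph{exactly} cylindrical (property (3)), so both $h$ and $\lambda\,g_{std}$ there are close to the cylinder, not to any positively curved model, and your claim that "the cylinder is close to that spherical piece in the relevant range" is false: the cylinder has vanishing sectional curvature on every $2$-plane containing the axial direction. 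Consequently the convex combination $\alpha(s)h+(1-\alpha(s))\lambda\,g_{std}$ on $s^{-1}([1,2])$, and the metric $\lambda\,g_{std}$ on $s^{-1}([2,4])$, are \emph{not} small perturbations of positively curved metrics, and positivity of sectional curvature cannot be obtained by arguing that the factor $e^{-2f_\varepsilon}$ is a harmless $C^2$-small perturbation. The logic is the reverse: the conformal factor is the \emph{source} of the positivity. The function $f_\varepsilon(s)=C\varepsilon e^{-q/s}$ is engineered (this is the content of the Chapter 13 computations) so that its first and second derivatives have the right signs and relative sizes to bend the almost-cylindrical metric inward and push the zero (or $O(\varepsilon)$-negative) sectional curvatures strictly positive, with the $\varepsilon$-error from $h$ and the quadratically small $(f_\varepsilon')^2$ terms dominated by the linear-in-$C\varepsilon$ gain from $f_\varepsilon''$. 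If $f_\varepsilon$ could really be ignored as you suggest, the metric on $s^{-1}([1,2])$ would remain approximately cylindrical and (a) would simply be false there.

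Your discussion of (b) and (c) is closer to the actual mechanism --- on $s^{-1}((-4,0])$ there is nothing to prove since $f_\varepsilon\equiv 0$, and on $(0,1]$ the conformal transformation law with the sign structure of $f_\varepsilon$ gives the monotonicity of $R$ and of the lowest eigenvalue of the curvature operator --- but even there the decisive point is again the sign and size of $f_\varepsilon''$ relative to $(f_\varepsilon')^2$ and to the $\varepsilon$-deviation of $h$ from the cylinder, not generic smallness of the conformal factor. So the correct summary is: the statement is an import from \cite{MORGANTIAN07}, your citation of it is legitimate and matches the paper, but the heuristic justification you supply for (a) rests on a misreading of where $g_{std}$ is round and on inverting the roles of the perturbation ($h$ close to the cylinder) and the curvature-producing term ($e^{-2f_\varepsilon}$).
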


Therefore
\begin{cor}\label{positive.curvature.surgery}
The metric $h_{surg,\varepsilon}$ has positive scalar curvature. If $h$ has positive sectional curvature, so does $h_{surg,\varepsilon}$.
\end{cor}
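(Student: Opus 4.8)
The plan is to read off both statements directly from the theorem that immediately precedes the corollary, after observing that the manifold $\mathcal{S}$ is covered by the two regions $s^{-1}((-4,1])$ and $s^{-1}([1,4+A_0])$, on each of which that theorem supplies curvature control. Throughout we work under the standing hypotheses of the theorem, namely that $h$ is within $\varepsilon$ of $g_{cyl}$ in the $C^{[1/\varepsilon]}$-topology with $0<\varepsilon\leq\varepsilon_2$.

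For the scalar curvature assertion I would argue region by region. On $s^{-1}([1,4+A_0])$, part (a) of the theorem gives that $h_{surg}$ has positive sectional curvature, hence a fortiori positive scalar curvature. On $s^{-1}((-4,1])$, part (b) gives $R_{h_{surg}}\geq R_h$, so it suffices to check that $R_h>0$ there: since $h$ is within $\varepsilon$ of $g_{cyl}=ds^2+d\theta^2$ in the $C^{[1/\varepsilon]}$-topology, since $R_{g_{cyl}}\equiv 1$, and since scalar curvature is a second-order expression in the metric, shrinking $\varepsilon_2$ if necessary so that $[1/\varepsilon]\geq 2$ forces $R_h$ to be close to $1$, in particular $R_h>0$. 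As the two regions cover $\mathcal{S}$, this proves $R_{h_{surg,\varepsilon}}>0$.

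For the second assertion, suppose $h$ has positive sectional curvature. On $s^{-1}([1,4+A_0])$ we again invoke part (a). On $s^{-1}((-4,1])$ the point to notice is that in dimension three every element of $\Lambda^2$ is decomposable, so positivity of the sectional curvature is equivalent to positivity of the curvature operator; hence the smallest eigenvalue of $Rm_h$ is positive on that region, and part (c) of the theorem then forces the smallest eigenvalue of $Rm_{h_{surg}}$ to be positive there as well, i.e. $h_{surg}$ has positive curvature operator, hence positive sectional curvature, on $s^{-1}((-4,1])$. Combining the two regions completes the proof. There is essentially no obstacle here, everything being an immediate consequence of the quoted theorem; the only step that calls for a comment is this three-dimensional equivalence between positive sectional curvature and positivity of the curvature operator, which is precisely what makes part (c) applicable in the second assertion.
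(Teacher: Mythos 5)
Your proposal is correct and is essentially the paper's own argument: the corollary is stated there as an immediate consequence of the preceding theorem, exactly via parts (a), (b), (c) on the two regions $s^{-1}((-4,1])$ and $s^{-1}([1,4+A_0])$ covering $\mathcal{S}$. The one detail you make explicit that the paper leaves tacit is the three-dimensional equivalence of positive sectional curvature with positivity of the curvature operator (every $2$-vector in dimension three being decomposable), which is indeed what makes part (c) yield the second assertion.
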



Notice that if  $0<\varepsilon\leq \tilde{\varepsilon} \leq \varepsilon_2$, and $\varepsilon_\mu = (1-\mu)\, \varepsilon + \mu\, \tilde{\varepsilon}$, then
$\mu \in [0,1] \mapsto h_{surg,\varepsilon_\mu}$ is a continuous family of positive scalar curvature metrics on $\mathcal{S}$  which all coincide with $h$
on $s^{-1}((-4,0))$. 


We will need a lemma to deform surgery caps.
\begin{lem}\label{deforming.surgery}
 Let $h$ be a Riemannian metric on $S^2 \times (-4,4)$ such that $h$ is within $\varepsilon$ of the cylindrical metric $ds^2+d\theta^2$
in the $C^{[1/\varepsilon]}$-topology, $0 < \varepsilon \leq \varepsilon_2$. 
  Then there exists a continuous path of metrics $\mu \in [0,1] \mapsto h_\mu'$ of positive scalar curvature on $\mathcal{S}$ with $h_0'=h_{surg,\varepsilon}$, $h_1'$ rotationally symmetric, and such that
 it restricts to  the linear homotopy 
$h_\mu'=(1-\mu)\, h+\mu\, g_{cyl}$ on $s^{-1}((-4,0))$ for all $\mu \in [0,1]$.
\end{lem}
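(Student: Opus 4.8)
The plan is to decompose the deformation into stages, following the block structure of the definition of $h_{surg,\varepsilon}$ on $\mathcal{S} = \big(S^2 \times (-4,4)\big) \cup B(p,A_0+4)$. First I would deform the conformal factor $e^{-2f_\varepsilon(s)}$ away. Since $f_\varepsilon \equiv 0$ on $s^{-1}((-\infty,0])$, the linear homotopy $\mu \mapsto e^{-2(1-\mu)f_\varepsilon(s)} h_{surg,\varepsilon}$ is constant on $s^{-1}((-4,0))$, and because $h_{surg,\varepsilon}$ already has positive scalar curvature and $f_\varepsilon$ is $C^0$-small together with all its derivatives (this is where $C\varepsilon$ in the definition of $f_\varepsilon$ is used, with $\varepsilon \le \varepsilon_2$), the perturbation stays in $\mathcal{R}_+$. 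After this stage the metric is $h$ on $s^{-1}((-4,1])$, equals $\lambda g_{std}$ on $s^{-1}([2, A'])$ (now including the tip region, since the remaining $\beta(s)$-interpolation between $e^{-2f_\varepsilon}$ and $e^{-2f_\varepsilon(4+A_0)}$ has collapsed), and interpolates linearly between $h$ and $\lambda g_{std}$ on $s^{-1}([1,2])$.

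Next I would handle the cylindrical part $s^{-1}((-4,1])$. Here the metric is exactly $h$, which by hypothesis is $\varepsilon$-close to $g_{cyl}$ in $C^{[1/\varepsilon]}$, so the linear homotopy $(1-\mu)h + \mu g_{cyl}$ has positive scalar curvature throughout, by the convexity argument of Proposition \ref{conformal.deformations} (or rather by direct continuity of the scalar curvature functional together with $\varepsilon$-smallness — here one only needs $C^2$-closeness). This is precisely the linear homotopy demanded in the statement, so it matches the required boundary behaviour on $s^{-1}((-4,0))$ automatically. The slightly delicate point is to carry out this deformation on the overlap interval $s^{-1}([1,2])$ \emph{simultaneously} with a deformation of the $\lambda g_{std}$ end: on $[1,2]$ I would use the linear homotopy in $\mu$ of the pair $(h, g_{cyl})$ on the first slot and keep $\lambda g_{std}$ on the second, i.e.\ deform $\alpha(s)\big((1-\mu)h + \mu g_{cyl}\big) + (1-\alpha(s))\lambda g_{std}$; positivity on this compact collar follows again from $C^2$-closeness of $h$ to $g_{cyl}$ and of $\lambda g_{std}$ to its own round structure in the neck region $s \in [2,4]$ where $g_{std}$ is exactly cylindrical of scalar curvature one.

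After these stages the metric is rotationally symmetric on $s^{-1}([1, A'])$ (being built from $\lambda g_{std}$ and the round $g_{cyl}$, both $SO(3)$-invariant), but on $s^{-1}((-4,1])$ it is the round cylinder $g_{cyl}$, which is already rotationally symmetric, and on $s^{-1}([1,2])$ it is a rotationally symmetric interpolation. Hence the whole metric is now rotationally symmetric with $w \equiv 1$ near the $s = -4$ end (where it equals $g_{cyl}$), and Proposition \ref{symmetric.deformation} applies to finish: it produces a path of rotationally symmetric positive scalar curvature metrics ending at $dr^2 + d\theta^2$ while staying fixed near the boundary $s^{-1}((-4,0))$. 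Concatenating the four stages and reparametrizing $\mu$ gives the desired path, with $h_0' = h_{surg,\varepsilon}$, $h_1'$ rotationally symmetric, and agreement with $(1-\mu)h + \mu g_{cyl}$ on $s^{-1}((-4,0))$ throughout (the first stage is constant there, the second is exactly this homotopy, and the third and fourth are constant there). I expect the main obstacle to be the bookkeeping on the overlap collar $s^{-1}([1,2])$: one must check that the simultaneous deformation of the two slots of the convex combination never leaves $\mathcal{R}_+$ and that it glues smoothly to the purely cylindrical deformation on the left and the $\lambda g_{std}$ piece on the right — but since every metric involved is a fixed $C^2$-small perturbation of a cylinder of positive scalar curvature, this reduces to the openness of the positive-scalar-curvature condition on a compact set.
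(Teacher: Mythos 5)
There are two genuine problems with your construction. First, the concatenation of stages does not deliver the boundary behaviour the lemma asserts. The statement requires that \emph{at every parameter} $\mu$ the metric on $s^{-1}((-4,0))$ be exactly $(1-\mu)\,h+\mu\,g_{cyl}$. After concatenating your four stages and reparametrizing, the restriction to $s^{-1}((-4,0))$ is constant equal to $h$ while you remove the conformal factor, then runs through a time-rescaled linear homotopy, then is constant equal to $g_{cyl}$; this is a reparametrized version of the linear homotopy, not the linear homotopy at parameter $\mu$. The distinction is not cosmetic: the lemma is used (in the proof of Proposition \ref{canonical.neighborhood}) to extend, parameter by parameter, deformations on the necks which restrict to linear homotopies near the surgery spheres into the attached caps, and a cap path whose restriction is constant while the neck metric is moving will not glue. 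Your closing claim of ``agreement with $(1-\mu)h+\mu g_{cyl}$ throughout'' is therefore false for the path as you built it (it could be repaired by running all stages simultaneously in $\mu$, but then your construction essentially collapses to the paper's). Second, your final stage is wrong: $\mathcal{S}$ is the capped manifold, diffeomorphic to a ball, not to $S^2\times(a,b)$, so Proposition \ref{symmetric.deformation} does not apply to it and no path of metrics on $\mathcal{S}$ can end at $dr^2+d\theta^2$. Fortunately that stage is also unnecessary, since the lemma only asks that the endpoint be rotationally symmetric, which you already have once the cylindrical part has been deformed to $g_{cyl}$.

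For comparison, the paper's proof is a one-liner that avoids both issues: set $h_\mu=(1-\mu)h+\mu\,g_{cyl}$ and define $h_\mu'=(h_\mu)_{surg,\varepsilon}$. Each $h_\mu$ is within $\varepsilon$ of $g_{cyl}$ in the $C^{[1/\varepsilon]}$-topology (the difference is $(1-\mu)(h-g_{cyl})$), so the surgery theorem gives positive scalar curvature for every $\mu$ with the fixed threshold $\varepsilon_2$; the surgery construction leaves the metric unchanged on $s^{-1}((-4,0])$, so the restriction there is exactly the required linear homotopy at each $\mu$; and $(g_{cyl})_{surg,\varepsilon}$ is rotationally symmetric because $f_\varepsilon$ depends only on $s$ and $g_{std}$ is $SO(3)$-invariant, so there is no need to strip off the conformal factor at all. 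Your detour through removing $e^{-2f_\varepsilon}$ also quietly requires quantitative smallness of $\varepsilon$ relative to the scalar curvature lower bounds of $g_{cyl}$ and $\lambda g_{std}$, which is not literally what $\varepsilon_2$ was chosen for, whereas invoking the surgery theorem on $h_\mu$ gives positivity with no extra estimate. (Also, as written your first-stage formula $e^{-2(1-\mu)f_\varepsilon}h_{surg,\varepsilon}$ does not equal $h_{surg,\varepsilon}$ at $\mu=0$; you mean to multiply by $e^{2\mu f_\varepsilon}$.)
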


\begin{proof} If  $h_\mu=(1-\mu)\, h+ \mu\, g_{cyl}$, define
$$
h_\mu' = (h_\mu)_{surg,{\varepsilon}}
$$
for $\mu \in [0,1]$. It is clear from the definition   that $(g_{cyl})_{surg,\varepsilon}$ is rotationally symmetric.
\end{proof}



\section{Connected sums}\label{connected.sums}

In this section we will  discuss the connected sum construction of positive scalar curvature metrics due to Gromov and Lawson (see \cite{GROMOVLAWSON80}).  We will then prove some deformation results  which will be used later.

Let $(M^n,g)$ be a Riemannian manifold of positive scalar curvature. Given $p \in M$,  $\{e_k\} \subset T_pM$ an orthonormal basis,   and $r_0 >0$, we
will define a positive scalar curvature  metric $g'$ on $B_{r_0}(p)\setminus \{p\}$ 
that coincides with $g$ near the boundary
$\d B_{r_0}(p)$, and such that $(B_{r_2}(p) \setminus \{p\},g')$ is isometric to a half-cylinder for some $r_2>0$. 


The construction uses a carefully chosen planar curve $\gamma \subset \re^2$. We  identify $B_{r_0}(p)$ with $B_{r_0}(0) \subset \re^n$
through the choice of $\{e_k\}$ and  exponential normal coordinates. It follows from \cite{GROMOVLAWSON80} that,
 given $0< r_0 \leq \min\{ \frac12 {\rm inj}_M(p), 1\}$, a positive lower bound $\delta$ for the scalar curvature of $g$,   and an upper bound 
$1/\eta>0$ for
the $C^2$ norm of $g$ in exponential coordinates about $p$,  there exists a planar curve $\gamma=\gamma(r_0,\delta,\eta)$ with the following properties:
\begin{itemize}
 \item [1)] the image of $\gamma$ is contained in the region $\{(r,t):r\geq 0, t \geq 0\}$,
\item [2)] the image of $\gamma$ contains  the horizontal half-line $r \geq r_1, t=0$  for some $0<r_1 < r_0$,
\item[3)] the image of $\gamma$ contains  the vertical half-line $r=r_2,t \geq t_2$  for some  $0<r_2 <r_1$ and $t_2 >0$,
\item[4)] the  induced metric on $M' = \{(x,t): (|x|,t) \in \gamma\}$ as a submanifold of  the Riemannian product $B_{r_0}(p) \times \re$ has positive scalar curvature.
\end{itemize}

Since  $r_2$ is  small,  the induced metric  on the tubular piece $r=r_2, t \geq t_2$
is a perturbation of the cylindrical metric on $S^{n-1}_{r_2}(0) \times \re$, where $S^{n-1}_{r_2}(0) \subset \re^n$ is the standard sphere
of radius $r_2$. We can  slightly modify it with a cut-off function to achieve a positive scalar curvature metric $g'$ of the form
$g_{ij}(x,t)\, dx_i\, dx_j + dt^2$, which coincides with the original metric near $t_2$ and such that it is isometric to 
 $S^{n-1}_{r_2}(0) \times [t_3,\infty)$ for some $t_3>t_2$ and $t \geq t_3$.

Let $(M^n_1,g_1)$ and $(M^n_2,g_2)$ be compact manifolds of positive scalar curvature. Given  $p_1 \in M_1$, $p_2 \in M_2$, and
orthonormal bases $\{e_k\} \subset T_{p_1}M_1$, $\{\overline{e}_k\} \subset T_{p_2}M_2$, there exist 
$0 < r_0 \leq  \min\{\frac12 {\rm inj}_{M_1}, \frac12  {\rm inj}_{M_2},1\}$, $\delta >0$, and $\eta>0$ so that the previous construction
with $\gamma=\gamma(r_0,\delta,\eta)$ 
applies to both manifolds.  We can glue together the manifolds $(B_{r_0}(p_1)\setminus \{p_1\},g_1')$ and
 $(B_{r_0}(p_2)\setminus \{p_2\},g_2')$ along the spheres where $t=t_3+1$, with reverse orientations. The result is a  
 positive scalar curvature metric $g_1 \# g_2$ , depending only on $g_1$, $g_2$,  and the  choice of parameters,  on the connected sum $M_1 \# M_2$.
 
 \begin{figure}
\begin{center}
\input 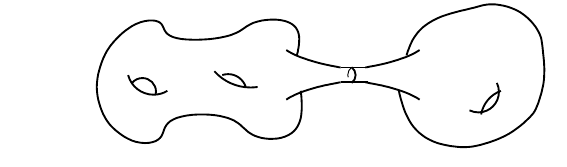_t
\caption{Gromov-Lawson connected sum.}
\end{center}
\end{figure}

For the same reasons this construction can be applied to families:
\begin{prop}\label{connected.sum.continuity}
Let $\mu \in [0,1] \mapsto g_{i,\mu}$ be  continuous paths of positive scalar curvature metrics  on the compact
manifolds $M^n_i$, $i=1,2$. Given continuous choices of points $\mu \in [0,1] \mapsto p_{i,\mu} \in M_i$, and  of orthonormal bases $\mu \in [0,1] \mapsto \{e_k^{(i)}(\mu)\}$
of $(T_{p_{i,\mu}}M_i,g_{i,\mu})$, $i=1,2$, there exist $r_0>0$, $\delta>0$, and $\eta>0$   such that 
the positive scalar curvature connected sums  
$$
(g_{1,\mu} \# g_{2,\mu})_{\mu \in [0,1]},
$$ constructed with $\gamma=\gamma(r_0,\delta,\eta)$ and $\{e_k^{(i)}(\mu)\}$ at $p_{i,\mu}$, form a continuous path on $M_1 \# M_2$. 
These metrics are such that $g_{1,\mu} \# g_{2,\mu} = g_{1,\mu}$ on $M_1 \setminus B_{r_0}(p_{1,\mu})$, and $g_{1,\mu} \# g_{2,\mu} = g_{2,\mu}$ on $M_2 \setminus B_{r_0}(p_{2,\mu})$
for every $\mu \in [0,1]$.
\end{prop}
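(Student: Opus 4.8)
\textbf{Proof proposal for Proposition \ref{connected.sum.continuity}.}

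The plan is to observe that the Gromov--Lawson construction recalled immediately above depends on its inputs in a way that is continuous once the structural parameters $r_0, \delta, \eta$ are fixed, and that such parameters can indeed be chosen uniformly along a compact family. First I would fix the compact parameter interval $[0,1]$ and run the following uniformization step. For each $\mu$ and each $i=1,2$, the metric $g_{i,\mu}$ has positive scalar curvature, hence $\delta_\mu := \frac12 \min_{M_i} \min\{R_{g_{1,\mu}}, R_{g_{2,\mu}}\} > 0$; likewise the injectivity radii $\mathrm{inj}_{M_i}(p_{i,\mu})$ are positive, and the $C^2$ norms of $g_{i,\mu}$ in $g_{i,\mu}$-exponential coordinates about $p_{i,\mu}$ are finite. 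By continuity of $\mu \mapsto g_{i,\mu}$ in the $C^\infty$ topology and of $\mu \mapsto (p_{i,\mu}, \{e^{(i)}_k(\mu)\})$, together with compactness of $[0,1]$, these quantities are bounded away from the degenerate values uniformly in $\mu$: there exist $r_0 > 0$ with $r_0 \leq \min\{\tfrac12\mathrm{inj}_{M_1}, \tfrac12\mathrm{inj}_{M_2}, 1\}$ for all $\mu$, a single $\delta > 0$ that is a positive lower bound for the scalar curvatures of all the $g_{i,\mu}$, and a single $\eta > 0$ with $1/\eta$ an upper bound for the $C^2$ norms of all the $g_{i,\mu}$ in exponential coordinates about $p_{i,\mu}$. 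With these choices, the curve $\gamma = \gamma(r_0,\delta,\eta)$ of \cite{GROMOVLAWSON80} is a \emph{single} planar curve that works simultaneously for every $(i,\mu)$, so properties (1)--(4) of the bending construction hold uniformly.

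Next I would track the continuity through each stage of the construction. Identifying $B_{r_0}(p_{i,\mu})$ with $B_{r_0}(0) \subset \re^n$ via the $g_{i,\mu}$-exponential chart determined by $\{e^{(i)}_k(\mu)\}$, the metric $g_{i,\mu}$ is represented by a family of metric tensors on the fixed ball $B_{r_0}(0)$ depending continuously on $\mu$; the hypersurface $M'_{i} = \{(x,t) : (|x|,t) \in \gamma\} \subset B_{r_0}(0) \times \re$ and the cutoff modification producing the exact half-cylinder end $S^{n-1}_{r_2}(0) \times [t_3,\infty)$ are the same for all $\mu$, and the induced (then cut-off-modified) metric $g'_{i,\mu}$ on $B_{r_0}(p_{i,\mu}) \setminus \{p_{i,\mu}\}$ depends continuously on $\mu$ because it is built by fixed geometric operations (pullback under a fixed embedding, multiplication by fixed cutoff functions) from the continuously varying $g_{i,\mu}$. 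Crucially, each $g'_{i,\mu}$ agrees with $g_{i,\mu}$ outside $B_{r_0}(p_{i,\mu})$ and is an exact round cylinder $S^{n-1}_{r_2}(0) \times [t_3, \infty)$ near the gluing locus, with the \emph{same} $r_2$ and $t_3$ for all $\mu$ (these depend only on $\gamma$). Therefore the gluing of $(B_{r_0}(p_{1,\mu}) \setminus \{p_{1,\mu}\}, g'_{1,\mu})$ to $(B_{r_0}(p_{2,\mu}) \setminus \{p_{2,\mu}\}, g'_{2,\mu})$ along the fixed spheres $\{t = t_3 + 1\}$ with reversed orientations is carried out by an identification that is independent of $\mu$, and the resulting metric $g_{1,\mu} \# g_{2,\mu}$ on the fixed smooth manifold $M_1 \# M_2$ is continuous in $\mu$ in the $C^\infty$ topology. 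Positivity of the scalar curvature of each $g_{1,\mu} \# g_{2,\mu}$ is exactly the content of \cite{GROMOVLAWSON80} applied with our uniform parameters, and the stated localization properties $g_{1,\mu} \# g_{2,\mu} = g_{i,\mu}$ away from $B_{r_0}(p_{i,\mu})$ hold by construction.

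The only genuine point requiring care — and the step I expect to be the main obstacle — is the uniform choice of $r_0, \delta, \eta$ and, relatedly, making precise the sense in which $\mu \mapsto g'_{i,\mu}$ is continuous near the puncture, where the metrics are not uniformly bounded. For the first, one must invoke that $\mu \mapsto g_{i,\mu}$ being continuous into $C^\infty(M_i)$ forces the relevant norms and radii to vary continuously in $\mu$, hence attain extremes on the compact interval $[0,1]$; this is a standard compactness argument but should be stated. For the second, the resolution is that away from the glued region $g'_{i,\mu}$ equals $g_{i,\mu}$, while on the half-cylinder end it equals a fixed cylindrical metric, and on the compact bent region in between it is a $C^\infty$-continuous family because the defining embedding and cutoffs are fixed; since $M_1 \# M_2$ never actually contains the puncture, no issue arises there. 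Once these points are dispatched, everything else is the observation that "the construction uses only $\gamma$, the base points, and the frames" — which is the remark the paper already makes preceding the statement — upgraded verbatim to families.
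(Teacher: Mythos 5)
Your proposal is correct and is essentially the argument the paper intends: the paper gives no separate proof of Proposition \ref{connected.sum.continuity}, merely remarking that ``for the same reasons this construction can be applied to families,'' and your two steps (a uniform choice of $r_0,\delta,\eta$ --- hence a single curve $\gamma$ --- via continuity of the data and compactness of $[0,1]$, followed by stage-by-stage continuity of the bending, cutoff, and gluing operations) are exactly what that remark is taking for granted. The only point worth making explicit is the identification of the $\mu$-dependent connected sums (the punctures $p_{i,\mu}$ move) with a single model of $M_1\#M_2$, e.g.\ via a continuous family of ambient diffeomorphisms carrying $p_{i,0}$ to $p_{i,\mu}$ and the frames along, after which your continuity statement holds verbatim.
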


It is also possible to perform connected sums of $M$ to itself. This procedure is referred to as {\it attaching a handle}
to $M$. 

{\bf Remark:} If the metric $g$ has constant positive sectional curvature on $B_{r_0}(p)$, then the
metric $g'$ is rotationally symmetric on $B_{r_0}(p) \setminus \{0\}$. Therefore any connected sum of round spheres (or space forms, in general) 
is locally conformally flat. 

Suppose that $B$ and $B'$ are two disjoint copies of the constant curvature ball $B_{r_0}(p)$, and let  $\gamma_1$ and $\gamma_2$ be planar curves as above. It is not difficult to see that, by increasing the length of one of the cylindrical necks, the resulting metrics on $B \#_{\gamma_1} B'$ and $B \#_{\gamma_2} B'$ are pointwise conformal to each other and coincide near the ends. Here the connected sums are performed at the centers.  It follows from the conformal method, like
in Proposition \ref{conformal.deformations}, that the connected sum $B \#_{\gamma_1} B'$ can be deformed, through metrics of positive scalar curvature, into $B \#_{\gamma_2} B'$, without changing the metric  near the boundary. 



Let $h$ be a Riemannian metric on $S^2 \times (-4,4)$  which is within $\varepsilon$ of the cylindrical
metric $g_{cyl}=ds^2+ d\theta^2$ in the $C^{[1/\varepsilon]}$-topology. Let $(\mathcal{S}^{-},h_{surg,\varepsilon}^-)$ and $(\mathcal{S}^{+},h_{surg,\varepsilon}^+)$ be
the manifolds obtained from this neck by doing $\varepsilon$-surgery along the central sphere $S^2 \times \{0\}$, and 
gluing standard caps to both $S^2 \times (-4,0]$ and $S^2 \times [0,4)$, respectively. Their tips $p^-,p^+$ have neighborhoods which are isometric to a geodesic ball   in some standard sphere. We can  apply the Gromov-Lawson connected sum construction to these balls at  $p^-,p^+$,  with some  choice of
parameters $r_0, \delta$, and $\eta$, obtaining a connected sum $(\mathcal{S}^- \#\, \mathcal{S}^+,h_{surg,\varepsilon}^- \# h_{surg,\varepsilon}^+)$ of
positive scalar curvature.

\begin{lem}\label{cylinder.deformation}
Let $g_{cyl}=ds^2 + d\theta^2 $ be the cylindrical metric on $S^2 \times (-4,4)$. Given
$0< \varepsilon \leq \varepsilon_2$, the manifold $(\mathcal{S}^- \#\, \mathcal{S}^+,(g_{cyl})_{surg,\varepsilon}^- \# (g_{cyl})_{surg,\varepsilon}^+)$ can
be continuously deformed into $(S^2 \times (-4,4),g_{cyl})$ through metrics of
positive scalar curvature which all coincide with $g_{cyl}$ on the regions $s^{-1}((-4,-1))$
and $s^{-1}((1,4))$.
\end{lem}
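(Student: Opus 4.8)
The plan is to reduce Lemma~\ref{cylinder.deformation} to the conformal flatness of the pieces involved, exactly in the spirit of the remark preceding it. First I would observe that $(g_{cyl})_{surg,\varepsilon}^-$ and $(g_{cyl})_{surg,\varepsilon}^+$ are rotationally symmetric by Lemma~\ref{deforming.surgery} (taking $h=g_{cyl}$), and that they are built from pieces of the standard initial metric, which has constant positive sectional curvature on a ball around its tip by property (4) of $g_{std}$. Consequently, by the Remark in Section~\ref{connected.sums}, when we perform the Gromov--Lawson connected sum of $\mathcal{S}^-$ to $\mathcal{S}^+$ at the tips $p^-,p^+$ — where the metrics agree with a geodesic ball in a round sphere — the resulting metric $(g_{cyl})_{surg,\varepsilon}^- \#\, (g_{cyl})_{surg,\varepsilon}^+$ is rotationally symmetric on the two cap regions and on the neck, hence locally conformally flat on all of $\mathcal{S}^- \#\, \mathcal{S}^+$.

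Next I would exploit the explicit warped-product structure. Write the connected-sum metric in the form $dr^2 + w^2(r)\,d\theta^2$ along the noncompact cylindrical direction of $S^2$ coordinates, where by construction $w\equiv 1$ on $s^{-1}((-4,-1))$ and on $s^{-1}((1,4))$, since $h_{surg,\varepsilon}$ coincides with $h=g_{cyl}$ on $s^{-1}((-4,0])$ and the glued standard caps are isometric to cylinders of scalar curvature one away from their tips by property (3) of $g_{std}$. After smoothing the two tips of the double cap into a single warped factor (the manifold $\mathcal{S}^- \#\, \mathcal{S}^+$ is diffeomorphic to $S^2 \times (-4,4)$, so $w$ can be taken to be a smooth positive function of a single coordinate $r$ running over an interval, with $w\equiv 1$ near both ends), the metric is precisely of the form treated in Proposition~\ref{symmetric.deformation} with $n=3$. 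Applying that proposition gives a continuous path $\mu\in[0,1]\mapsto g_\mu$ of rotationally symmetric positive scalar curvature metrics from the connected-sum metric to $dr^2+d\theta^2$, fixed equal to the original metric (hence equal to $g_{cyl}$) on the two end regions $s^{-1}((-4,-1))$ and $s^{-1}((1,4))$. Finally one reparametrizes the $r$-interval back to $(-4,4)$ to identify $dr^2+d\theta^2$ with $g_{cyl}$ on $S^2\times(-4,4)$; since the reparametrization is the identity near the ends, the homotopy still fixes the metric there.

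The main obstacle, and the step requiring the most care, is verifying that the connected-sum metric really can be put globally in the rotationally symmetric warped-product form $dr^2+w^2(r)\,d\theta^2$ with $w\equiv 1$ on the required end intervals and $w$ smooth across the junction of the two standard caps. This amounts to checking that: (i) the Gromov--Lawson gluing of the two tips, carried out between two isometric round balls, can be arranged $SO(3)$-equivariantly so that it produces a rotationally symmetric metric rather than merely a conformally flat one — this is exactly the content of the Remark, but one must confirm that the gluing curve $\gamma$ respects the symmetry; and (ii) the coordinate $s$ used for the surgery necks and the radial coordinate of the standard caps can be matched into a single smooth coordinate $r$ on the whole cylinder. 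Both are straightforward given the explicit rotationally symmetric formulas for $h_{surg,\varepsilon}$ and $g_{std}$, but they are where the argument must be written carefully; once they are granted, Proposition~\ref{symmetric.deformation} does the rest.
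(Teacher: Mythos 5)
Your proposal is correct and follows essentially the same route as the paper: observe that the two surgered metrics are rotationally symmetric, use the fact that the Gromov--Lawson sum at the constant-curvature tips preserves rotational symmetry to write the result as a warped product $dr^2+w^2(r)\,d\theta^2$ on a longer cylinder with $w\equiv 1$ near the ends, reparametrize the interval back to $(-4,4)$ by a diffeomorphism that is a translation near the ends, and apply Proposition \ref{symmetric.deformation}. The point you flag as needing care (the $SO(3)$-equivariance of the gluing) is exactly what the Remark in Section \ref{connected.sums} supplies, so the argument closes as you describe.
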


\begin{proof} Notice that $(g_{cyl})_{surg,\varepsilon}^-$ and $(g_{cyl})_{surg,\varepsilon}^+$ are rotationally symmetric. Since the connected sum is performed to small constant curvature balls centered at the tips, we can identify
 $(\mathcal{S}^- \#\, \mathcal{S}^+,(g_{cyl})_{surg,\varepsilon}^- \# (g_{cyl})_{surg,\varepsilon}^+)$  with
$$
\Big(S^2 \times (-4-a,a+4), dr^2 + w^2(r)\,d\theta^2\Big)
$$
 for some $a>0$, and $w(r)>0$ with   $w(r)=1$ if $r \in (-4-a,-a) \cup (a,a+4)$. 

Let $\beta:(-4-a,a+4) \g (-4,4)$ be a diffeomorphism such that $\beta(r)=r+a$ if $r \in (-4-a,-1-a)$, and
$\beta(r)=r-a$ if $r \in (a+1,a+4)$. The result follows by applying the Proposition
\ref{symmetric.deformation} to $(id,\beta)_*(dr^2 + w^2(r)\,d\theta^2)$ on $S^2 \times (-4,4)$.
\end{proof}

\begin{lem}\label{undoing.surgery}
 There exists $0<\varepsilon_3 \leq  \varepsilon_2$ such that the following is true. 
 If $h$ is within $\varepsilon$ of the cylindrical
metric $g_{cyl}$ on $S^2 \times (-4,4)$  in the $C^{[1/\varepsilon]}$-topology,  $0<\varepsilon\leq \varepsilon_3$, then
  $(\mathcal{S}^- \#\, \mathcal{S}^+,h_{surg,\varepsilon}^- \# h_{surg,\varepsilon}^+)$ can be continuously deformed back into 
$(S^2 \times (-4,4),h)$ through positive scalar curvature metrics which all coincide 
with $h$ near the ends of $S^2 \times (-4,4)$.
\end{lem}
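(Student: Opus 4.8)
The plan is to reduce the statement to the cylindrical case already proved in Lemma \ref{cylinder.deformation}, by deforming the input metric $h$ to $g_{cyl}$ through the surgery-and-reconnection construction. The one subtlety is that the deformation must never disturb the metric near the two ends, so the homotopy from $h$ to $g_{cyl}$ has to be localized around the central sphere $S^2\times\{0\}$.

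Fix a cutoff $\rho:(-4,4)\to[0,1]$ with $\rho\equiv 1$ on $(-\delta_0,\delta_0)$ and $\rho\equiv 0$ near $\pm 4$, where $\delta_0$ is large enough (say $\delta_0>2$) that the surgery construction reads off $h$ only through its restriction to $s^{-1}((-\delta_0,\delta_0))$. Set $h_\mu=(1-\mu\rho)\,h+\mu\rho\,g_{cyl}$ for $\mu\in[0,1]$. On $s^{-1}((-\delta_0,\delta_0))$ this is the honest convex combination $(1-\mu)h+\mu g_{cyl}$, hence within $\varepsilon$ of $g_{cyl}$ in $C^{[1/\varepsilon]}$ by convexity of that ball; off that region $h_\mu=g_{cyl}+(1-\mu\rho)(h-g_{cyl})$ is within $C\varepsilon$ of $g_{cyl}$ in $C^2$ for a constant $C$ depending only on $\rho$. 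So for $0<\varepsilon\le\varepsilon_3$ small enough, every $h_\mu$ has positive scalar curvature, the surgered caps $(h_\mu)_{surg,\varepsilon}^{\pm}$ on $\mathcal{S}^{\pm}$ are defined and have positive scalar curvature by Corollary \ref{positive.curvature.surgery} (the hypothesis of the surgery theorem constrains $h_\mu$ only near the central sphere, where $h_\mu$ is $\varepsilon$-close to $g_{cyl}$ in $C^{[1/\varepsilon]}$), they depend continuously on $\mu$, and they equal $h_\mu$ on the two collars. Since the tips $p^{\pm}$ sit in the region $s^{-1}([2,A'])$ on which the surgery metric is independent of $h$ and of $\mu$, Proposition \ref{connected.sum.continuity} applies with a single choice of connected-sum parameters and of frames at $p^{\pm}$, producing a continuous path $\mu\mapsto(h_\mu)_{surg,\varepsilon}^-\#(h_\mu)_{surg,\varepsilon}^+$ of positive scalar curvature metrics on $\mathcal{S}^-\#\mathcal{S}^+\cong S^2\times(-4,4)$, starting at $h_{surg,\varepsilon}^-\#h_{surg,\varepsilon}^+$ and coinciding with $h$ near both ends for every $\mu$ (because $h_\mu=h$ there).

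At $\mu=1$ the metric is built from $g_{cyl}$ near the central sphere, so it coincides with $(g_{cyl})_{surg,\varepsilon}^-\#(g_{cyl})_{surg,\varepsilon}^+$ on a sub-cylinder containing the entire surgery region together with some $g_{cyl}$-collar, and coincides with $h_1$ (which is $h$ near the ends and $g_{cyl}$ around the center) outside that sub-cylinder. Applying the deformation of Lemma \ref{cylinder.deformation} on that sub-cylinder --- it is supported away from the $g_{cyl}$-collars and hence extends by the identity to the rest of the manifold --- we deform, still keeping $h$ near the ends, to $h_1$. Finally $\mu\mapsto h_{1-\mu}$ is a path of positive scalar curvature metrics (same estimates as above) from $h_1$ to $h$ that is constant equal to $h$ near the ends. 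Concatenating the three stages gives the desired deformation of $(\mathcal{S}^-\#\mathcal{S}^+,h_{surg,\varepsilon}^-\#h_{surg,\varepsilon}^+)$ into $(S^2\times(-4,4),h)$ through positive scalar curvature metrics all equal to $h$ near the ends of $S^2\times(-4,4)$.

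The point requiring care --- and the main obstacle --- is keeping the metric literally equal to $h$ near the two ends throughout. A global convex homotopy to $g_{cyl}$ would move the end geometry, while a naive cut-off homotopy risks failing to be $\varepsilon$-close to $g_{cyl}$ in the $C^{[1/\varepsilon]}$-topology because of the high derivatives of the cutoff. Choosing $\rho\equiv 1$ on the fixed-size region that surgery actually reads resolves this: there surgery sees only an honest convex combination, which is genuinely $\varepsilon$-close to $g_{cyl}$ in $C^{[1/\varepsilon]}$, whereas on the transition annulus plain $C^2$-closeness suffices since no surgery is performed. Everything else is assembly on top of Corollary \ref{positive.curvature.surgery}, Proposition \ref{connected.sum.continuity}, and Lemma \ref{cylinder.deformation}.
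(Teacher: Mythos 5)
Your proposal is correct in outline and is essentially the paper's own scheme: cut off the homotopy from $h$ to $g_{cyl}$ so it is supported away from the ends, push the whole family through the surgery-plus-Gromov--Lawson construction, reduce at $\mu=1$ to the purely cylindrical case of Lemma \ref{cylinder.deformation}, and finally undo the cutoff homotopy; you also correctly single out the one real difficulty, namely that the cutoff ruins $\varepsilon$-closeness in the $C^{[1/\varepsilon]}$-topology. Where you genuinely diverge from the paper is in how that difficulty is handled. The paper inserts a preliminary stage: using the continuity of $\varepsilon'\mapsto h_{surg,\varepsilon'}$ (remark in Section \ref{surgery}) together with Proposition \ref{connected.sum.continuity}, it first deforms $h_{surg,\varepsilon}^-\#\,h_{surg,\varepsilon}^+$ into $h_{surg,\varepsilon_2}^-\#\,h_{surg,\varepsilon_2}^+$; from then on the cutoff family $h_\mu=(1-\eta(s)\mu)h+\eta(s)\mu\,g_{cyl}$ only needs to be within $\varepsilon_2$ of $g_{cyl}$ in the \emph{fixed} topology $C^{[1/\varepsilon_2]}$, which holds globally once $\varepsilon\le\varepsilon_3(\eta,\varepsilon_2)$, so the surgery theorem of Section \ref{surgery} and Corollary \ref{positive.curvature.surgery} apply exactly as stated. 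You instead keep the surgery parameter equal to $\varepsilon$ and argue that the $C^{[1/\varepsilon]}$-hypothesis is only needed where the construction actually modifies the metric (your region $\rho\equiv1$, where $h_\mu$ is an honest convex combination), plain $C^2$-closeness sufficing elsewhere since there the surgered metric is just $h_\mu$. That localization is in fact true --- the defining formula in Section \ref{surgery} alters the metric only on $s^{-1}([0,2])$ plus the standard cap, and the Morgan--Tian estimates behind properties (a)--(c) are pointwise --- but it is not contained in the statement of the theorem as quoted, whose hypothesis is global; strictly you are invoking a feature of the proof of a cited black box, and making it rigorous requires saying so (or adopting the paper's $\varepsilon\to\varepsilon_2$ stage, which exists precisely to avoid this). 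Your route buys a shorter argument, and since you never change the surgery parameter the metric near the tips is literally $\mu$-independent, so the connected sum needs essentially no continuity argument; the paper's route stays entirely within the stated hypotheses at the cost of one extra deformation stage. The remaining assembly --- positivity via Corollary \ref{positive.curvature.surgery}, the family of connected sums via Proposition \ref{connected.sum.continuity}, extending the deformation of Lemma \ref{cylinder.deformation} by the identity outside the sub-cylinder, and the final homotopy $\mu\mapsto h_{1-\mu}$ --- matches the paper's and is sound.
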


\begin{figure}
\begin{center}
\input 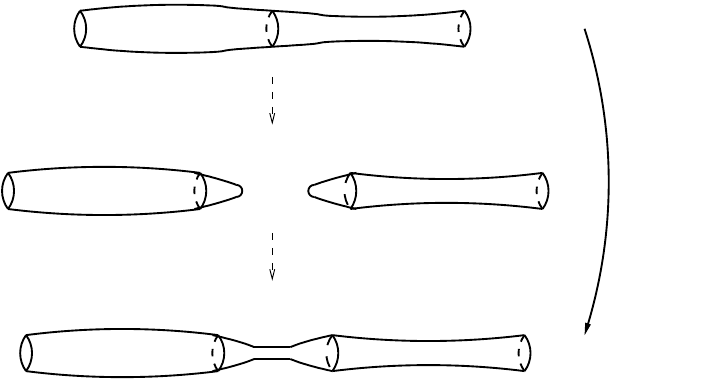_t
\caption{Reverting surgery.}
\end{center}
\end{figure}

\begin{proof} 
Suppose  $h$ is within $\varepsilon$ of the cylindrical
metric $g_{cyl}=ds^2 + d\theta^2 $ on $S^2 \times (-4,4)$  in the $C^{[1/\varepsilon]}$-topology. It follows from a remark in Section \ref{surgery} and the Proposition \ref{connected.sum.continuity} that the manifold $(\mathcal{S}^- \#\, \mathcal{S}^+,h_{surg,\varepsilon}^- \# h_{surg,\varepsilon}^+)$ can be continuously
deformed into $(\mathcal{S}^- \#\, \mathcal{S}^+,h_{surg,\varepsilon_2}^- \# h_{surg,\varepsilon_2}^+)$ through metrics of positive scalar curvature which all coincide 
with $h$ near the ends of $\mathcal{S}^- \#\, \mathcal{S}^+$.

Let $h_\mu = (1-\eta(s)\mu)h + \eta(s)\mu g_{cyl}$, $\mu \in [0,1]$, where $0 \leq \eta \leq 1$ is a cutoff function such that $\eta(s)$ is 
identically 1
if $|s| \leq 2$ and $\eta$ is identically zero if $|s| \geq 3$. Note that 
$h_\mu = h$ if $|s| \geq 3$ for all $\mu \in [0,1]$, and $h_1 = g_{cyl}$ if $s \in [-2,2]$. If $\varepsilon \leq \varepsilon_3$ with $\varepsilon_3$ sufficiently small, then $h_\mu$ is within $\varepsilon_2$ of the cylindrical
metric $g_{cyl}=ds^2 + d\theta^2 $ in the $C^{[1/\varepsilon_2]}$-topology for every $\mu \in [0,1]$. Therefore the manifold $(\mathcal{S}^- \#\, \mathcal{S}^+,h_{surg,\varepsilon_2}^- \# h_{surg,\varepsilon_2}^+)$ can be continuously deformed into $(\mathcal{S}^- \#\, \mathcal{S}^+,(h_1)_{surg,\varepsilon_2}^- \# (h_1)_{surg,\varepsilon_2}^+)$ through the
positive scalar curvature metrics $(h_\mu)_{surg,\varepsilon_2}^- \# (h_\mu)_{surg,\varepsilon_2}^+$, $\mu \in [0,1]$, which again coincide 
with $h$ near the ends of $\mathcal{S}^- \#\, \mathcal{S}^+$.

Since the metric $h_1$ coincides with  the cylindrical metric $g_{cyl}$ on the region
$s^{-1}([-2,2])$, it follows from Lemma \ref{cylinder.deformation} that the manifold
$$
(\mathcal{S}^- \#\, \mathcal{S}^+,(h_1)_{surg,\varepsilon_2}^- \# (h_1)_{surg,\varepsilon_2}^+)
$$
 can be continuously deformed into $(S^2\times (-4,4),h_1)$
through metrics of positive scalar curvature which equal  $h$ near the ends. The last stage of the deformation is given by  $\mu \in [0,1] \mapsto h_\mu$. 
\end{proof}


\section{Ricci flow with surgery}\label{section.ricci}

In this section we will present some of  the results about the Ricci flow with surgery. We refer the reader to
\cite{CAOZHU06}, \cite{KLEINERLOTT08}, \cite{MORGANTIAN07} and \cite{PERELMAN03A} for more details. Here we will follow more closely the exposition of J. Morgan and G. Tian (compare \cite{MORGANTIAN07}).

Throughout this section $C>0$ and $0<\varepsilon\leq \varepsilon_3$ will be fixed constants.

We will start by defining the several types of  canonical neighborhoods. The  definition of a $(C,\varepsilon)$-cap we give 
here is more restrictive than the one that can be found in the above references. That is due to the fact that we will need more geometric information. The additional assumption we make is in property (4) below: the caps either have positive sectional curvature or are small perturbations of one of two  standard
caps.  We will say more about that  in the end of this section. (Compare with Definition 69.1 of Kleiner and Lott  \cite{KLEINERLOTT08}, property (b)).

A {\it $(C, \varepsilon)$-cap} in a Riemannian manifold $(M^3,g)$ is an open submanifold $\mathcal{C} \subset M$ of positive scalar curvature, together with an open set $N \subset \mathcal{C}$, such that:
\begin{itemize}
 \item [1)] $\mathcal{C}$ is diffeomorphic to an open  3-ball or to  $\re P^3$ minus a ball,
\item[2)] $N$ is an $\varepsilon$-neck with compact complement in $\mathcal{C}$,
\item[3)] $\overline{Y}= \mathcal{C} \setminus N$ is a compact submanifold with boundary. The  interior $Y$ is called the {\it core} of $\mathcal{C}$. The boundary $\d \overline{Y}$ is a central 2-sphere of some $\varepsilon$-neck in $\mathcal{C}$,
\item[4)] $\mathcal{C}$ is of one of the following types:\\ \\ 
{\it type A:} $(\mathcal{C},g)$  has positive sectional curvature everywhere,\\ \\ 
{\it type B:} there exists  a diffeomorphism
$\varphi:(s')^{-1}((-3/\varepsilon, 4+A_0]) \g \mathcal{C} \subset M$ such that the metric $h^{-2}\, \varphi^*(g)$ is within $\varepsilon$ in the $C^{[1/\varepsilon]}$-topology of the  standard initial metric $g_{std}$, where $h=R_g(z)^{-1/2}$ for some $z \in (s')^{-1}(-2/\varepsilon)$,\\ \\
{\it type C:} there exists a double covering $\varphi:S^2 \times (-3/\varepsilon-4,3/\varepsilon+4) \g \mathcal{C}$ with $\varphi(-\theta,-t)=\varphi(\theta,t)$ and such that $h^{-2}\, \varphi^*(g)$ is within $\varepsilon$ of $ds^2+d\theta^2$ in the $C^{[1/\varepsilon]}$-topology, where $h=R_g(z)^{-1/2}$ for some $z \in \varphi(S^2 \times \{-2/\varepsilon-4\})$,
\item[5)] \begin{eqnarray*}
\sup_{x,y \in \mathcal{C}} R(x)/R(y) &<& C,\\
diam \, \mathcal{C} &<& C\, (\sup_{x \in \mathcal{C}} R(x))^{-1/2},\\
vol \, \mathcal{C} &<& C \,(\sup_{x \in \mathcal{C}} R(x))^{-3/2}.
\end{eqnarray*}
\end{itemize}

\begin{figure}
\begin{center}
\input 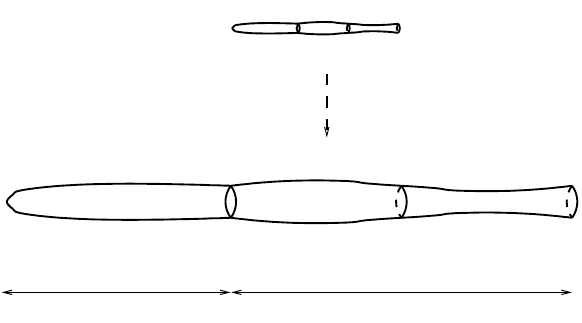_t
\caption{A $(C,\varepsilon)$-cap after scaling.}
\end{center}
\end{figure}


If a cap $\mathcal{C}$ is of type B, then it is diffeomorphic to a 3-ball and the neck structure 
of $N$ is given by 
$\psi(\theta,t) = \varphi(\theta, t-2/\varepsilon)$. If $\mathcal{C}$ is of type C,  then it is diffeomorphic to $\re P^3$ minus a ball and the neck structure of $N$ is given by $\psi(\theta,t)=\varphi(\theta, t-2/\varepsilon-4)$.

{\bf Remark:} Notice that if a cap $\mathcal{C}$ is of type B, then the metrics $(1-\mu) \varphi^*(g)+\mu\, h^2 \, g_{std}$ have positive scalar curvature for $\mu \in [0,1]$.

A {\it $C$-component} is a compact connected  Riemannian manifold $(M^3,g)$  such that:
\begin{itemize}
 \item [1)] $M$ is diffeomorphic to either $S^3$ or  $\re P^3$,
\item[2)] $(M, g)$ has positive sectional curvature,
\item[3)] $C^{-1}\, (\sup_{x \in M}R(x)) < \inf_\sigma K(\sigma)$, where $\sigma$ varies over all 2-planes of $TM$ and $K(\sigma)$ denotes
the sectional curvature,
\item[4)] $C^{-1} \sup_{x \in M} R(x)^{-1/2}<diam\, M < C \inf_{x \in M} R(x)^{-1/2}$.
\end{itemize}

An {\it $\varepsilon$-round component} is a compact connected Riemannian manifold $(M,g)$ such that there exist
a compact Riemannian manifold $(Z,\overline{g})$ of constant curvature 1, a constant $R>0$, and a diffeomorphism $\varphi: Z \g M$ such that the 
pullback $\varphi^*(Rg)$ is  within $\varepsilon$ of $\overline{g}$  in the $C^{[1/\varepsilon]}$-topology.

Given a Riemannian 3-manifold $(M,g)$, we say that  $x \in M$ has a $(C,\varepsilon)${-\it canonical neighborhood} $U \subset M$ if
one of the following holds:
\begin{itemize}
 \item [1)] $(U,g)$ is an $\varepsilon$-neck centered at $x$,
\item[2)] $(U,g)$ is a $(C,\varepsilon)$-cap whose core contains $x$,
\item[3)] $(U,g)$ is a $C$-component containing $x$,
\item[4)] $(U,g)$ is an $\varepsilon$-round component containing $x$.
\end{itemize}
Notice that  the definition of a $(C,\varepsilon)$-canonical neighborhood is scale invariant.

A Riemannian manifold $(M^3,g_0)$ is said to be {\it normalized}
if 
\begin{itemize}
 \item [1)] $|Rm_{g_0}(x)| \leq 1$ for every $x \in M$,
\item[2)] the volume of any ball of radius one in  $(M^3,g_0)$ is at least $\omega/2$, where $\omega$ denotes
the volume of the unit ball in $\re^3$.
\end{itemize}

The Ricci flow with surgery, with $(M^3,g_0)$ as initial condition, can be thought of as a sequence of standard Ricci flows $(M^3_i,g_i(t))$, each defined for $t \in [t_{i},t_{i+1})$
and becoming singular at $t=t_{i+1}$, where $0=t_0 < t_1 < \cdots < t_{i} < t_{i+1} < \cdots < \infty$ is a discrete set, $M_0=M$, and $g_0(0)=g_0$. The initial
condition $(M^3_{i},g_{i}(t_{i}))$ for each of these Ricci flows is a 
compact Riemannian manifold  obtained from the preceding  Ricci flow $(M^3_{i-1},g_{i-1}(t))_{t \in [t_{i-1},t_{i})}$ by a specific process
called surgery, which depends on some choice of parameters.

If the initial metric has positive scalar curvature, the Ricci flow with surgery becomes extinct at some finite time $T<\infty$. This will mean
that $T=t_{j+1}$ for some $j \geq 0$ and $M_{j+1}=\emptyset$. 

The surgery process depends on some parameters:

\begin{itemize}
\item [1)] the {\it canonical neighborhood parameters} ${\bf r}= r_0 \geq r_1 \geq r_2 \geq \cdots >0$, with
 $r_0=\varepsilon$,
\item [2)] the {\it surgery control parameters} $\Delta = \delta_0 \geq \delta_1 \geq \delta_2 \geq \cdots > 0$, with $\delta_0 \leq \frac16 \varepsilon$  sufficiently small.
\end{itemize}

We say that a Ricci flow $(M^3,g(t))$, $t \in [a,b)$, satisfies {\it the   $(C,\varepsilon)$-canonical neighborhood assumption} with parameter $r$ if every point $(x,t) \in M \times [a,b)$ with $R(x,t) \geq r^{-2}$ has a  $(C,\varepsilon)$-canonical neighborhood.

The Ricci flow with surgery is constructed so that $(M^3_i,g_i(t))_{t \in [t_i,t_{i+1})}$ satisfies the $(C, \varepsilon)$-canonical neighborhood assumption
with parameter $r_i$, for all $0 \leq i \leq j$.

  Let us now describe the surgery at time $t_{i+1}$ with parameters $\delta_i$.  Set $\rho_i = \delta_i \, r_i$, and define
$h = h(\rho_i,\delta_i)$ as in Theorem 11.31 of \cite{MORGANTIAN07}. We have $\rho_i \ll r_i$.

Define
$$
\Omega(t_{i+1}) = \{x \in M: \liminf_{t \g t_{i+1}} R(x,t) < +\infty\}.
$$
If $\Omega(t_{i+1})=\emptyset$, we terminate the flow at time $t_{i+1}$ and declare $M_{i+1}=\emptyset$. Suppose then that $\Omega(t_{i+1})$ is nonempty. It follows from the work of Perelman that the Ricci flow with surgery can be constructed so that (compare Theorem 11.19 in \cite{MORGANTIAN07})  $\Omega(t_{i+1}) \subset M$ is an open set on which  the metrics $g(t)$ converge, in the $C^\infty$ topology
over compact subsets, as $t \g t_{i+1}$, to a metric
$g_i(t_{i+1})$. The scalar curvature $R_{g_i(t_{i+1})}:\Omega(t_{i+1}) \g \re$ is  proper and bounded below.

Let
$$
\Omega_{\rho_i}(t_{i+1}) = \{x \in \Omega(t_{i+1}): R_{g_i(t_{i+1})}(x) \leq \rho_i^{-2}\}.
$$
The set $\Omega_{\rho_i}(t_{i+1}) \subset \Omega(t_{i+1})$ is compact, since $R_{g_i(t_{i+1})}$ is proper. There are finitely many components of $\Omega(t_{i+1})$ which contain points
of $\Omega_{\rho_i}(t_{i+1})$. Denote the union of such components by $\Omega^{big}(t_{i+1})$. Again we terminate the flow if
$\Omega^{big}(t_{i+1})=\emptyset$.

A {\it $2\varepsilon$-horn}  in $(\Omega(t_{i+1}),g_i(t_{i+1}))$ is an open set diffemorphic to $S^2 \times [0,1)$ such that:
\begin{itemize}
\item [1)] the embedding $\psi$ of $S^2 \times [0,1)$ into $\Omega(t_{i+1})$ is a proper map,
\item [2)] every point of the image of this map is the center of a $2\varepsilon$-neck in $\Omega(t_{i+1})$,
\item [3)] the image of the boundary $\psi(S^2 \times \{0\})$ is the central sphere of a $2\varepsilon$-neck in $\Omega(t_{i+1})$.
\end{itemize}

Perelman proved that the open set $\Omega^{big}(t_{i+1})$ contains a finite collection of disjoint $2\varepsilon$-horns $\mathcal{H}_1, \dots, \mathcal{H}_l$, with boundary contained in $\Omega_{\rho_i/2C}(t_{i+1})$, such
that the complement of the union of their interiors is a  compact 3-manifold with boundary which contains $\Omega_{\rho_i}(t_{i+1})$.  For each $1\leq k \leq l$, we can find a strong $\delta_i$-neck
centered at some  $y_k \in \mathcal{H}_k$ with $R_{g_i(t_{i+1})}(y_k)=h^{-2}$, and contained in $\mathcal{H}_k$. Let $S^2_k$ be the central sphere of this neck,  oriented so that the positive $s$-direction 
points toward the end of the horn. Let $\mathcal{H}_k^+$ be the unbounded complementary component of $S^2_k$ inside the horn. The {\it continuing region $C_{t_{i+1}}$ at time $t_{i+1}$} is then defined to be
the complement of $\bigsqcup_{k=1}^l \mathcal{H}_k^+$ in $\Omega^{big}(t_{i+1})$.

We can now do surgery on these $\delta_i$-necks as described in Section 3 of this paper, removing the positive end of the necks and replacing them by small perturbations of standard caps. The result is a compact Riemannian 3-manifold $(M_{i+1},G_{t_{i+1}})$, where  $M_{i+1} = C_{t_{i+1}} \cup_{\sqcup_k S^2_k} B_k$ and each $B_k$ is parametrized by  the ball of
radius $A_0+4$ around the tip of the standard initial metric. The Riemannian metric $G_{t_{i+1}}$ coincides with $g_i(t_{i+1})$ on $C_{t_{i+1}}$, while on each $B_k$ it is a surgery metric like in  Section \ref{surgery}, scaled by $h^{2}$.  Notice that this surgery process removes every component of $\Omega(t_{i+1})$ that does not intersect $\Omega_{\rho_i}(t_{i+1})$. We then say that  $(M_{i+1},G_{t_{i+1}})$ is obtained from the
standard Ricci flow $(M_i^3,g_i(t))_{t \in [t_i,t_{i+1})}$ by {\it doing surgery at the singular time $t_{i+1}$ with parameters $\delta_i$ and $r_i$}. It follows
from the properties of neck surgery that if the metrics $g_i(t)$ have positive scalar curvature, so does $G_{t_{i+1}}$. The Ricci flow $g_{i+1}(t)$ on $M_{i+1}$ is such that $g_{i+1}(t_{i+1})=G_{t_{i+1}}$.

The next result  collects the properties of the Ricci flow with surgery we will need (see Chapter 15 in \cite{MORGANTIAN07}).
\begin{thm}\label{ricci.flow.surgery}
Let $(M^3,g_0)$ be a normalized compact Riemannian manifold, of positive scalar curvature. Suppose there is no $\re P^2$ embedded with trivial normal bundle in $M$.  There exist sequences $r=\{r_i\},\Delta=\{\delta_i\}$, and  a Ricci flow with surgery $(M^3_i,g_i(t))_{t \in [t_i,t_{i+1})}$, $0 \leq i \leq j$, such that: 
\begin{itemize}
 \item [a)] $M_0=M$, and $g_0(0)=g_0$,
\item[b)]   the flow becomes extinct at time $T=t_{j+1}< \infty$,
\item[c)] the  Ricci flow $(M^3_i,g_i(t))_{t \in [t_i,t_{i+1})}$ satisfies the $(C, \varepsilon)$-canonical neighborhood assumption
with parameter $r_i$, for all $0 \leq i \leq j$,
\item[d)] the scalar curvature of $g_i(t)$ is positive, for all $0 \leq i \leq j$ and $t \in [t_i,t_{i+1})$,
\item[e)]  $(M^3_{i+1},g_{i+1}(t_{i+1}))$ is obtained from the  Ricci flow
$(M^3_i,g_i(t))$, $t \in [t_i,t_{i+1})$, by doing surgery at the singular time $t_{i+1}$ with  parameters $\delta_i$ and  $r_i$, for all $0 \leq i \leq j-1$.
\end{itemize}
 \end{thm}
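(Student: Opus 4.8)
The plan is to assemble the five assertions from Perelman's construction of the Ricci flow with surgery, as presented in Chapter 15 of \cite{MORGANTIAN07}, and then to verify separately the single point that is sharper than what appears there, namely the trichotomy of caps in property (4) of the definition of a $(C,\varepsilon)$-cap.

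First I would invoke the main existence theorem for the Ricci flow with surgery on a normalized compact $3$-manifold with no embedded $\re P^2$ of trivial normal bundle. This directly yields the sequences ${\bf r}=\{r_i\}$ of canonical neighborhood parameters and $\Delta=\{\delta_i\}$ of surgery control parameters, together with a Ricci flow with surgery $(M^3_i,g_i(t))_{t\in[t_i,t_{i+1})}$ satisfying (a), the canonical neighborhood assumption (c), and (e) by the very construction of the surgery process recalled earlier in this section. For (d) I would note that positivity of scalar curvature is preserved both under the smooth Ricci flow, by the maximum principle applied to (\ref{ricci.flow}), and under surgery, by Corollary \ref{positive.curvature.surgery} together with the fact that on the continuing region the metric is unchanged. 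For (b), I would recall that the pointwise bound $\min_M R_{g_i(t)}\geq(\tfrac{1}{R_0}-\tfrac{2}{3}t)^{-1}$, with $R_0=\min_M R_{g_0}>0$, is propagated by (\ref{ricci.flow}) and only improves across surgeries, since a surgery raises the minimum of the scalar curvature; hence no flow in the sequence survives past $t=\tfrac{3}{2R_0}$, which forces finite extinction: $T=t_{j+1}<\infty$ and $M_{j+1}=\emptyset$.

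The substantive point is property (4). Here I would argue that every $(C,\varepsilon)$-cap that arises as a canonical neighborhood is of one of the three declared types, separating according to its origin. A cap glued in at a surgery time is, by the construction of Section \ref{surgery}, an $h^2$-rescaling of the surgery metric $h_{surg,\varepsilon}$, which is $\varepsilon$-close in the $C^{[1/\varepsilon]}$-topology to the standard initial metric $g_{std}$; its restriction to $(s')^{-1}((-3/\varepsilon,4+A_0])$ is then a cap of type B. For a cap that appears in the flow away from surgery times, Perelman's structure theory for $\kappa$-solutions together with the canonical neighborhood theorem shows that, after rescaling, it either has positive sectional curvature everywhere --- type A --- or is arbitrarily $C^{[1/\varepsilon]}$-close to one of two fixed models: a rescaled standard cap, in which case it is diffeomorphic to a ball and of type B, or a rescaled quotient of a long round cylinder by an involution, in which case it is diffeomorphic to $\re P^3$ minus a ball and of type C. For the details of this sharpened conclusion I would refer to Definition 69.1(b) of \cite{KLEINERLOTT08} and to the cited presentations \cite{CAOZHU06}, \cite{MORGANTIAN07}, \cite{PERELMAN03A} of Perelman's work.

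The main obstacle --- indeed the only place where anything beyond careful bookkeeping against \cite{MORGANTIAN07} is needed --- is precisely this last step: checking that Perelman's construction can be run so that every cap ever appearing as a canonical neighborhood satisfies the more restrictive condition (4), with no case lost. This is not a difficulty of principle but one of extracting from the existing arguments the extra geometric rigidity already implicit in them, which will be the purpose of the remarks at the end of this section.
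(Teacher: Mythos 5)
Your proposal is correct and follows essentially the same route as the paper: the paper likewise treats this theorem as a collection of properties imported from Chapter 15 of \cite{MORGANTIAN07} (with (b) and (d) coming from the maximum principle and the scalar-curvature monotonicity of surgery), and its only substantive addition is exactly the point you isolate, namely that the caps may be taken of types $A$, $B$, or $C$, which it justifies as you do via the contradiction/rescaled-limit argument: limits are either $\kappa$-solutions (Perelman's classification, Theorem 9.93 of \cite{MORGANTIAN07}, giving caps of type $A$ or $C$) or close to the standard initial metric (giving caps of type $B$), in line with Definition 69.1 of \cite{KLEINERLOTT08}.
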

 
 The fact that we can suppose the caps are of one of  3 types - {\it A, B,} or {\it C} - deserves some explanation. This kind of information on the
geometry of the caps is not needed for topological applications - the proof of the Poincaré Conjecture, for example.

 The key point in the 
construction of the Ricci flow with surgery is to prove that a  Ricci flow with surgery defined on an interval $[0,T)$ and such that:
\begin{itemize}
\item [1)] the initial condition is normalized,
\item [2)] the curvature is pinched toward positive,
\item [3)] the canonical neighborhood assumption holds with parameter $r$,
\item [4)] it is $\kappa$-noncollapsed on scales $\leq \varepsilon$,
\end{itemize}
can be extended to an interval $[0,T')$, with $T' >T$ and all four conditions satisfied, perhaps with smaller parameters $r$ and $\kappa$.
 
 The existence of canonical neighborhoods around points of large scalar curvature in the extended flow is established by a contradiction argument,
 after conditions (1), (2), and (4) above are checked. 
If that is not the case,  sequences of Ricci flows with surgery are constructed based at points $p_i$ which violate the canonical neighborhood assumption with parameters $r_i \g 0$. The strong results of Perelman assure that we can take a limit, provided the flows are rescaled so that the
scalar curvature at $p_i$ becomes one. If the limit is a Ricci flow defined all the way back to $-\infty$, then it has to be a $\kappa$-solution. 
In case the limit is only partial, there will be a surgery region near $p_i$ in rescaled distance and time for large $i$. In any case the conclusion is that a neighborhood of $p_i$ either becomes close to a region in a $\kappa$-solution, or it becomes
close to a piece of the standard initial metric $(\re^3,g_{std})$. The standard initial metric  is covered by $\varepsilon$-necks and a $(C,\varepsilon)$-cap of type $B$. The qualitative description  of the $\kappa$-solutions by Perelman (see \cite{PERELMAN03A}) is summarized in Theorem 9.93 of \cite{MORGANTIAN07}.  We have the following possibilities for the
$\kappa$-solution  in the orientable case:
\begin{itemize}
\item [(i)] it is round,
\item [(ii)] it is a $C$-component,
\item [(iii)] it has positive sectional curvature and it is a union of $\varepsilon$-necks and $(C,\varepsilon)$-caps,
\item[(iv)] it is isometric to the cylinder $S^2 \times \re$,
\item[(v)]  it is a quotient of the cylinder $S^2 \times \re$ by the involution $\alpha(\theta,t)=\alpha(-\theta,-t)$. 
\end{itemize}
In all these cases the $\kappa$-solution is a union of canonical neighborhoods. The $(C,\varepsilon)$-caps appear in cases (iii) and 
(v): if (iii) holds then the caps are of type $A$, while if (v) holds the caps are of type $C$. This contradicts the nonexistence 
of canonical neighborhoods around $p_i$, where only caps of types $A$, $B$, or $C$  are considered.


\section{Proof of the Main Results}\label{proofs}

We will start by introducing the concept of a canonical metric.

 Let $h$ be the metric on the unit sphere $S^3$ induced by the standard inclusion $S^3 \subset \re^4$. Given $k,l\geq 0$, let $q_1,\dots,q_k,p_1,\dots,p_l,p_1',\dots,p_l' \in S^3$ be such that
$$
\{q_1,\dots,q_k,p_1,\dots,p_l,p_1',\dots,p_l'\}
$$ 
is a collection of $k+2l$  distinct points in $S^3$. If $\Gamma_1,\dots,\Gamma_k$ are finite subgroups of $SO(4)$ acting freely on $S^3$, let
$q_i' \in S^3/\Gamma_i$ endowed with the quotient metric $g_{\Gamma_i}$ of constant sectional curvature 1  for each $1 \leq i \leq k$. We will
apply the Gromov-Lawson construction to small balls of the same radius centered at $q_1,\dots,q_k,p_1,\dots,p_l,p_1',\dots,p_l'$ in $S^3$ and at $q_i'$ in $S^3/\Gamma_i$. For that we need also to
choose orthonormal bases at each of the points. The
boundaries of the cylindrical necks coming out of $p_j$ and $p_j'$ are identified to each other  with reverse orientations for every $1 \leq j\leq l$, while the same is done to the boundaries of the cylindrical necks coming out of $q_i$ and $q_i'$, for every $1\leq i \leq k$. The resulting manifold is diffeomorphic to a connected sum
$$
M^3 = S^3 \# (S^3/\Gamma_1) \# \dots \# (S^3/\Gamma_k) \# (S^2 \times S^1)
\# \dots \# (S^2 \times S^1),
$$ 
where $l\geq 0$ is the number of $S^2 \times S^1$ summands. Recall that the topology of
such a connected sum sometimes depends on the orientation of the bases chosen at the $q_i'$ (compare Hempel \cite{HEMPEL04}). 
The metric $\hat{g}$ obtained on $M$ from  such construction has positive scalar curvature.  It is also  locally conformally flat. We will refer to metrics isometric to a $\hat{g}$ as above as {\it canonical metrics}. The unit sphere to
which the construction is applied will be called {\it principal sphere}. 

 
 If $\hat{g}_1$ and $\hat{g}_2$ are canonical metrics on the same manifold $M$, then it follows by the uniqueness theorem of Milnor (see \cite{MILNOR62}) that $l_1=l_2$, $k_1=k_2$, and that, after some reordering, there exists an  orientation-preserving
 diffeomorphism  between $S^3/\Gamma_{i,1}$ and $S^3 /\Gamma_{i,2}$ for each $i$.   G. de Rham  proved in \cite{DERHAM50} that in this case there exists an orientation-preserving isometry between  $S^3/\Gamma_{i,1}$ and $S^3 /\Gamma_{i,2}$ for each $i$.
 Recall that the connected sums  $B \#_{\gamma_1} B'$ and $B \#_{\gamma_2} B'$ of constant curvature balls $B$ and $B'$ are isotopic to each other (with the metric unchanged near the ends), if $\gamma_1$ and $\gamma_2$ are planar curves like in Section \ref{connected.sums}. Since we can also move the base points and the orthonormal bases around, according to  Proposition \ref{connected.sum.continuity}, we conclude  that   different canonical metrics on $M$ are isotopic to each other, i.e., live in the same path-connected component of the moduli space
$\mathcal{R}_+(M)/{\rm Diff}(M)$. 


The next result concerns deformations of manifolds which are covered by  canonical neighborhoods.

\begin{prop}\label{canonical.neighborhood}
 Let $(M^3,g)$ be a compact orientable 3-manifold of positive scalar curvature such that  every point $x \in (M^3,g)$ has a 
$(C,\varepsilon)$-canonical neighborhood. Then $g$ is isotopic to  a canonical metric. Moreover, $M^3$ is diffeomorphic to  a space form $S^3/\Gamma$, $\re P^3 \# \re P^3$, or 
$S^2 \times S^1$.
\end{prop}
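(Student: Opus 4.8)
The plan is to follow the strategy laid out in the Outline (Section \ref{outline}), organizing the argument by the type of canonical neighborhood that covers $M$. First I would dispose of the ``rigid'' cases. If $(M,g)$ is a $C$-component or an $\varepsilon$-round component, then by the classification these have positive sectional curvature (the $\varepsilon$-round case after noting closeness to a round metric forces $Ric>0$, or directly using that it is $\varepsilon$-close to a spherical quotient), so Hamilton's theorem \cite{HAMILTON82} applies: the normalized Ricci flow starting at $(M,g)$ keeps positive sectional curvature and converges to a constant curvature metric on $S^3/\Gamma$. This gives an isotopy from $g$ to a round $S^3/\Gamma$. A round $S^3/\Gamma$ is locally conformally flat, so by Corollary \ref{uniformization} it is isotopic to a canonical metric on $S^3/\Gamma$ (the Gromov--Lawson sum of a round $S^3$ with a round $S^3/\Gamma$). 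This settles the cases where $M$ is covered by $C$-components or $\varepsilon$-round components.

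Next I would treat the remaining case: every $x\in M$ is either the center of an $\varepsilon$-neck or lies in the core of a $(C,\varepsilon)$-cap, with at least one cap present or else every point is a neck center. By Propositions A.21 and A.25 of \cite{MORGANTIAN07}, $M^3$ is diffeomorphic to $S^3$, $\re P^3$, $\re P^3\#\re P^3$, or $S^2\times S^1$, with $S^2\times S^1$ occurring precisely when every point is a neck center. This is where the bulk of the work sits, and I expect \textbf{this is the main obstacle}: one must produce an explicit isotopy, not just a diffeomorphism classification. Following the Outline, I would: (i) choose a maximal cap $\mathcal{C}$ by the volume-exhaustion argument (each disjoint quarter-neck contributes a definite amount of volume since $R_g$ is bounded below, $M$ compact); (ii) build a \emph{structured chain of $\varepsilon$-necks} $\{N_1'=N,\dots,N_a'\}$ emanating from the core side of $\mathcal{C}$, terminating in the core of a second cap $\tilde{\mathcal{C}}$, again by volume finiteness; (iii) verify the decomposition $M=\mathcal{C}\cup N_2'\cup\cdots\cup N_a'\cup\tilde{\mathcal{C}}$ using that embedded $2$-spheres separate $M$ (in the $S^3$, $\re P^3$, $\re P^3\#\re P^3$ cases) together with the alignment properties of intersecting $\varepsilon$-necks (Proposition A.11 of \cite{MORGANTIAN07}).

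Then I would run the deformation: apply Lemma \ref{interpolation.lemma.2} to the structured chain to get a single parametrization $\psi\colon S^2\times(-1/\varepsilon,\beta)\to\bigcup N_i'$ together with a path of positive scalar curvature metrics making $\bigcup N_i'$ rotationally symmetric and restricting to linear homotopies near the two ends. Perform $\varepsilon$-surgery (Section \ref{surgery}, Corollary \ref{positive.curvature.surgery}) along the central spheres $S_1'$ and $S_a'$, gluing standard caps; this splits $M$ into three pieces $(\mathcal{S}_1,g_1)$, $(P,g_P)$, $(\mathcal{S}_2,g_2)$. Because surgery preserves positive sectional curvature, if $\mathcal{C}$ and $\tilde{\mathcal{C}}$ are of type $A$ the two outer pieces have $Ric>0$ and flow to round metrics; for caps of type $B$ or $C$ one uses instead that the cap is $\varepsilon$-close to a rotationally symmetric model (the standard initial metric, or its $\mathbb{Z}_2$-quotient), so the linear homotopy to the symmetric model has positive scalar curvature and Proposition \ref{symmetric.deformation} plus Corollary \ref{uniformization} (local conformal flatness of rotationally symmetric metrics) finishes the isotopy to a round $S^3$ or round $\re P^3$. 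Using Lemma \ref{deforming.surgery} the middle piece $(P,g_P)$ is deformed through positive scalar curvature metrics to a rotationally symmetric one, hence locally conformally flat, hence isotopic to a round sphere via Corollary \ref{uniformization}. Finally, since Gromov--Lawson connected sums work continuously in families (Proposition \ref{connected.sum.continuity}), and since the surgeries can be undone by a local conformal deformation at the neck regions (Lemma \ref{undoing.surgery}), the original $(M,g)$ is isotopic to a Gromov--Lawson connected sum of round spheres and round spherical quotients with handles --- that is, to a canonical metric; and Lemma \ref{cylinder.deformation} is what reduces connected sums of spheres back to a single sphere. The $S^2\times S^1$ case is handled as in the Outline: pick a neck $N$ whose central sphere does not separate $M$, do surgery and cap off to get $(S^3,g_{\rm surg})$ of positive scalar curvature still covered by canonical neighborhoods, apply the $S^3$ case just proved, then undo the surgery — which here re-attaches a handle — using the family version of Gromov--Lawson, concluding that $(S^2\times S^1,g)$ is isotopic to the Gromov--Lawson self-sum of a round $S^3$, the definition of a canonical metric on $S^2\times S^1$. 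Together with the uniqueness-of-canonical-metric discussion (Milnor \cite{MILNOR62}, de Rham \cite{DERHAM50}, Proposition \ref{connected.sum.continuity}, and the $B\#_{\gamma_1}B'\simeq B\#_{\gamma_2}B'$ isotopy), this proves the Proposition, the diffeomorphism statement being exactly Propositions A.21/A.25 plus the $S^3/\Gamma$ output of the rigid cases.
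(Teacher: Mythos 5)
Your proposal follows essentially the same route as the paper: it is the Section \ref{outline} strategy with the correct lemmas attached, and most of it matches the actual proof --- rigid components via Hamilton's theorem and Corollary \ref{uniformization} (plus de Rham's theorem to identify the resulting space form with the round factor of the canonical metric), a maximal cap chosen by the volume argument, a structured chain of necks terminating in a second cap, the interpolation lemmas, surgery, deformation of the resulting pieces, and reassembly via Proposition \ref{connected.sum.continuity} and Lemma \ref{undoing.surgery}; the $S^2\times S^1$ case is also treated exactly as in the paper.

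The genuine gap is the degenerate configuration in which the second cap overlaps the first so much that there is no intermediate neck along which to cut: in the paper's notation, $\d\tilde{\mathcal{C}}^0\subset\mathcal{C}^{0.9}$. In that situation your prescription ``perform surgery along $S_1'$ and $S_a'$ and split $M$ into three pieces with caps on the outside'' is simply not available (for instance when $a=1$ the two surgery spheres coincide), and the paper has to branch on the types of $\mathcal{C}$ and $\tilde{\mathcal{C}}$: if both are of type $A$, then $S^3=\tilde{\mathcal{C}}^0\cup\mathcal{C}^{0.9}$ has positive sectional curvature everywhere and Hamilton's theorem is applied with no surgery at all; if both are of type $B$, one needs an extra distance-comparison claim showing that the two cap parametrizations $\psi_{\mathcal{C}}$ and $\psi_{\tilde{\mathcal{C}}}$ overlap on a region where Lemma \ref{interpolation.lemma.1} can splice them, so that the entire metric is deformed directly to a rotationally symmetric one; the mixed cases use a single surgery sphere. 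These repairs use only tools you already cite, but the four-case analysis (and in particular the overlap claim in the $B$/$B$ case) is real content that your sketch omits, and without it the key cutting step fails in a nonempty set of configurations. Two smaller corrections: the paper reduces a Gromov--Lawson sum of round spheres to a single round sphere by local conformal flatness and Corollary \ref{uniformization}, not by Lemma \ref{cylinder.deformation} (that lemma is an ingredient in the proof of Lemma \ref{undoing.surgery}); and in the non-degenerate case the second surgery sphere is taken in the neck $N_i'$ containing $\d\tilde{\mathcal{C}}^0$, not necessarily in $N_a'$.
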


\begin{proof} If $(M^3,g)$ is a $C$-component, or an $\varepsilon$-round component, the sectional curvatures of $g$ are positive. It follows from 
Hamilton's theorem (see \cite{HAMILTON82}) that the normalized Ricci flow provides a continuous deformation of $g$, through positive scalar curvature
metrics, into a constant curvature space form $S^3/\Gamma$. A canonical metric on $S^3/\Gamma = S^3 \# S^3/\Gamma$, as defined above, is obtained as a Gromov-Lawson connected sum of a round sphere and a round $S^3/\Gamma$.  Since it is 
locally conformally flat, the result follows from Corollary \ref{uniformization} and de Rham's theorem (\cite{DERHAM50}).

Therefore we can assume that every point $x \in M^3$ has a $(C,\varepsilon)$-cap or an $\varepsilon$-neck. The Proposition A.25 of \cite{MORGANTIAN07} implies that $M$ is diffeomorphic to $S^3,\re P^3, \re P^3 \# \re P^3$, or $S^2 \times S^1$.

Suppose $M$ is diffeomorphic to $S^3$.


{\it Claim 1.} There exists a $(C,\varepsilon)$-cap $\mathcal{C} \subset S^3$ with neck $N$ and core $Y$, such that no 
point of $s_{N}^{-1}(0.9/\varepsilon)$ is in the core of a $(C,\varepsilon)$-cap that contains $\mathcal{C}$. The neck $N$ is oriented so that the positive $s$-direction points towards the boundary of the cap.

 It follows from the results in the appendix of \cite{MORGANTIAN07} that if every point $x \in (M^3,g)$ has an $\varepsilon$-neck, then $M$ is diffeomorphic to  $S^2 \times S^1$. Therefore $(S^3,g)$  must contain a $(C,\varepsilon)$-cap $\mathcal{C}_1$.  

Suppose the claim is false. Then there exists an  infinite chain of $(C,\varepsilon)$-caps $\mathcal{C}_1 \subset \mathcal{C}_2 \subset \cdots \subset$ in $S^3$, of necks $N_1, N_2, \dots$ and cores $Y_1, Y_2, \dots$, 
such that $s_{N_i}^{-1}(0.9/\varepsilon) \cap Y_{i+1} \ne \emptyset$ for every $i \geq 1$. Let $\varphi_i:S^2 \times (-1/\varepsilon,1/\varepsilon) \g N_i$ be the $\varepsilon$-neck structure on
$N_i = \mathcal{C}_i-\overline{Y_i}$,  oriented so that the positive $s$-direction points towards the boundary of the cap. 

  Denote by  $\mathcal{C}_i^{0.9}$ and $\mathcal{C}_i^0$  the connected components of
$\mathcal{C}_i - s_{N_i}^{-1}(0.9/\varepsilon)$ and $\mathcal{C}_i - s_{N_i}^{-1}(0)$ that contain the core $Y_i$, respectively. Let us prove that $\d\mathcal{C}_i^{0.9} \subset \mathcal{C}_{i+1}^0$. There are two cases to consider: $\d\mathcal{C}_i^{0.9} \cap N_{i+1}=\emptyset$ and 
$\d\mathcal{C}_i^{0.9} \cap N_{i+1} \ne \emptyset$. If $\d\mathcal{C}_i^{0.9} \cap N_{i+1}=\emptyset$, then $\d\mathcal{C}_i^{0.9} \subset \overline{Y_{i+1}} \subset \mathcal{C}_{i+1}^0$.  If $\d\mathcal{C}_i^{0.9} \cap N_{i+1} \ne \emptyset$, then $N_i \cap N_{i+1} \ne \emptyset$ and it follows from Proposition A.11 of \cite{MORGANTIAN07} that $h_{N_i} \leq 1.1 h_{N_{i+1}}.$ It follows from the definition of an $\varepsilon$-neck that the diameter of the sphere $\d\mathcal{C}_i^{0.9}$ is at most  $2\pi h_{N_i}$. On the other hand any curve connecting a point of $Y_{i+1}$ to $\d \mathcal{C}_{i+1}^0$ must cross the left-hand half of $N_{i+1}$. This implies that the distance of $Y_{i+1}$ to $\d \mathcal{C}_{i+1}^0$ is bounded below by $0.9\varepsilon^{-1}h_{N_{i+1}}$.  We have concluded that (by choosing $\varepsilon$ sufficiently small)
$$
diam(\d\mathcal{C}_i^{0.9}) < d(Y_{i+1},\d \mathcal{C}_{i+1}^0).
$$
Since $\d \mathcal{C}_i^{0.9} \cap Y_{i+1} \ne \emptyset$ and $\d \mathcal{C}_i^{0.9}$ is connected, we get that $\d\mathcal{C}_i^{0.9} \subset \mathcal{C}_{i+1}^0$. In any case we have $\d\mathcal{C}_i^{0.9} \subset \mathcal{C}_{i+1}^0$. 

If it also holds that $\d \mathcal{C}_{i+1}^0 \subset \mathcal{C}_i^{0.9}$, we get that the set $\mathcal{C}_i^{0.9} \cup \mathcal{C}_{i+1}^0$ is both open and closed in $S^3$. Since $S^3$ is connected we conclude that  
$S^3= \mathcal{C}_i^{0.9} \cup \mathcal{C}_{i+1}^0\subset \mathcal{C}_{i+1}$ and hence $S^3=\mathcal{C}_{i+1}$. This is not possible since 
$S^3$ is compact. We have already proved that $\d\mathcal{C}_i^{0.9} \subset \mathcal{C}_{i+1}^0$, hence  $\d \mathcal{C}_{i+1}^0\cap \d \mathcal{C}_i^{0.9} =  \emptyset$. Therefore 
$\d \mathcal{C}_{i+1}^0 \subset S^3-\mathcal{C}_i^{0.9}$, since $\d \mathcal{C}_{i+1}^0$ is connected. Since $\mathcal{C}_i^{0.9}$ is connected, this implies that
 $\mathcal{C}_i^{0.9} \subset \mathcal{C}_{i+1}^0$, and hence the regions $s_{N_i}^{-1}((0.25/\varepsilon,0.75/\varepsilon))$ are 
pairwise disjoint.  Since there are infinitely many of them and the  scalar curvature of $S^3$ is bounded below by a positive
constant, this
contradicts the finiteness of volume. This ends the proof of the claim.

{\it Claim 2.}  There exists a structured chain of $\varepsilon$-necks $\{N_1'=N,N_2',\dots,N_a'\}$ and a $(C,\varepsilon)$-cap 
$\tilde{\mathcal{C}}$ with neck $\tilde{N}$ and core $\tilde{Y}$ such that:
\begin{itemize}
 \item [1)] $N_i' \cap Y = \emptyset$ for all $1\leq  i \leq a$,
\item[2)] $s_{N_a'}^{-1}(0.9/\varepsilon) \cap \tilde{Y} \neq \emptyset$,
\item[3)] $s_{\tilde{N}}^{-1}(0) \subset \overline{Y} \cup s_{N_1'}^{-1}((-1/\varepsilon,0.9/\varepsilon)) \cup N_2' \cup \dots \cup N_{a-1}'\cup s_{N_a'}^{-1}((-1/\varepsilon,0.9/\varepsilon))$,
\item[4)] $S^3 = \mathcal{C} \cup N_2' \cup \dots \cup N_a' \cup \tilde{C}$.
\end{itemize}

\begin{figure}
\begin{center}
\input 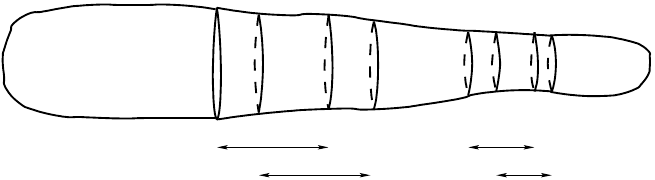_t
\caption{A covering of $(S^3,g)$ by $\varepsilon$-necks and $\varepsilon$-caps.}
\end{center}
\end{figure}

Let $\{N_1'=N,N_2',\dots,N_j'\}$ be a structured chain of $j$ $\varepsilon$-necks, $j \geq 1$, such that 
$N_i' \cap Y = \emptyset$ for all $1\leq  i \leq j$. Choose $z \in s_{N_j'}^{-1}(0.9/\varepsilon)$. Then either $z$ is the center
of an $\varepsilon$-neck $N_{j+1}'$, or $z$ is contained in the core $\tilde{Y}$ of a $(C,\varepsilon)$-cap $\tilde{\mathcal{C}}$
of neck $\tilde{N}$. If $z$ is the center of an $\varepsilon$-neck $N_{j+1}'$, and since
the 2-sphere $s_{N_j'}^{-1}(0)$ separates $S^3$, we have $N_{j+1}' \cap Y = \emptyset$. We have obtained a structured chain of $j+1$ 
$\varepsilon$-necks   $\{N_1'=N,N_2',\dots,N_{j+1}'\}$ with $N_i' \cap Y = \emptyset$ for all $1\leq  i \leq j+1$. 

Let us prove that there can be no infinite structured chain of $\varepsilon$-necks  $\{N_1',N_2',\dots,\}$ in $S^3$. If 
$\{N_1',N_2',\dots,N_a'\}$ is a structured chain of $\varepsilon$-necks, then the sets $s_{N_i'}^{-1}(-0.25/\varepsilon,0.25/\varepsilon)$ are mutually disjoint. Since the scalar curvature of $(S^3,g)$ is bounded 
below by a positive constant, an infinite number of these sets would contradict the finiteness of the volume. This proves the assertion.

It follows that  there must exist a structured chain of $\varepsilon$-necks $\{N_1'=N,N_2',\dots,N_j'\}$, $j \geq 1$, with 
$N_i' \cap Y = \emptyset$ for all $1\leq  i \leq j$, and such that every $z \in s_{N_j'}^{-1}(0.9/\varepsilon)$ is contained in the core $\tilde{Y}$ of a $(C,\varepsilon)$-cap $\tilde{\mathcal{C}}$
of neck $\tilde{N}$. 

Let us fix $z$ and $\tilde{\mathcal{C}}$ as above.
If $s_{N_j'}^{-1}(0.9/\varepsilon) \cap s_{\tilde{N}}^{-1}(0) \ne \emptyset$, it would follow by estimating distances, as in the proof of Claim 1, that $s_{N_j'}^{-1}(0.9/\varepsilon) \subset \tilde{N}$. This cannot be true since $s_{N_j'}^{-1}(0.9/\varepsilon) \cap \tilde{Y} \neq \emptyset$. Hence 
$s_{N_j'}^{-1}(0.9/\varepsilon) \cap s_{\tilde{N}}^{-1}(0) = \emptyset$. Notice that the region
$$
R = \overline{Y} \cup s_{N_1'}^{-1}((-1/\varepsilon,0.9/\varepsilon)) \cup N_2' \cup \dots \cup N_{j-1}'\cup s_{N_j'}^{-1}((-1/\varepsilon,0.9/\varepsilon)),
$$
is connected and $\d R = s_{N_j'}^{-1}(0.9/\varepsilon)$. Therefore either $s_{\tilde{N}}^{-1}(0) \subset S^3 - \overline{R}$, or
$s_{\tilde{N}}^{-1}(0) \subset R$. Let $\tilde{\mathcal{C}}^0$ be the connected component of $\tilde{\mathcal{C}} - s_{\tilde{N}}^{-1}(0)$ that contains the core $\tilde{Y}$. 

If $s_{\tilde{N}}^{-1}(0) \subset S^3 - \overline{R}$, then the connectedness of $R$ implies
 $R \subset \tilde{\mathcal{C}}^0$. Since the diameter of 
 $s_{N_j'}^{-1}([0.9/\varepsilon,1/\varepsilon))$ is at most $0.2 h_{N_j'}/\varepsilon$ and 
 $d(\tilde{\mathcal{C}}^0,\d \tilde{\mathcal{C}})>0.9h_{\tilde{N}}/\varepsilon$, it follows by distance comparison as before that 
 $s_{N_j'}^{-1}([0.9/\varepsilon,1/\varepsilon)) \subset \tilde{\mathcal{C}}$.
 This implies $\mathcal{C} \subset \tilde{\mathcal{C}}$.  If $j=1$ this is already in contradiction with 
the choice of $\mathcal{C}$ made in the
previous claim, since in this case we would have $z \in s_{N}^{-1}(0.9/\varepsilon) \cap \tilde{Y}$. If $j \geq 2$, we notice that $z \in s_{N_j'}^{-1}(0.9/\varepsilon) \cap \tilde{Y}$ and the points of 
$s_{\tilde{N}}^{-1}(0) \subset S^3-\overline{R}$ are in the same connected component of the complement in $S^3$ of the 2-sphere $s_{N}^{-1}(0.9/\varepsilon)$.  Therefore we cannot have $s_{N}^{-1}(0.9/\varepsilon) \subset \tilde{N} \cap \tilde{\mathcal{C}}^0$ (by item 4 of Prop. A.11 in \cite{MORGANTIAN07}, this would imply that $s_{N}^{-1}(0.9/\varepsilon)$ separates $s_{\tilde{N}}^{-1}(0)$ from $\tilde{Y}$). We conclude that
$s_{N}^{-1}(0.9/\varepsilon) \cap \tilde{Y} \ne \emptyset$, and again this is in contradiction with Claim 1. 

Hence $\d \tilde{\mathcal{C}}^0=s_{\tilde{N}}^{-1}(0) \subset R$. Since $\d R \cap \tilde{Y} \neq \emptyset$, it follows (by distance estimates) that $\d R \subset \tilde{\mathcal{C}}^0$. Therefore it follows by connectedness of $S^3$ that $S^3 = R \cup \tilde{\mathcal{C}}^0$. This finishes the
proof of the claim.

\begin{figure}
\begin{center}
\input 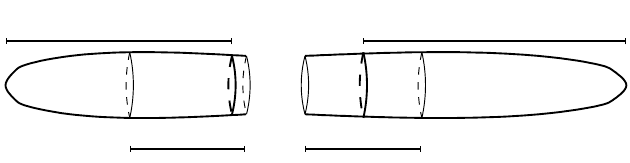_t
\caption{The $\varepsilon$-caps $\mathcal{C}$ and $\tilde{\mathcal{C}}$.}
\end{center}
\end{figure}

We choose to orient the neck $\tilde{N}$ differently, so that the positive $s$-direction points towards the core $\tilde{Y}$. Hence $\tilde{\mathcal{C}}^0 = \overline{\tilde{Y}} \cup s_{\tilde{N}}^{-1}((0,1/\varepsilon))$. 

Let 
$\mathcal{C}^{0.9} = \overline{Y} \cup s_{N}^{-1}((-1/\varepsilon,0.9/\varepsilon))$. 

Since the manifold is diffeomorphic to the 3-sphere, only caps of types $A$ and $B$ can appear. We will divide the rest
of the proof in four cases according with the types of the caps.

{\bf Case I.} $\mathcal{C}$ and $\tilde{\mathcal{C}}$ are of type $A$.  

Suppose $\d \tilde{\mathcal{C}}^0 \subset \mathcal{C}^{0.9}$.  Then either
 $\d \mathcal{C}^{0.9} \subset int\, (S^3 - \tilde{\mathcal{C}}^0)$ or $\d \mathcal{C}^{0.9} \subset \tilde{\mathcal{C}}^0$. If
$\d \mathcal{C}^{0.9} \subset int\, (S^3 - \tilde{\mathcal{C}}^0)$, then $ \tilde{\mathcal{C}}^0 \subset \mathcal{C}^{0.9}$. This is because the closure of $\tilde{\mathcal{C}}^0$ is connected, and therefore it cannot intersect both $\mathcal{C}^{0.9}$ and the complement of  $\mathcal{C}^{0.9}$ without intersecting the boundary
$\d \mathcal{C}^{0.9}$. This is in contradiction with property (2) of the previous claim, since
$\tilde{Y} \subset  \tilde{\mathcal{C}}^0 $. Therefore $\d \mathcal{C}^{0.9} \subset \tilde{\mathcal{C}}^0$, and hence $S^3 = \tilde{\mathcal{C}}^0  \cup \mathcal{C}^{0.9}$ since in that case we would have $\tilde{\mathcal{C}}^0  \cup \mathcal{C}^{0.9}$ both open and closed in $S^3$. This implies that $(S^3,g)$ has 
positive sectional curvature, and the result follows from \cite{HAMILTON82}.

If $\d \tilde{\mathcal{C}}^0$ is not contained in  $\mathcal{C}^{0.9}$, then there must exist $2\leq i \leq a$ such that
$\d \tilde{\mathcal{C}}^0 \subset s_{N_i'}^{-1}((-0.01/\varepsilon,0.95/\varepsilon))$. This implies that the distance of any point in $s_{N_i'}^{-1}((-4,4))$ to $s_{\tilde{N}}^{-1}(0)$ is strictly less than $d(s_{\tilde{N}}^{-1}(0), \d \tilde{\mathcal{C}})$. Hence  $s_{N_i'}^{-1}((-4,4))\subset \tilde{\mathcal{C}}$. In particular we obtain that $s_{N_i'}^{-1}((-4,4))$ and the component bounded by $s_{N_i'}^{-1}(0)$ in $\tilde{\mathcal{C}}$ have positive sectional curvature. It follows from the Interpolation Lemmas 
\ref{interpolation.lemma.1} and \ref{interpolation.lemma.2} that there exists a diffeomorphism $\psi:S^2 \times (-1/\varepsilon,\beta) \g \bigcup_{j=1}^i N_j'$, such that:
\begin{itemize}
 \item [1)] $\psi\,(\theta,t) = \psi_1(\theta,t)$ for $(\theta,t) \in S^2 \times (-1/\varepsilon,0.25/\varepsilon)$,
\item [2)] $\psi\, (\theta,t) = \psi_i(A\cdot \theta,t-\beta+1/\varepsilon)$ for $(\theta, t) \in S^2 \times (\beta-1.25/\varepsilon, \beta)$, where $A$ is an isometry of $(S^2,d\theta^2)$,
\item[3)] there exists  a continuous path of metrics $\mu \in [0,1] \mapsto g_\mu$ of positive scalar curvature
on $S^2 \times (-1/\varepsilon, \beta)$, with
$g_0 = \psi^*(g)$ and  $g_1$ rotationally symmetric, and such that it restricts to the linear homotopy
$g_\mu = (1-\mu)\psi^*(g) + \mu \, h_1^2 g_{cyl}$
on $S^2 \times (-1/\varepsilon,0.25/\varepsilon)$ and  to the linear homotopy
$g_\mu = (1-\mu)\psi^*(g) + \mu \, h_i^2 g_{cyl}$  on $S^2 \times (\beta-1.25/\varepsilon, \beta)$.
\end{itemize}
Here $\psi_j : S^2 \times (-1/\varepsilon,1/\varepsilon) \g N_j'$ denotes the $\varepsilon$-neck structure associated to the neck $N_j'$.

We perform surgery along the central spheres $S_1=s_{N_1'}^{-1}(0)$ and $S_i=s_{N_i'}^{-1}(0)$, and glue standard caps to both left and right sides of each sphere as explained in Section \ref{surgery}. (Here we could have performed surgery along $S_1$ and $S_a=s_{N_a'}^{-1}(0)$ as in Section \ref{outline}, but we choose to do it along $S_1$ and $S_i$ so that the proof can be more easily modified to handle the other cases). In doing this we break the manifold into three 
components: $(\mathcal{S}_1,g_1),(P,g_P)$, and $(\mathcal{S}_2,g_2)$. It follows from Corollary \ref{positive.curvature.surgery} that the left-hand
$(\mathcal{S}_1,g_1)$ and the right-hand $(\mathcal{S}_2,g_2)$ components have positive sectional curvature, since the same holds for the caps $\mathcal{C}$ and $\tilde{\mathcal{C}}$. Therefore they can be deformed to  constant curvature metrics by the normalized Ricci flow.  The middle component $(P,g_P)$  is obtained by attaching standard caps
to the boundary of the region between the spheres $S_1$ and $S_i$.   Since the deformation of item (3) above restricts to linear homotopies
on neighborhoods of the  surgery spheres, it follows from Lemma \ref{deforming.surgery}  that it can be extended to the caps. This provides a
 deformation of the metric  on $P$, through metrics of positive scalar curvature, that ends in a rotationally symmetric manifold. 
It follows from Corollary \ref{uniformization} that this locally conformally flat manifold can be deformed, through metrics of positive scalar curvature, 
into one of constant sectional curvature.  

We have proved that there exist continuous families of positive scalar curvature metrics  $g_{1,\mu}, g_{P,\mu}, g_{2,\mu}$ on
$\mathcal{S}_1, P$, $\mathcal{S}_2$, respectively, $\mu \in [0,1]$, such that $g_{1,0}=g_1$, $g_{P,0}=g_P$, $g_{2,0}=g_2$, and
so that $g_{1,1}, g_{P,1}$, and $g_{2,1}$ are round. It follows from Proposition \ref{connected.sum.continuity} that there is
a choice of parameters so that we can consider the continuous family of connected sums at the surgery tips: 
$$
g^{\#}_\mu= (g_{1,\mu} \,\#\, g_{P,\mu} \, \#\, g_{2,\mu})_{\mu \in [0,1]} {\rm \,\, on \,\, } \mathcal{S}_1 \,\#\, P \,\#\, \mathcal{S}_2. 
$$

The Proposition \ref{undoing.surgery} implies that $(S^3,g)$ can be continuously deformed, through positive scalar curvature metrics,
into $(\mathcal{S}_1 \,\#\, P \,\#\, \mathcal{S}_2, g_0^{\#})$. The metric $g_1^\#$ is a Gromov-Lawson connected sum of three round spheres, hence it is locally conformally flat by a remark in Section \ref{connected.sums}. The proof of Case I finishes with Corollary \ref{uniformization}.

\begin{figure}
\begin{center}
\input 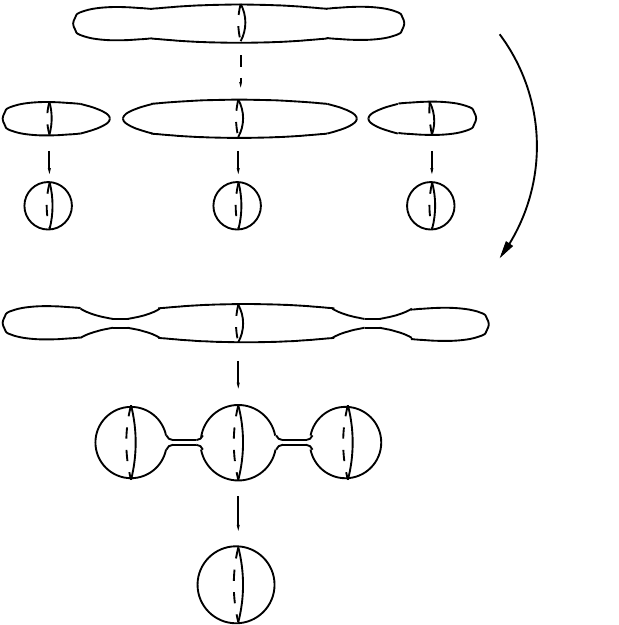_t
\caption{Illustrating Case I.}
\end{center}
\end{figure}

{\bf Case II} $\mathcal{C}$ is of type $A$, and $\tilde{\mathcal{C}}$ is of type $B$.

Suppose $\d \tilde{\mathcal{C}}^0  \subset \mathcal{C}^{0.9}$.  Then  $\d \mathcal{C}^{0.9} \subset \tilde{\mathcal{C}}^0$ and $S^3 = \tilde{\mathcal{C}}^0 \cup \mathcal{C}^{0.9}$, as in the proof of Case I. We can  perform surgery along the central sphere   $\tilde{S}=s_{\tilde{N}}^{-1}(0)$, and glue standard caps to both left and right sides of it. In doing this we break the manifold into two
components: $(\mathcal{S}_1,g_1)$ and $(\mathcal{S}_2,g_2)$. Since $s^{-1}_{\tilde{N}}((-4,4)) \subset \mathcal{C}$, it follows from Corollary \ref{positive.curvature.surgery} that the left-hand
$(\mathcal{S}_1,g_1)$ component  has positive sectional curvature. Therefore it can be deformed to a  constant curvature metric by the normalized Ricci flow.  It follows from Lemma \ref{deforming.surgery} and  a remark of Section \ref{section.ricci} that the right-hand  component $(\mathcal{S}_2,g_2)$  can be deformed, through metrics of positive scalar curvature, into a rotationally symmetric manifold. (Recall that the standard initial metric
is rotationally symmetric). It follows from Corollary \ref{uniformization} that this manifold can be deformed 
into one of constant sectional curvature.  Since both of the components $(\mathcal{S}_1,g_1)$ and  $(\mathcal{S}_2,g_2)$ can be
deformed to round metrics, the original manifold $(S^3,g)$ is isotopic to a Gromov-Lawson connected sum of round spheres (two of them) as in
the proof of Case I  and the
result follows from Corollary \ref{uniformization}.  
 
If $\d \tilde{\mathcal{C}}^0$ is not contained in  $\mathcal{C}^{0.9}$, then there must exist $2\leq i \leq a$ such that
$\d \tilde{\mathcal{C}}^0 \subset s_{N_i'}^{-1}((-0.01/\varepsilon,0.95/\varepsilon))$. It follows from the Interpolation Lemmas 
\ref{interpolation.lemma.1} and \ref{interpolation.lemma.2} that there exists a diffeomorphism 
$$
\psi:S^2 \times (-1/\varepsilon,\beta) \g \bigcup_{j=1}^i N_j' \cup \tilde{N},
$$
such that:
\begin{itemize}
 \item [1)] $\psi\,(\theta,t) = \psi_1(\theta,t)$ for $(\theta,t) \in S^2 \times (-1/\varepsilon,0.25/\varepsilon)$,
\item [2)] $\psi\, (\theta,t) = \psi_{\tilde{N}}(A\cdot \theta,t-\beta+1/\varepsilon)$ for $(\theta, t) \in S^2 \times (\beta-0.75/\varepsilon, \beta)$, where $A$ is an isometry of $(S^2,d\theta^2)$,
\item[3)] there exists  a continuous path of metrics $\mu \in [0,1] \mapsto g_\mu$ of positive scalar curvature
on $S^2 \times (-1/\varepsilon, \beta)$, with
$g_0 = \psi^*(g)$ and  $g_1$ rotationally symmetric, and such that it restricts to the linear homotopy
$g_\mu = (1-\mu)\psi^*(g) + \mu \, h_1^2 g_{cyl}$
on $S^2 \times (-1/\varepsilon,0.25/\varepsilon)$ and  to the linear homotopy
$g_\mu = (1-\mu)\psi^*(g) + \mu \, \tilde{h}^2 g_{cyl}$  on $S^2 \times (\beta-0.75/\varepsilon, \beta)$.
\end{itemize}

We can  perform surgery along the central sphere $S_1=s_{N}^{-1}(0)$, and glue standard caps to both left and right sides of it.
 In doing this we break the manifold into two 
components: $(\mathcal{S}_1,g_1)$, and $(\mathcal{S}_2,g_2)$. It follows from Corollary \ref{positive.curvature.surgery} that the left-hand
$(\mathcal{S}_1,g_1)$ has positive sectional curvature. Therefore it can be deformed to  a constant curvature metric by the normalized Ricci flow.   Since the deformation of item (3) above restricts to linear homotopies
on the regions $\psi_1(S^2 \times (-1/\varepsilon,0.25/\varepsilon))$ and $\psi_{\tilde{N}}(S^2 \times (0.25/\varepsilon, 1/\varepsilon))$ , it follows from Lemma \ref{deforming.surgery} and a remark of Section \ref{section.ricci} that it can be extended to the caps. This provides a
 deformation of the metric  on $\mathcal{S}_2$, through metrics of positive scalar curvature, that ends in a 
rotationally symmetric manifold. The proof of Case II proceeds similarly as before, by using Corollary \ref{uniformization} and making Gromov-Lawson connected sums.

{\bf Case III} $\mathcal{C}$ is of type $B$, and  $\tilde{\mathcal{C}}$ is of type $A$.

Suppose $\d \tilde{\mathcal{C}}^0 \subset \mathcal{C}^{0.9}$.  Then  $\d \mathcal{C}^{0.9} \subset \tilde{\mathcal{C}}^0$ and $S^3 = \tilde{\mathcal{C}}^0  \cup \mathcal{C}^{0.9}$, as in the proof of Case I.  Notice that this also implies that the distance of any point of $s_{N}^{-1}((-4,4))$ to $\tilde{\mathcal{C}}^0$ is strictly less than $d(\tilde{\mathcal{C}}^0, \d \tilde{\mathcal{C}})$. Therefore $s_{N}^{-1}((-4,4)) \subset \tilde{\mathcal{C}}$. In particular we obtain that $s_{N}^{-1}(-4,4)$ and the component bounded by $s_{N}^{-1}(0)$ in $\tilde{\mathcal{C}}$ have positive sectional curvature.
We can  perform surgery along the central sphere   $S=s_{N}^{-1}(0)$, and glue standard caps to both left and right sides of it. In doing this we break the manifold into two
components: $(\mathcal{S}_1,g_1)$ and $(\mathcal{S}_2,g_2)$.  It follows from Corollary \ref{positive.curvature.surgery} that the right-hand
$(\mathcal{S}_2,g_2)$ component  has positive sectional curvature, and therefore can be deformed to a round sphere by the normalized Ricci flow. The left-hand component $(\mathcal{S}_1,g_1)$  can be deformed into a rotationally symmetric manifold with the same argument used for the right-hand component in Case II. The proof proceeds similarly as before. 

If $\d \tilde{\mathcal{C}}^0$ is not contained in  $\mathcal{C}^{0.9}$,  the result follows from the previous arguments by interpolating the intermediate necks.

{\bf Case IV} $\mathcal{C}$ and  $\tilde{\mathcal{C}}$ are  of type $B$.

If $\d \tilde{\mathcal{C}}^0$ is not contained in  $\mathcal{C}^{0.9}$, then there must exist $2\leq i \leq a$ such that
$\d \tilde{\mathcal{C}}^0 \subset s_{N_i'}^{-1}((-0.01/\varepsilon,0.95/\varepsilon))$. It follows from the Interpolation Lemmas 
\ref{interpolation.lemma.1} and \ref{interpolation.lemma.2} that there exists a diffeomorphism 
$$
\psi:S^2 \times (-1/\varepsilon,\beta) \g \bigcup_{j=1}^i N_j' \cup \tilde{N},
$$
with the same properties as in the proof of Case II.

Since the deformation of item (3) above restricts to linear homotopies
on the regions $\psi_1(S^2 \times (-1/\varepsilon,0.25/\varepsilon))$ and $\psi_{\tilde{N}}(S^2 \times (0.25/\varepsilon, 1/\varepsilon))$, it follows from a remark in Section \ref{section.ricci} that it extends as linear homotopies to the caps. Therefore the entire
manifold $(S^3,g)$  can be deformed, through metrics of positive scalar curvature, into a rotationally symmetric manifold. The result
follows from Corollary \ref{uniformization}. 

Suppose $\d \tilde{\mathcal{C}}^0\subset \mathcal{C}^{0.9}$. Then  $\d \mathcal{C}^{0.9} \subset \tilde{\mathcal{C}}^0$ and $S^3 = \tilde{\mathcal{C}}^0  \cup \mathcal{C}^{0.9}$, as in the proof of Case I. Let $\mathcal{C}^0 = \overline{Y} \cup s_{N}^{-1}((-1/\varepsilon,0))$. Since $\d \mathcal{C}^{0.9} \subset \tilde{\mathcal{C}}^0$, it follows by distance comparison that
$\d \mathcal{C}^0 \subset s_{\tilde{N}}^{-1}((-0.95/\varepsilon, 0)) \cup \tilde{\mathcal{C}}^0$. Let
$\psi_\mathcal{C}:S^2 \times (-2/\varepsilon,1/\varepsilon) \g \mathcal{C}$ and $\psi_{\tilde{\mathcal{C}}}:S^2 \times (-1/\varepsilon,2/\varepsilon) \g \tilde{\mathcal{C}}$ be the extensions of $\psi_1$ and $\psi_{\tilde{N}}$ given
by the standard cap structures of $\mathcal{C}$ and $\tilde{\mathcal{C}}$, respectively.

{\it Claim.} $\psi_\mathcal{C}((-1.5/\varepsilon,0.5/\varepsilon)) \cap \psi_{\tilde{\mathcal{C}}}(-0.5/\varepsilon,1.5/\varepsilon)) 
\neq \emptyset$.

Suppose $\tilde{h} \leq h_N$. If $\psi_\mathcal{C}((-1.5/\varepsilon,0.5/\varepsilon)) \cap \psi_{\tilde{\mathcal{C}}}(-0.5/\varepsilon,1.5/\varepsilon)) 
= \emptyset$, it follows from the connectedness of the regions and the inclusion $\d \tilde{\mathcal{C}}^0 \subset \mathcal{C}^{0.9}$
that $\psi_{\tilde{\mathcal{C}}}((-0.5/\varepsilon,1.5/\varepsilon))$ is contained in the component of 
$\mathcal{C} - \psi_\mathcal{C}(S^2 \times \{-1.5/\varepsilon\})$ disjoint from $s_{N}^{-1}(0)$.
Since the distance of any point in $\tilde{\mathcal{C}}$ to the central sphere $s_{\tilde{N}}^{-1}(0)$ is at most $(2.1)\tilde{h}/\varepsilon$, and the distance from any point of $\psi_\mathcal{C}(S^2 \times \{-1.5/\varepsilon\})$ to
$\d \mathcal{C}$ is at least $(2.25)h_N/\varepsilon$, we obtain $\tilde{\mathcal{C}} \subset \mathcal{C}$. This implies
$\mathcal{C}=S^3$, which is a contradiction. If $h_N \leq \tilde{h}$, the proof is similar (we would show that $\tilde{\mathcal{C}}=S^3$). This finishes the proof of the claim.

Since $\psi_\mathcal{C}((-1.5/\varepsilon,0.5/\varepsilon)) \cap \psi_{\tilde{\mathcal{C}}}(-0.5/\varepsilon,1.5/\varepsilon)) 
\neq \emptyset$, it follows from Lemma \ref{interpolation.lemma.1} and  a remark in Section \ref{section.ricci}  that the manifold $(S^3,g)$ can be deformed, through metrics of positive scalar curvature, into a rotationally symmetric manifold. The result
follows from Corollary \ref{uniformization}. 


We have finished the case of $S^3$. The proof is similar when $M$ is diffeomorphic to $\re P^3$ or $\re P^3 \# \re P^3$. The arguments used to handle caps of type $B$ can be easily modified to apply for caps of type $C$. The key property is that these caps are $\varepsilon$-close to locally conformally flat metrics of positive scalar curvature which are cylindrical near the end.

Suppose now that $M$ is diffeomorphic to $S^2 \times S^1$. In that case it follows from the proof of Proposition A.21 in \cite{MORGANTIAN07} that every point is contained in an $\varepsilon$-neck whose central sphere does not separate $M$.
Let $N$ be one such neck, with central sphere $S$. Do surgery on $N$ along $S$ and glue standard caps to both sides of it. The resulting manifold is a 3-sphere $(S^3,g_{surg})$ endowed with a metric of positive scalar curvature such that every point of it has a canonical neighborhood. The previous arguments imply that  $(S^3,g_{surg})$ can be deformed into a round sphere. It follows from Lemma \ref{undoing.surgery} that the original manifold $(S^2 \times S^1,g)$ is isotopic to the Gromov-Lawson connected sum of 
$(S^3,g_{surg})$ with itself, where the connected sum is performed at the tips of the spherical caps. The result follows  since
a canonical metric on $S^2 \times S^1$  is defined as a Gromov-Lawson connected sum of a round 3-sphere to itself.

\begin{figure}
\begin{center}
\input 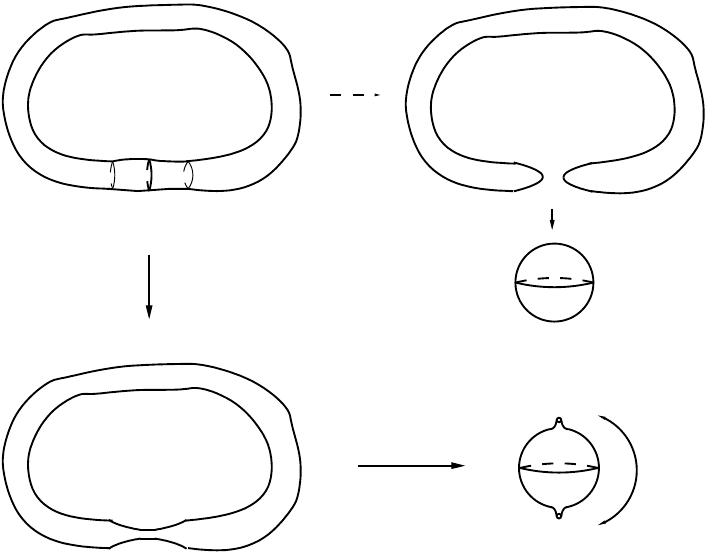_t
\caption{Deforming a union of $\varepsilon$-necks.}
\end{center}
\end{figure}

\end{proof}

We will need the following lemma:
\begin{lem}\label{canonical.metrics}
Any connected manifold obtained from  finitely many components endowed with canonical metrics by performing connected sums and attaching handles is isotopic to  a canonical metric.
\end{lem}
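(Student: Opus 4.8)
The plan is to reduce an arbitrary iterated connected sum (with handles) of canonical metrics to a single canonical metric by repeatedly absorbing one summand at a time, using the fact (established earlier in this section) that any two canonical metrics on the same manifold are isotopic, together with Proposition \ref{connected.sum.continuity} for performing connected sums along families. First I would set up the induction: suppose the connected manifold $(M,g)$ is obtained from components $(M_1,\hat{g}_1),\dots,(M_m,\hat{g}_m)$, each carrying a canonical metric, by a sequence of Gromov-Lawson connected sums and handle attachments. By associativity of the connected sum (up to diffeomorphism) and the fact that a handle attachment on $M_i$ is a connected sum with $S^2\times S^1$, it suffices to handle the case of connect-summing two canonical metrics $(M_1,\hat{g}_1)$ and $(M_2,\hat{g}_2)$: the general statement then follows by induction on the number of summands, noting at each stage that the result is again a canonical metric on the appropriate connected sum, hence the hypothesis is preserved.

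The core step is therefore: if $\hat{g}_1$ is a canonical metric on $M_1=S^3\#(S^3/\Gamma_1)\#\cdots\#(S^3/\Gamma_k)\#(S^2\times S^1)^{\#l}$ and $\hat{g}_2$ is a canonical metric on $M_2=S^3\#(S^3/\Gamma_1')\#\cdots\#(S^3/\Gamma_{k'}')\#(S^2\times S^1)^{\#l'}$, then $\hat{g}_1\#\hat{g}_2$ is isotopic to a canonical metric on $M_1\# M_2$. I would prove this by moving, via Proposition \ref{connected.sum.continuity}, one of the two base points of the Gromov-Lawson construction: in $\hat{g}_1$ the metric has a distinguished region isometric to (a ball in) the principal round sphere $S^3$ of the construction, and likewise for $\hat{g}_2$. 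Choose the base point $p_1$ for the connected sum inside the principal-sphere region of $(M_1,\hat{g}_1)$ and $p_2$ inside the principal-sphere region of $(M_2,\hat{g}_2)$; by Proposition \ref{connected.sum.continuity} this is harmless since base points and frames can be transported continuously. Now $\hat{g}_1\#\hat{g}_2$ restricted away from the gluing region is just $\hat{g}_1$ on $M_1\setminus B_{r_0}(p_1)$ and $\hat{g}_2$ on $M_2\setminus B_{r_0}(p_2)$, and near $p_1$ and $p_2$ both metrics have constant curvature, so the Gromov-Lawson neck joining them is rotationally symmetric (by the Remark in Section \ref{connected.sums}). Hence $\hat{g}_1\#\hat{g}_2$ is itself, up to reparametrizing which spherical summands and handles are attached at which points, literally an instance of the canonical metric construction with principal sphere the connected sum of the two principal spheres: the spherical space-form summands of $M_1$ and of $M_2$ are all attached by Gromov-Lawson necks to this combined principal sphere, and likewise the handles, because a Gromov-Lawson connected sum of two round spheres is again (isotopic to) a round sphere by the conformal method (Proposition \ref{conformal.deformations}, via Corollary \ref{uniformization}, using local conformal flatness). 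Making this precise is where the Remark on reversing/changing the planar curves $\gamma$ in Section \ref{connected.sums} is used: one shows the various choices of necks used in $\hat{g}_1$, $\hat{g}_2$ and in the extra gluing are all conformal to one another and coincide near their ends, so the composite metric is isotopic, with unchanged ends, to one built by a single application of the canonical construction.

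The main obstacle I anticipate is bookkeeping rather than geometry: one must check that the "combined" metric really satisfies the definition of a canonical metric on the right manifold, i.e. that $M_1\# M_2$ is diffeomorphic to $S^3\#(\text{all the }S^3/\Gamma_i\text{ and }S^3/\Gamma_j')\#(S^2\times S^1)^{\#(l+l')}$ with the correct orientations on the space-form summands, and that the isotopy class of the metric does not depend on the order in which summands were absorbed. The diffeomorphism statement is Milnor's uniqueness of connected-sum decompositions (already invoked earlier in the section via \cite{MILNOR62}), and the orientation issue is handled by de Rham's theorem \cite{DERHAM50} exactly as in the earlier paragraph showing any two canonical metrics on $M$ are isotopic. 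Order-independence follows because at each stage the output is again a canonical metric and different canonical metrics on the same manifold are isotopic. Once these are in place, the induction closes: every connected manifold assembled from canonically-metrized pieces by connected sums and handle attachments is isotopic to a single canonical metric.
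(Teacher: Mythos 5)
Your proposal runs on the same engine as the paper's proof: transport Gromov--Lawson attachment points into the principal-sphere regions using Proposition \ref{connected.sum.continuity}, merge two round principal spheres joined by a neck into a single round sphere via local conformal flatness and Corollary \ref{uniformization} (re-performing the remaining attachments continuously along that isotopy, again by Proposition \ref{connected.sum.continuity}), and close by induction, with Milnor and de Rham needed only for the statement that any two canonical metrics on the same manifold are isotopic. The difference is in the bookkeeping: the paper inducts on the number of necks joining the principal spheres and looks at the central sphere of a single neck --- if it separates, the configuration splits into two smaller ones and one reduces to the one-neck case; if it does not separate, the manifold is a configuration with one fewer neck plus a handle, and a canonical metric with a handle attached is again canonical. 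You instead induct on the number of summands and dispose of handles by declaring a handle attachment to be a connected sum with $S^2\times S^1$.

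That last step is the one place where your argument, as written, has a gap. A handle attachment is a Gromov--Lawson self-sum of $(M,\hat g)$ at two of its points; a Gromov--Lawson connected sum with a canonical metric on $S^2\times S^1$ is a different metric on a diffeomorphic manifold (an extra round sphere carrying its own handle, joined to $M$ by a third neck). That these two metrics are isotopic is not automatic --- it is a statement of exactly the kind the lemma is asserting --- so it cannot be used as a reduction without proof. The repair is to treat handles directly, as the paper does: after moving the two attachment points of the handle into the principal-sphere region (Proposition \ref{connected.sum.continuity}), the resulting metric is, by the definition of a canonical metric in Section \ref{proofs}, canonical with one more $S^2\times S^1$ summand, and no auxiliary summand is needed. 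With that change (and noting that your phrase ``literally an instance of the canonical construction with principal sphere $S^3\# S^3$'' still requires the conformal deformation of $S^3\# S^3$ to a single round sphere while the other attachments are carried along continuously, which you do invoke), your induction closes and agrees in substance with the paper's argument.
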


\begin{proof}
Suppose $M$ is a connected sum of finitely many components with canonical metrics. Let $S_1, \dots,S_N$ be the corresponding
principal spheres. These spheres are joined to each other by finitely many necks $N_1,\dots,N_k$. We  allow the existence of necks connecting
a principal sphere to itself. We will prove the result by induction on the number of necks $k$.

If there is only one neck, then we have at most  two components. If there is only one component connected to itself by a 
neck, then the result is evidently a canonical metric. If there are exactly two components, 
let  $S_1$ and $S_2$ be their principal spheres. The connected sum $S_1 \# S_2$ can be deformed to a single round sphere, since it
is locally conformally flat.  Therefore the manifold $M$ is isotopic to  a canonical metric. 

Suppose there are $a \geq 2$  necks. Let $N$ be a neck with central 2-sphere $S_N$.  If 
$S_N$ disconnects the manifold, then $M = M_1 \#M_2$ where $M_1$ and $M_2$ are connected sums, with less than $a$ necks each, of finitely many components endowed with canonical metrics.  The induction hypothesis implies that each $M_i$ is isotopic to
 a canonical metric. Therefore our original manifold can be  deformed into 
 a connected sum of two canonical components by just one neck. The result then follows from the $a=1$ case. If $S_N$ does not disconnect the manifold, then $M$ can be obtained as a configuration of $a-1$ necks with one handle attached. It follows  from the induction hypothesis that our manifold is isotopic to a canonical metric with
 one handle attached. Since that is a canonical component by itself, the result is proved.     
\end{proof}

\begin{proof} [Proof of the Main Theorem] Let $g_0$ be  a  positive scalar curvature metric on $M^3$, which can be scaled to be
normalized. Let $(M^3_i,g_i(t))_{t \in [t_i,t_{i+1})}$, $0 \leq i \leq j$, be the Ricci flow with surgery given by Theorem \ref{ricci.flow.surgery}, with initial condition $(M^3,g_0)$. 

\begin{figure}
\begin{center}
\input 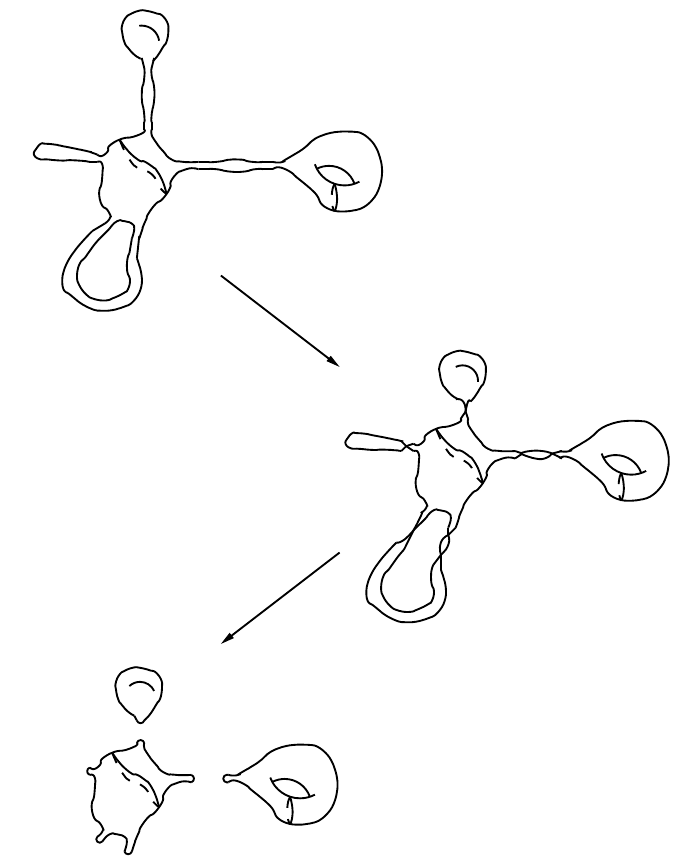_t
\caption{Surgery at time $t_{i+1}$.}
\end{center}
\end{figure}

Let $\mathcal{A}_i$ be the assertion that the restriction of $g_i(t_i)$ to each component of $M_i$ is isotopic to  a canonical metric.

{\it Claim 1.} If $i<j$ and $\mathcal{A}_{i+1}$ holds, so does $\mathcal{A}_{i}$.

\begin{figure}
\begin{center}
\input 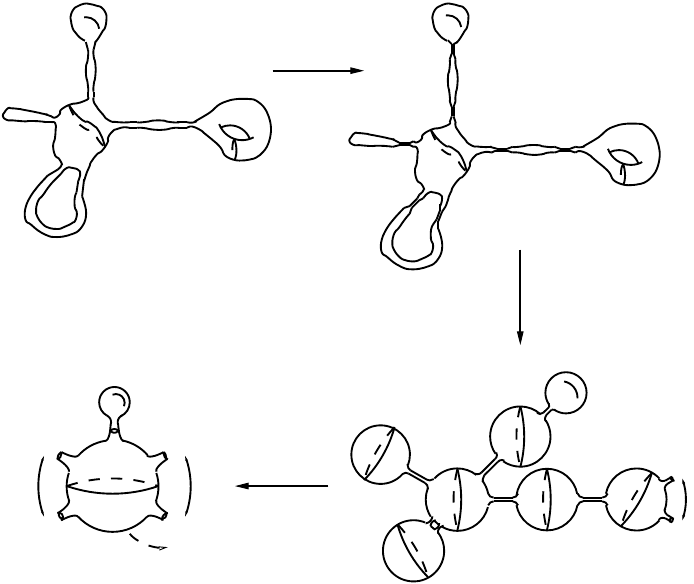_t
\caption{Isotopy through the singular time $t_{i+1}$.}
\end{center}
\end{figure}

Set $\rho = \delta_i \, r_{i}$ and $h=h(\rho,\delta_i)$, and let $\Omega(t_{i+1})$ and $\Omega_{\rho}(t_{i+1})$ be as in 
Section \ref{section.ricci}. We will denote by  $\Omega^{big}(t_{i+1})$  the union of the finitely many components of $\Omega(t_{i+1})$ that intersect $\Omega_\rho(t_{i+1})$.

The open set $\Omega^{big}(t_{i+1})$ contains a finite collection of disjoint $2\varepsilon$-horns $\mathcal{H}_1, \dots, \mathcal{H}_l$, with boundary contained in $\Omega_{\rho/2C}$, and  such
that the complement of the union of the interiors of these horns is a  compact 3-manifold with boundary that contains $\Omega_\rho$. For each
$1 \leq k \leq l$, let $N_k \subset \mathcal{H}_k$ be the $\delta_i$-neck, centered at $y_k$ with $R_{g_i(t_{i+1})}(y_k)=h^{-2}$, as
explained in Section \ref{section.ricci}. 
Let $S_k$ be the central sphere of this neck, which is oriented so that the positive $s$-direction 
points towards the end of the horn. If $\mathcal{H}_k^+$ is the  complementary component of $S_k$ in $\mathcal{H}_k$ which contains the end of the horn, recall that the continuing
region  $C_{t_{i+1}}$ is defined as 
the complement of $\bigsqcup_{k=1}^l \mathcal{H}_k^+$ in $\Omega^{big}(t_{i+1})$.

Let $N_k^+,N_k^-$ be the positive and negative halves of $N_k$, respectively. If $t' \in (t_i,t_{i+1})$ is sufficiently close to $t_{i+1}$, the metrics $(1-\mu)g_{t'} + \mu g_i(t_{i+1})$, $\mu \in [0,1]$, have positive scalar curvature on $C_{t_{i+1}} \cup \bigsqcup_{k=1}^l N_k^+$ and induce $\delta_i$-neck structures on each $N_k$.
Let  $(\tilde{M}_{t'},\tilde{g}_{t'})$ be the manifold obtained from $(C_{t_{i+1}} \cup \bigsqcup_{k=1}^l N_k^+,g_{t'})$ by surgery along the central spheres of each $N_k$.
Then $\mu \in [0,1] \mapsto ((1-\mu)g_{t'} + \mu g_i(t_{i+1}))_{surg,\delta}$ is a continuous family of positive scalar curvature metrics 
connecting the manifolds $(\tilde{M}_{t'},\tilde{g}_{t'})$ and $(M_{i+1},G_{t_{i+1}})$. Since $G_{t_{i+1}}=g_{i+1}(t_{i+1})$, it follows  from the assertion $\mathcal{A}_{i+1}$ that  each component of $(\tilde{M}_{t'},\tilde{g}_{t'})$ is isotopic to a canonical metric. 

Let $\psi_k:S^2 \times (-1/\delta,1/\delta) \g N_k$ be
the diffeomorphism of the  $\delta_i$-neck structure on $(N_k,g_{t'})$ such that $y_k \in \psi_k(S^2 \times \{0\}) = S_k(t')$, and $\psi_k(S^2 \times (0,1/\delta)) \cap C_{t_{i+1}}= \emptyset$. Since $\Omega_\rho$ is contained in the complement of the union
of the interiors of the horns,  we have $R(x,t') > r_{i}^{-2}$ for every $x \notin C_{t_{i+1}}$ if $t'$ is sufficiently close to $t_{i+1}$. This implies that every $x \notin C_{t_{i+1}}$ has a $(C, \varepsilon)$-canonical neighborhood at time $t'$. 

Let $(P_j,g_{P_j})$, $j=1,\dots,m$, be the components of the compact manifold obtained from 
$$ 
\Big((M_{t'} -C_{t_{i+1}}) \cup \bigsqcup_{k=1}^l N_k^-,g_{t'}\Big)
$$
by surgery along the central spheres of each $N_k$, and replacing the negative halves of the necks by caps. It is clear that
 every point of $(P_j,g_{P_j})$, for each $1\leq j \leq m$, has a $(C,\varepsilon)$-canonical neighborhood, since the surgery caps are caps of type $B$. Because of the presence of
 surgery caps, it follows from 
Proposition \ref{canonical.neighborhood} that the manifolds $(P_j,g_{P_j})$ are diffeomorphic to either $S^3$ or $\re P^3$, and can be deformed to constant curvature manifolds
through positive scalar curvature metrics.

Let $(\hat{M},\hat{g})$ be the compact 3-manifold obtained from $(M_{t'},g_{t'})$ by replacing each region
$$
(\psi_k(S^2 \times (-4,4)),g_{t'})
$$
 with the connected sum
 $$
(\mathcal{S}_k^- \#\, \mathcal{S}_k^+,(g_{t'})_{surg,\delta}^- \# (g_{t'})_{surg,\delta}^+).
$$
The manifold $(\hat{M},\hat{g})$ is a Gromov-Lawson connected sum of the components of  $(\tilde{M}_{t'},\tilde{g}_{t'})$, and
$(P_j,g_{P_j})_{j=1,\dots,m}$. The connected sums are performed at the tips of the surgery caps with sufficiently small
parameters. The parameters are chosen fixed so that the construction applies  to every element in the continuous deformations of the components of $(\tilde{M}_{t'},\tilde{g}_{t'})$ into canonical metrics, and of $(P_j,g_{P_j})_{j=1,\dots,m}$ into constant curvature metrics. 

Therefore it follows from Proposition \ref{connected.sum.continuity} that each component of  $(\hat{M},\hat{g})$ is isotopic to a Gromov-Lawson connected sum of finitely many components endowed with
canonical metrics. It follows from Lemma \ref{canonical.metrics} that  each component of $(\hat{M},\hat{g})$ is isotopic to a canonical metric. 

The manifold $(\hat{M},\hat{g})$ can be continuously deformed back into $(M_{t'},g_{t'})$ through metrics of positive
scalar curvature by Lemma \ref{undoing.surgery}. The Claim follows by using the standard Ricci flow $(g_i(t))_{t\in [t_i,t']}$,
to connect  $(M_{t'},g_{t'})$ and $(M_{t_i},g_{t_i})$.

{\it Claim 2.} $\mathcal{A}_j$ holds.

Since $M_{t_{j+1}}=\emptyset$, there exists $\eta>0$ such that every point of $(M_j,g_j(t))$ has a $(C,\varepsilon)$-canonical neighborhood for all $t \in [t_{j+1}-\eta,t_{j+1})$. Since the standard Ricci flow $(M_j,g_j(t))_{t\in [t_j,t_{j+1}-\eta]}$ is a 
continuous path of positive scalar curvature metrics on $M_j$, the Proposition \ref{canonical.neighborhood} implies that $\mathcal{A}_j$ holds.

It follows from backwards induction on $i$ that $\mathcal{A}_0$ holds. Therefore any metric of positive scalar curvature on $M^3$
can be continuosly deformed, through metrics of positive scalar curvature, into a canonical metric. This proves that the moduli space
$\mathcal{R}_+(M)/{\rm Diff}(M)$ is path-connected.


\end{proof}

\section{Applications to General Relativity}\label{relativity}

In this section we will give some applications of Corollary \ref{sphere} to General Relativity. We will prove the path-connectedness of three different spaces of asymptotically flat metrics on $\re^3$: scalar-flat metrics, nonnegative
scalar curvature metrics, and 
trace-free solutions to the Vacuum Constraint Equations. The result about the moduli space
for other 3-manifolds  (with finitely many Euclidean ends and different inside topology) can be derived by adapting the arguments we will use for $\re^3$ - see the final remark of this section.  We refer the reader to  \cite{BARTNIKISENBERG04} for a nice survey  on
the constraint equations.

Let $0 < \alpha < 1$ be a fixed number.

 The weighted H\"{o}lder space $C^{k,\alpha}_\beta(\re^3)$ is defined  as the set of functions $u \in C^{k,\alpha}_{loc}(\re^3)$ such that the norm
 \begin{eqnarray*}
 \|u\|_{C_\beta^{k,\alpha}} &=& \sum_{i=0}^k \sup_{x \in \re^3} \rho^{i-\beta}(x) |\nabla^i u|(x) \\
 &&+ \sup_{x,y \in \re^3} (\min \rho(x), \rho(y))^{k+\alpha-\beta}\frac{|\nabla^ku(x)-\nabla^ku(y)|}{|x-y|^\alpha}
 \end{eqnarray*}
 is finite, where $\rho(x) = (1+|x|^2)^\frac12$.
 
 It is also convenient to consider the spaces $D^{k,\alpha}_{-3}= C^{k,\alpha}_{-3} \cap L^1$ for $k \geq 0$, and 
 $E^{k,\alpha}_{-1} = \{u \in C^{k,\alpha}_{-1}:\Delta u \in L^1\}$ for $k \geq 2$ (see \cite{SMITHWEINSTEIN04}), with the norms 
 \begin{eqnarray*}
 \|f\|_{D^{k,\alpha}_{-3}} &=& \|f\|_{C^{k,\alpha}_{-3}} + \|f\|_{L^1},\\
 \|v\|_{E^{k,\alpha}_{-1}} &=& \|v\|_{C^{k,\alpha}_{-1}} + \|\Delta v\|_{L^1}.
 \end{eqnarray*}
 Here $\Delta$ denotes the Euclidean Laplacian.
 
 We will need the following result which can be found in \cite{SMITHWEINSTEIN04}:
 \begin{thm}[\cite{SMITHWEINSTEIN04}]\label{fredholm.alternative}
 Let $g$ be a metric on $\re^3$ such that $g-\delta \in C^{k-1,\alpha}_{-\tau}$, for $\tau >0$. Suppose $h \in C^{k-2,\alpha}_{-\nu}$ is 
 a function, $\nu >2$. The operator 
 $$
 \Delta_g - h: E^{k,\alpha}_{-1} \g D^{k-2,\alpha}_{-3}
 $$
 is an isomorphism if and only if it is injective.
 \end{thm}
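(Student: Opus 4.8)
The statement is a standard application of the Fredholm theory of asymptotically Euclidean elliptic operators on weighted H\"older spaces, and the plan is to reduce it to an index computation. The ``only if'' direction is trivial, so the content is the ``if'' direction, for which the strategy is to show that $P:=\Delta_g-h$ is a Fredholm operator of \emph{index zero} from $E^{k,\alpha}_{-1}$ to $D^{k-2,\alpha}_{-3}$; granting this, injectivity of $P$ forces the cokernel to vanish, and since a Fredholm operator has closed range the open mapping theorem then yields that $P$ is an isomorphism. One first records that $P$ is well defined and bounded between the stated spaces: writing $Pu=\Delta u+(\Delta_g-\Delta)u-hu$, the term $hu$ lies in $C^{k-2,\alpha}_{-\nu-1}\cap L^1\subset D^{k-2,\alpha}_{-3}$ precisely because $\nu>2$, and $(\Delta_g-\Delta)u$ — a combination of $(g^{ij}-\delta^{ij})\partial_i\partial_j u$ with first order terms whose coefficients are built from $g-\delta\in C^{k-1,\alpha}_{-\tau}$ — lies in $C^{k-2,\alpha}_{-3-\tau}\cap L^1\subset D^{k-2,\alpha}_{-3}$ precisely because $\tau>0$; since $\Delta u\in L^1$ by the very definition of $E^{k,\alpha}_{-1}$ and interior Schauder estimates give $\Delta u\in C^{k-2,\alpha}_{-3}$, the map $P$ is bounded with the correct target.

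The heart of the matter is the flat model: the Euclidean Laplacian $\Delta:E^{k,\alpha}_{-1}\to D^{k-2,\alpha}_{-3}$ is an isomorphism. This is exactly why the spaces $E$ and $D$ were introduced — the weight $-1$ is the borderline (exceptional) indicial weight for $\Delta$ on $\re^3$, and the $L^1$ conditions are the correction that restores invertibility there. For surjectivity, given $f\in D^{k-2,\alpha}_{-3}$ one forms the Newtonian potential $u(x)=-\tfrac1{4\pi}\int_{\re^3}|x-y|^{-1}f(y)\,dy$: then $\Delta u=f$, the hypothesis $f\in L^1$ gives the decay $u(x)=O(|x|^{-1})$, and the hypothesis $f\in C^{k-2,\alpha}_{-3}$ together with interior (weighted) Schauder estimates gives the appropriate weighted decay of the derivatives of $u$, so $u\in C^{k,\alpha}_{-1}$; since $\Delta u=f\in L^1$ we conclude $u\in E^{k,\alpha}_{-1}$. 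Injectivity is the maximum principle: a harmonic function on $\re^3$ tending to zero at infinity is identically zero. I would cite \cite{SMITHWEINSTEIN04} (and the weighted Schauder theory it relies on) for the decay estimates, which are the one genuinely technical ingredient.

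With the flat model in hand the rest is routine perturbation theory. Write $P=\Delta+L$ with $L=(\Delta_g-\Delta)-h$; by the first paragraph $L$ is bounded from $E^{k,\alpha}_{-1}$ to $D^{k-2,\alpha}_{-3}$ and, crucially, all of its coefficients decay at infinity. Combining the exterior estimate coming from the flat isomorphism applied on $\re^3\setminus B_R$ — where for $R$ large the operator norm of the restriction of $L$ is as small as one likes, by the decay of its coefficients — with interior Schauder estimates on $B_{2R}$, one obtains the semi-Fredholm estimate
\[
\|u\|_{E^{k,\alpha}_{-1}}\le C\big(\|Pu\|_{D^{k-2,\alpha}_{-3}}+\|u\|_{C^0(\overline{B_R})}\big).
\]
Since the inclusion of a bounded subset of $E^{k,\alpha}_{-1}$ into $C^0(\overline{B_R})$ is precompact, this estimate implies $P$ has closed range and finite-dimensional kernel; the same estimate applied to the formal adjoint (in the sense of weighted duality, cf. \cite{SMITHWEINSTEIN04}) gives finite-dimensional cokernel, so $P$ is Fredholm. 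Finally, the family $P_t=\Delta+tL$, $t\in[0,1]$, satisfies the displayed estimate uniformly in $t$ (the coefficients of $tL$ decay at least as fast as those of $L$), hence is a norm-continuous family of Fredholm operators, so $\mathrm{ind}\,P=\mathrm{ind}\,P_1=\mathrm{ind}\,P_0=\mathrm{ind}\,\Delta=0$. Together with injectivity this yields the isomorphism.

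The main obstacle is the flat model step — the weighted elliptic theory for $\Delta$ on $\re^3$ at the borderline weight: one must verify both that the Newtonian potential of an $L^1\cap C^{k-2,\alpha}_{-3}$ function genuinely lands in $C^{k,\alpha}_{-1}$ (a weighted Schauder/decay statement, not merely the naive $1/|x|$ bound) and that the modified spaces $E$ and $D$ are precisely the ones making $\Delta$ invertible. This is the content borrowed from \cite{SMITHWEINSTEIN04}; once it is available, the perturbation analysis and the homotopy-invariance of the index are standard.
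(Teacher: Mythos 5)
A preliminary remark: the paper does not prove this theorem at all --- it is quoted verbatim from \cite{SMITHWEINSTEIN04} as an external input --- so there is no internal proof to compare yours against; what follows assesses your proposal on its own terms. Your route is the standard one and is essentially right in outline: only the ``if'' direction has content; the flat operator $\Delta:E^{k,\alpha}_{-1}\to D^{k-2,\alpha}_{-3}$ is an isomorphism (injectivity from the maximum principle, surjectivity from the Newtonian potential, where the $L^1$ conditions on both sides are exactly what repair the exceptional weight $-1$ on $\re^3$, and the derivative decay comes from scaled Schauder estimates on exterior balls); the error $L=(\Delta_g-\Delta)-h$ maps $E^{k,\alpha}_{-1}$ into $C^{k-2,\alpha}_{-3-\sigma}\cap L^1$ with $\sigma=\min(\tau,\nu-2)>0$; a cutoff/absorption argument plus interior Schauder gives the semi-Fredholm estimate; and homotopy invariance of the index along $P_t=\Delta+tL$ reduces everything to $\mathrm{ind}\,\Delta=0$, so injectivity forces surjectivity. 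Your boundedness check and the flat model are correct as sketched.

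The one genuine gap is the sentence that disposes of the cokernel: ``the same estimate applied to the formal adjoint (in the sense of weighted duality) gives finite-dimensional cokernel.'' In weighted H\"older spaces this is not a routine duality argument: these spaces are not reflexive, their duals are not spaces of the same type, and you cannot fall back on ``compact perturbation of an isomorphism'' either, because $L$ is \emph{not} compact from $E^{k,\alpha}_{-1}$ to $D^{k-2,\alpha}_{-3}$ --- multiplication by a decaying coefficient gains weight but no regularity, so Arzel\`a--Ascoli only yields convergence in a strictly weaker H\"older norm. As written, then, neither the Fredholm property of each $P_t$ nor the index computation is actually established. The standard repair is to prove Fredholmness and $\mathrm{ind}=0$ first in weighted Sobolev spaces (where the formal adjoint acts between the dual weights and is again an operator of the same asymptotically Euclidean class, so kernel and cokernel are both finite-dimensional), and then transfer the kernel/cokernel and regularity statements to the H\"older setting by elliptic regularity together with the weight bookkeeping you already carried out; this is in effect what the weighted elliptic theory invoked by \cite{SMITHWEINSTEIN04} provides. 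With that step supplied, the rest of your argument (semi-Fredholm estimate, homotopy of the index, injective index-zero operator is an isomorphism) goes through.
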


 Let 
 $$
 \mathcal{M}_{1} = \{{\rm metrics \ } g {\rm \ on \ } \re^3: g_{ij}-\delta_{ij} \in C_{-1}^{2,\alpha} {\rm \ and \ } R_g = 0\}.
 $$

 \begin{prop}\label{deforming.infinity}
 Let $g \in \mathcal{M}_{1}$. Then there exists a $C_{-1}^{2,\alpha}$-continuous path $\mu \in [0,1] \g g_\mu \in \mathcal{M}_1$ such that
 $g_0=g$,  $g_1$ is smooth everywhere and  conformally flat  outside a compact set.
 \end{prop}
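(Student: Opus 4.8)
The plan is to flatten $g$ near infinity by an explicit cut‑off and then restore $R=0$ by a controlled conformal change, staying inside $\mathcal{M}_1$ all along. Fix $\chi\in C^\infty_c(\re^3)$ with $\chi\equiv 1$ on $B_1$ and $\chi\equiv 0$ outside $B_{2}$, and set $\chi_R(x)=\chi(x/R)$. Let $g_{\mathrm{sm}}$ be a smooth metric obtained by mollifying $g$ at a small scale, so close to $g$ that $\|g_{\mathrm{sm}}-g\|_{C^{2,\alpha}(B_{2R})}<\epsilon$, and put $\hat g=\chi_R\,g_{\mathrm{sm}}+(1-\chi_R)\,\delta$, which is smooth everywhere and equal to $\delta$ outside $B_{2R}$. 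First I would introduce the trial path $\bar g_\mu=(1-\mu)g+\mu\hat g$, $\mu\in[0,1]$, a real‑analytic (hence $C^{2,\alpha}_{-1}$‑continuous) family of asymptotically flat metrics with $\bar g_0=g$ and $\bar g_1=\hat g$.

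Next I would estimate $R_{\bar g_\mu}$ and show it lies in $D^{0,\alpha}_{-3}$, continuously in $\mu$. On $B_R$ the metric $\bar g_\mu$ is $C^{2,\alpha}$ on a fixed compact set, and on the annulus $B_{2R}\setminus B_R$ its perturbation from $\delta$ and the first two derivatives are of size $O(R^{-1}),O(R^{-2}),O(R^{-3})$, so there $R_{\bar g_\mu}$ is a bounded function with compact support; these contributions lie in $L^1\cap C^{0,\alpha}_{-\nu}$ for every $\nu$. Outside $B_{2R}$ one has $\bar g_\mu=\delta+(1-\mu)(g-\delta)$, and here the hypothesis $R_g=0$ is used essentially: setting $F(s)=R[\delta+s(g-\delta)](x)$, one has $F(0)=F(1)=0$ while $|F^{(k)}(0)|=O(|x|^{-k-2})$, so solving for $F'(0)$ in $\sum_{k\ge1}F^{(k)}(0)/k!=0$ forces $F'(0)=O(|x|^{-4})$, whence $R_{\bar g_\mu}=F(1-\mu)=O(|x|^{-4})$ uniformly in $\mu$; thus $R_{\bar g_\mu}\in C^{0,\alpha}_{-4}\cap L^1\subset D^{0,\alpha}_{-3}$. (Without $R_g=0$ the linearized term would only be $O(|x|^{-3})$, which is not integrable on $\re^3$; this is the only place scalar‑flatness of $g$ is used.) Along the way one records $\sup_{\re^3}|R_{\bar g_\mu}|\le C(\epsilon+R^{-3})$.

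Then I would conformally correct: for each $\mu$ solve $(\Delta_{\bar g_\mu}-\tfrac18 R_{\bar g_\mu})v_\mu=\tfrac18 R_{\bar g_\mu}$ in $E^{2,\alpha}_{-1}$, so that $u_\mu=1+v_\mu$ satisfies $L_{\bar g_\mu}u_\mu=0$ and $g_\mu:=u_\mu^4\bar g_\mu$ has $R_{g_\mu}=-8u_\mu^{-5}L_{\bar g_\mu}u_\mu=0$. Theorem \ref{fredholm.alternative} applies since $\bar g_\mu-\delta\in C^{1,\alpha}_{-1}$ and $\tfrac18 R_{\bar g_\mu}\in C^{0,\alpha}_{-4}$, so it is enough to prove $\Delta_{\bar g_\mu}-\tfrac18 R_{\bar g_\mu}$ injective on $E^{2,\alpha}_{-1}$, uniformly in $\mu$. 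If $u$ lies in the kernel, integration by parts (legitimate because $u=O(|x|^{-1})$ and $\Delta u\in L^1$) gives $\int_{\re^3}|\nabla u|_{\bar g_\mu}^2\,dV=-\tfrac18\int_{\re^3}R_{\bar g_\mu}u^2\,dV$; splitting the right side over $B_R$, the annulus, and $\{|x|\ge 2R\}$, and using the uniform Sobolev inequality on the asymptotically flat $(\re^3,\bar g_\mu)$, one bounds it by $C(\epsilon R^{2}+R^{-1})\int_{\re^3}|\nabla u|^2$. Choosing first $R$ large and then $\epsilon$ small makes the coefficient $<1$, forcing $u\equiv0$. Hence each $\Delta_{\bar g_\mu}-\tfrac18 R_{\bar g_\mu}$ is an isomorphism, $v_\mu$ exists uniquely, and $\mu\mapsto v_\mu$ is continuous in $E^{2,\alpha}_{-1}$ (continuity of the operator family and of the right‑hand side in $D^{0,\alpha}_{-3}$, plus uniform boundedness of the inverses on $[0,1]$); for $R$ large and $\epsilon$ small, $\|v_\mu\|_{C^0}$ is small, so $u_\mu>0$.

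Finally I would assemble: $g_\mu=u_\mu^4\bar g_\mu$ satisfies $g_\mu-\delta\in C^{2,\alpha}_{-1}$ and $R_{g_\mu}=0$, so $g_\mu\in\mathcal{M}_1$, and $\mu\mapsto g_\mu$ is $C^{2,\alpha}_{-1}$‑continuous. At $\mu=0$, $R_{\bar g_0}=R_g=0$, so $v_0$ solves $\Delta_g v_0=0$ in $E^{2,\alpha}_{-1}$ and $v_0\equiv0$ by the injectivity above, hence $g_0=g$. At $\mu=1$, $\bar g_1=\hat g$ is smooth everywhere and $=\delta$ outside $B_{2R}$, so by elliptic regularity $v_1$ is smooth and $g_1=u_1^4\hat g$ is smooth everywhere, while outside $B_{2R}$ it equals $u_1^4\,\delta$, which is conformally flat. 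This produces the required path. The hard part is the uniform invertibility of the conformal Laplacians of the trial metrics — the hypothesis of Theorem \ref{fredholm.alternative} that has to be verified — together with the integrability near infinity of their scalar curvatures; both force the careful choice of $R$ and $\epsilon$, and the latter genuinely relies on $R_g=0$.
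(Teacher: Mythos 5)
Your construction is essentially the paper's: cut off and smooth $g$ near infinity, interpolate linearly to get $\bar g_\mu$, use $R_g=0$ to see that the linearized scalar curvature $\sum_{i,j}(\d_i\d_j g_{ij}-\d_i\d_i g_{jj})$ is $O(|x|^{-4})$ so that $R_{\bar g_\mu}\in L^1\cap C^{0,\alpha}_{-3}$, and then remove the scalar curvature conformally via Theorem \ref{fredholm.alternative}. The one point where you take a different route is invertibility of the conformal Laplacians: the paper observes that $\Delta_g:E^{2,\alpha}_{-1}\g D^{0,\alpha}_{-3}$ is injective by the maximum principle, hence an isomorphism, and that $L_{g_{R,\mu}}$ is close to $\Delta_g$ in operator norm for $R$ large (the extra decay of the argument $u=O(\rho^{-1})$ is what makes multiplication by $R_{g_{R,\mu}}$ small as an operator, even though $R_{g_{R,\mu}}$ itself is not small); you instead prove injectivity of each $L_{\bar g_\mu}$ directly by integration by parts, H\"older and Sobolev, with coefficient $C(\epsilon R^2+R^{-1})<1$. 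Both work, and your version is self-contained and quantitative.

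The step you should repair is the positivity of $u_\mu=1+v_\mu$. You justify it by saying $\|v_\mu\|_{C^0}$ is small, deduced from uniform boundedness of the inverses; but the source $\tfrac18 R_{\bar g_\mu}$ is \emph{not} small in $D^{0,\alpha}_{-3}$: on the transition annulus $\{R\le|x|\le 2R\}$ the cutoff produces scalar curvature of size $\sim R^{-3}$ on a region of volume $\sim R^3$, so its $L^1$ norm (and its $C^{0}_{-3}$ norm) remains of the order of the ADM mass of $g$ no matter how large $R$ is (integrate the linearized scalar curvature by parts: the inner boundary term is $\approx 16\pi m$). Hence the isomorphism bound only gives $\|v_\mu\|_{C^{2,\alpha}_{-1}}=O(1)$, not smallness, and your stated argument does not yield $u_\mu>0$. (The paper, to be fair, passes over positivity in silence.) Two standard fixes: either prove the unweighted bound $\|v_\mu\|_{C^0}=O(\epsilon R^2+R^{-1})$ directly, noting that the $O(1)$ part of the source is supported at radius $\ge R$, so its Newtonian potential is $O(R^{-1})$ everywhere (a barrier or Green's function estimate, not the weighted operator bound); or avoid smallness altogether by continuity in $\mu$: $u_0\equiv 1$, $\mu\mapsto u_\mu$ is continuous in $C^0$ with $u_\mu\g 1$ at infinity uniformly, and at a first parameter $\mu^*$ with $\min u_{\mu^*}=0$ one has $u_{\mu^*}\ge 0$ and $\Delta_{\bar g_{\mu^*}}u_{\mu^*}\le C\,u_{\mu^*}$, so the strong maximum principle forces $u_{\mu^*}\equiv 0$, contradicting the asymptotics. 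With that point fixed, the rest of your argument (the Taylor expansion giving $R_{\bar g_\mu}=O(|x|^{-4})$ outside $B_{2R}$, $v_0=0$ so $g_0=g$, smoothness and conformal flatness of $g_1$ outside a compact set by elliptic regularity) matches the paper.
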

 
\begin{proof}
 Let $0 \leq \eta\leq 1$ be a smooth cutoff function such that $\eta(t) = 1$ for $t \leq 1$ and $\eta(t)=0$ for $t \geq 2$. Set
 $\eta_R(t) = \eta(t/R)$ for $R>0$.
 
 Given $g \in \mathcal{M}_1$ and $R>0$, we define $g_R= (1-\eta_R) \delta + \eta_Rg$. We can also approximate $g_R$ by a smooth metric $g_R'$,
 such that $\|g_R-g_R'\|_{C^{2,\alpha}(B_{4R}(0))}$ is small and $g_R'=g_R=\delta$ if $|x| \geq 3R$.
 
 Given $\gamma >0$, it is not difficult to see that for any $\varepsilon>0$, there exists $R_0>0$ such that if $R \geq R_0$ and $\mu \in [0,1]$, we have
 $$
 \|g_{R,\mu} - g\|_{C^{2,\alpha}_{-1+\gamma}} \leq \varepsilon,
 $$
 where $g_{R,\mu} = (1-\mu)\, g + \mu\, g_R'$.
 
 It follows by the Maximum Principle that $\Delta_g:E^{2,\alpha}_{-1} \g D^{0,\alpha}_{-3}$ is injective, thus an isomorphism by Theorem 
 \ref{fredholm.alternative}. We can
 also check that the conformal Laplacian $L_{g_{R,\mu}}=\Delta_{g_{R,\mu}}-\frac18 R_{g_{R,\mu}}$ of $g_{R,\mu}$ is close to $\Delta_g:E^{2,\alpha}_{-1} \g D^{0,\alpha}_{-3}$
 in the operator norm for all $\mu \in [0,1]$, if $R$ is sufficiently large. In that case $L_{g_{R,\mu}}:E^{2,\alpha}_{-1} \g D^{0,\alpha}_{-3}$
 is an isomorphism.
 
 We fix $R$ sufficiently large. It is not difficult to check that if  $\overline{g}_{ij}-\delta_{ij} \in C^{2,\alpha}_{-1}$ and
 $\sum_{i,j} (\d_i\d_j\overline{g}_{ij}-\d_i\d_i\overline{g}_{jj}) \in L^1$, then $R_{\overline{g}} \in L^1$. Since $g \in \mathcal{M}_1$, we have
  $\sum_{i,j} (\d_i\d_jg_{ij}-\d_i\d_ig_{jj})(x)=O(|x|^{-4})$. Since $g_{R}'$ is flat outside a compact set, we conclude that 
  $R_{g_{R,\mu}} \in L^1$ for every $\mu \in [0,1]$.
  
  Let $v_{R,\mu} \in E^{2,\alpha}_{-1}$ be the unique solution to $L_{g_{R,\mu}}(v_{R,\mu})=\frac18 R_{g_{R,\mu}} \in  D^{0,\alpha}_{-3}$, and
 set $u_{R,\mu} =  1 + v_{R,\mu}$.
 
 The Proposition follows if we set  $g_\mu = u_{R,\mu}^4 g_{R,\mu}$.
 \end{proof}

 Let $I(z)=\frac{z}{|z|^2}$ be the inversion with respect to the unit sphere $\d B_1(0) \subset \re^3$. Notice that $I^*(\delta)(z)=|z|^{-4}\delta$.

 \begin{prop}\label{compactification.prop}
 Let $g \in \mathcal{M}_1$ be smooth and such that there exist $R>0$ and  a positive smooth function $u \in 1 + C^{2,\alpha}_{-1}$ with
 $g(z)=u^4(z) \delta$ if $|z| \geq 3R$. Given $p \in S^3$,  there exist a smooth metric $\overline{g}$ on $S^3$ of positive Yamabe quotient, and a diffeomorphism $\varphi:\re^3 \g S^3-\{p\}$ such that:
 \begin{itemize}
 \item [1)] $exp_{\, p, \overline{g}}^{-1}(\varphi(z)) = I(z)$ if $|z| \geq 4R$,
\item[2)] $\varphi_*(g)=G^4\overline{g}$, where $G$ is the Green's function of the conformal Laplacian $L_{\overline{g}}$ with pole at $p$, i.e., the
distributional solution to $L_{\overline{g}}(G)= -\sigma_{2}\, \delta_p$, where $\sigma_2={\rm area}(S_1^2(0))$.
\end{itemize}
 \end{prop}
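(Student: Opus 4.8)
The plan is to conformally compactify $(\re^3,g)$ by adding a point $p$ at infinity (via the inversion $I$ and the Kelvin transform) and then to recover $g$ from the compactified metric $\overline g$ through the Green's function of $L_{\overline g}$.

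\emph{Step 1: the geometry of $g$ near infinity.} Since $R_g=0$ and $g=u^4\delta$ on $\{|z|\ge 3R\}$, the conformal change formula for the scalar curvature gives $-8u^{-5}\Delta u=R_g=0$, so $u$ is Euclidean-harmonic on $\{|z|\ge 3R\}$; as $u-1\in C^{2,\alpha}_{-1}$ we have $u-1=O(|z|^{-1})$, and harmonicity upgrades this to $|\nabla^k(u-1)|=O(|z|^{-1-k})$ for every $k$. Writing $x=I(z)=z/|z|^2$ and taking the three-dimensional Kelvin transform, $\tilde P(x):=|x|^{-1}(u-1)(x/|x|^2)$ is harmonic on $\{0<|x|<1/(3R)\}$ and bounded near $0$, hence extends to a function harmonic, and in particular $C^\infty$, on the whole ball $\{|x|<1/(3R)\}$; thus $u\circ I=1+|x|\tilde P(x)$ there. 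Consequently, identifying $\{|z|>3R\}$ with the punctured ball $\{0<|x|<1/(3R)\}$ via $x=I(z)$, the metric $g$ becomes, in these coordinates, $(u\circ I)^4|x|^{-4}\delta=(1+|x|\tilde P)^4|x|^{-4}\delta$, and multiplying by $\Phi(x)^4:=(|x|/(1+|x|\tilde P(x)))^4$ one gets $\Phi^4\cdot g=\delta$. Hence the conformal class of $g$ extends smoothly across the added point, and it is the flat class there.

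\emph{Step 2: the compactification $(S^3,\overline g)$ and $\varphi$, and positivity of the Yamabe quotient.} Glue $\re^3$ to the ball $\{|x|<1/(3R)\}$ along $\{|z|>3R\}\cong\{0<|x|<1/(3R)\}$ by $x=I(z)$; the result is diffeomorphic to $S^3$, and we choose the identification so that the added point is the given $p$ and $\varphi:\re^3\to S^3\setminus\{p\}$ is the inclusion of the first chart, so that $\varphi(z)$ has $x$-coordinate $I(z)$ for $|z|>3R$. By Step 1 the conformal structures $[g]$ and $[\delta]$ agree on the overlap, so they glue to a smooth conformal structure $\mathfrak c$ on $S^3$, which is the flat class on the neighborhood of $p$ corresponding to $\{|z|>3R\}$; choose $\overline g$ a smooth representative of $\mathfrak c$ with $\overline g=\delta$ (in the $x$-coordinates) on $\{|x|\le 1/(4R)\}$. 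Then the $x$-coordinates are $\overline g$-normal coordinates at $p$, so $\exp_{p,\overline g}^{-1}(\varphi(z))=I(z)$ for $|z|\ge 4R$, which is property (1). Next, $[\overline g]=\mathfrak c$ has positive Yamabe quotient: otherwise there is $\overline g'\in\mathfrak c$ with $R_{\overline g'}\le 0$ everywhere; writing $\varphi^*\overline g'=\theta^4 g$ with $\theta>0$ and using $R_g=0$, the conformal change formula forces $\Delta_g\theta\ge 0$, while the smoothness and positivity of $\overline g'$ at $p$ force $\theta(z)\to 0$ as $|z|\to\infty$, and the maximum principle on a Euclidean-ball exhaustion then gives $\theta\equiv 0$, a contradiction. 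Hence $L_{\overline g}$ is invertible with positive Green's function, so the distributional solution $G$ of $L_{\overline g}G=-\sigma_2\delta_p$ exists, is $C^\infty$ and positive on $S^3\setminus\{p\}$, and near $p$ (where $\overline g=\delta$) satisfies $G=|x|^{-1}+H(x)$ with $H$ harmonic, hence smooth, on $\{|x|\le 1/(4R)\}$.

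\emph{Step 3: $\varphi_*g=G^4\overline g$, and the main obstacle.} The metrics $\varphi_*g$ and $G^4\overline g$ on $S^3\setminus\{p\}$ both lie in $\mathfrak c|_{S^3\setminus\{p\}}$, so $G^4\overline g=\psi^4\,\varphi_*g$ for a positive $\psi$. Both are scalar-flat, since $R_{\varphi_*g}=R_g=0$ and $R_{G^4\overline g}=-8G^{-5}L_{\overline g}G=0$ off $p$, so by the conformal change formula $\psi\circ\varphi$ is $g$-harmonic on $\re^3$; comparing the two metrics in the $x$-chart near $p$ via $G=|x|^{-1}+H$, $\overline g=\delta$, and $\varphi_*g=(1+|x|\tilde P)^4|x|^{-4}\delta$ gives $\psi=(1+|x|H)/(1+|x|\tilde P)\to 1$ as $|z|\to\infty$. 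A positive $g$-harmonic function on the complete asymptotically flat manifold $(\re^3,g)$ tending to $1$ at infinity is identically $1$ (maximum principle on a ball exhaustion), so $\psi\equiv 1$ and $\varphi_*g=G^4\overline g$, which is property (2). The heart of the matter is that scalar-flatness is used twice: first to force $u$ harmonic, so that the Kelvin transform yields a genuinely \emph{smooth} conformal compactification — this is precisely where $R_g=0$, rather than merely $R_g\ge 0$, and the earlier reduction to a $g$ that is conformally flat near infinity, are needed — and again, through the maximum principle, to force the Green's-function blow-up of $\overline g$ to equal $g$ exactly and not merely up to a conformal factor. The remaining points (the gluing diffeomorphism, a smooth representative of $\mathfrak c$ flat near $p$, and positivity of the Green's function once the Yamabe quotient is positive) are standard.
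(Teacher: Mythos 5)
Your argument is correct, but it reaches the conclusion by a genuinely different route than the paper. The paper makes everything explicit: it takes a smooth positive $v$ on $\re^3$ with $v(z)=|z|\,u(z)$ for $|z|\ge 3R$, sets $\overline{g}:=\varphi_*(v^{-4}g)$ (so that $\overline{g}=\psi_*(\delta)$ near $p$ and smoothness across $p$ is automatic), and simply \emph{defines} $G:=v\circ\varphi^{-1}$; then property (2) holds by construction, scalar-flatness of $\varphi_*(g)=G^4\overline{g}$ gives $L_{\overline{g}}G=0$ away from $p$, the asymptotics $|x|\,G(\psi(x))\to 1$ identify the distributional $\delta_p$-mass, and the positivity of the Yamabe quotient is read off from the existence of this positive Green's-type function. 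You instead (i) build the compactified conformal class abstractly, (ii) prove Yamabe positivity by a separate contradiction using a nonpositive-scalar-curvature representative and the maximum principle, (iii) invoke the general existence and positivity of the Green's function of $L_{\overline{g}}$, and (iv) identify $\varphi_*g$ with $G^4\overline{g}$ through a Liouville-type argument for the $g$-harmonic conformal factor tending to $1$. Your route is more modular and makes visible where each hypothesis enters, but it imports standard machinery (solvability of the nonpositive Yamabe case or the first eigenfunction of $L_{\overline{g}}$, existence/positivity of the Green's function) and needs the extra uniqueness step, none of which the paper requires. One small inaccuracy in your commentary: the Kelvin-transform/harmonicity detour in Step 1 is not where $R_g=0$ is essential, since $g$ is \emph{exactly} $u^4\delta$ outside a compact set, so $\delta$ itself is a smooth local representative of the conformal class near $p$ (this is exactly what the paper exploits with $g'=v^{-4}g=|z|^{-4}\delta=I^*(\delta)$ there); the place where $R_g=0$, rather than $R_g\ge 0$, is really used is in making the comparison factor harmonic (your Step 3), which the paper sidesteps entirely by constructing $G$ explicitly.
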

 Recall that a metric $g$ on a compact manifold is of positive Yamabe quotient if and only if
 $\lambda_1(L_g)>0$. This is equivalent to saying that the conformal class of $g$ contains a metric of positive scalar curvature.
 
 \begin{proof}
  Let $\psi: B_1(0) \subset \re^3 \g S^3$ be a coordinate chart with $\psi(0)=p$, and  let $\varphi:\re^3 \g S^3 -\{p\}$ be
 a diffeomorphism such that $\psi^{-1}(\varphi(z)) = I(z)$.  The maps $\psi$ and $\varphi$ can be chosen as inverses of stereographic
 projections, for instance.
 
 Let $g \in \mathcal{M}_1$ be smooth and such that there exists a positive smooth function $u \in 1 + C^{2,\alpha}_{-1}$ with
 $g(z)=u^4(z) \delta$ if $|z| \geq 3R$. Let $v : \re^3 \g \re$ be a smooth positive function such that $v(z)=|z|\, u(z)$ for $|z|  \geq 3R$. We define a metric $g'$ so that $g= v^4g'$. Hence $g'(z)=|z|^{-4}\delta = I^*(\delta)$ for $|z| \geq 3R$. 
 
 We define $\overline{g}=\varphi_*(g')$, so we can write 
 $\varphi_*(g)=[v \circ \varphi^{-1}]^4 \overline{g}$. Notice that, around $p$, 
 \begin{eqnarray*}
 \overline{g} &=& \varphi_*(|z|^{-4}\delta) \\
 &=& \psi_*(I_*(|z|^{-4} \delta))\\
 &=& \psi_*(\delta).
 \end{eqnarray*}
 Therefore $\psi^*(\overline{g})_{ij}=\delta_{ij}$. Hence $\psi=exp_{\, p, \overline{g}}$, and the assertion (1) follows.
 
 Set $G = v \circ \varphi^{-1}$. Since $\varphi_*(g)$ is scalar-flat,  we have that $L_{\overline{g}}(G)=0$ on $S^3\setminus \{p\}$.
 Since $G>0$ and $\lim_{x \g 0} |x|\, G(\psi(x)) \g 1$, we have that $G$ is a solution to $L_{\overline{g}}(G)=-\sigma_{2}\, \delta_p$ in the distributional sense. The existence of such a function implies that the Yamabe quotient of $(S^3,[\overline{g}])$ is positive. This finishes the proof.
 \end{proof}

 

 \begin{lem}\label{extending.diffeos}
 Let $\mu \in [0,1] \mapsto \overline{g}_\mu$   be a continuous family of smooth Riemannian metrics on $S^3$.  Let $p \in S^3$, and suppose
 that $\{e_i(\mu)\}_{\mu \in [0,1]}$ is a positively oriented $\overline{g}_\mu$-orthonormal basis of $T_pS^3$ depending continuously on $\mu$. Then there exist $M_0>0$ and  a continuous
 family of diffeomorphisms $\varphi_\mu : \re^3 \g S^3-\{p\}$ such that 
 $$
 \varphi_\mu (z) = exp_{\,p,\overline{g}_\mu}\Big(|z|^{-2} \sum_i z_i e_i(\mu)\Big),
 $$
 if $|z| \geq M_0$. 
 \end{lem}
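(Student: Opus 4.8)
The plan is to build $\varphi_\mu$ first in a neighbourhood of infinity by the prescribed formula, push it a little farther inward to obtain a collar, and then recognise the remaining task as extending a continuous family of boundary diffeomorphisms across a $3$-ball.

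\emph{Step 1 (near infinity).} Since $[0,1]$ is compact and $\mu\mapsto\overline g_\mu$ is continuous, there is $i_0>0$ with ${\rm inj}_{\overline g_\mu}(p)\ge i_0$ for every $\mu$. I would fix $M_0$ with $M_0^{-1}<i_0$ and, for $|z|\ge M_0$, define
$$
\varphi_\mu(z)=\exp_{\,p,\overline g_\mu}\Big(|z|^{-2}\textstyle\sum_i z_i\,e_i(\mu)\Big).
$$
Because $\big|\,|z|^{-2}\sum_i z_i e_i(\mu)\,\big|_{\overline g_\mu}=|z|^{-1}$ and $\exp_{\,p,\overline g_\mu}$ is a diffeomorphism from the $\overline g_\mu$-ball of radius $i_0$ in $T_pS^3$ onto $B_{\overline g_\mu}(p,i_0)$, this same formula actually defines $\varphi_\mu$ on the whole region $\{|z|>i_0^{-1}\}$ as a diffeomorphism onto $B_{\overline g_\mu}(p,i_0)\setminus\{p\}$, and it depends continuously on $\mu$ since $\exp_{\,p,\overline g_\mu}$ and the $e_i(\mu)$ do. In particular $\varphi_\mu$ is already defined and smooth on a neighbourhood in $\re^3$ of the sphere $\{|z|=M_0\}$, which maps onto the geodesic sphere $\Sigma_\mu:=\partial B_{\overline g_\mu}(p,M_0^{-1})$; this extra room will serve as a collar.

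\emph{Step 2 (reduction).} Set $D_\mu:=S^3\setminus B_{\overline g_\mu}(p,M_0^{-1})$, a compact smooth $3$-manifold whose boundary $\Sigma_\mu$ is a smoothly embedded $2$-sphere, hence (smooth Schoenflies) a closed $3$-ball, and varying continuously with $\mu$. Constructing the desired family $\varphi_\mu\colon\re^3\g S^3\setminus\{p\}$ extending the one of Step 1 is then the same as producing a continuous family of diffeomorphisms $\psi_\mu\colon\overline{B_{M_0}(0)}\g D_\mu$ that agrees with $\varphi_\mu$ on a fixed collar of $\{|z|=M_0\}$ inside $\overline{B_{M_0}(0)}$, so that the two pieces glue smoothly.

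\emph{Step 3 (filling in the ball).} For a single $\mu$ the existence of such a $\psi_\mu$ is elementary: after composing with a reference identification $\Sigma_\mu\cong S^2$ that extends over $D_\mu$, the boundary trace $\varphi_\mu|_{\{|z|=M_0\}}$ becomes a diffeomorphism of $S^2$, which is isotopic to a standard extendable one since ${\rm Diff}(S^2)$ has two path-connected components (Smale, \cite{SMALE59}); realising this isotopy inside a collar of $\partial D_\mu$ and matching it to the collar of Step 1 gives $\psi_\mu$. To make the family, I would subdivide $0=s_0<\dots<s_m=1$ finely enough that over each $[s_k,s_{k+1}]$ the balls $B_{\overline g_\mu}(p,M_0^{-1})$ are carried continuously onto $B_{\overline g_{s_k}}(p,M_0^{-1})$ by an ambient isotopy $F_\mu$ of $S^3$ with $F_{s_k}={\rm id}$ and, crucially, with $F_\mu$ chosen to equal $\varphi_\mu\circ\varphi_{s_k}^{-1}$ on a neighbourhood of $\Sigma_{s_k}$ — this is an application of the isotopy extension theorem to the continuously varying embedded spheres $\Sigma_\mu$. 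Carrying the invariant ``$\psi_\mu$ agrees with $\varphi_\mu$ near the boundary'' along, one sets $\psi_\mu:=F_\mu^{-1}\circ\psi_{s_k}$ for $\mu\in[s_k,s_{k+1}]$; this is a diffeomorphism onto $D_\mu$, it still agrees with $\varphi_\mu$ near $\{|z|=M_0\}$ by the choice of $F_\mu$, and at $\mu=s_{k+1}$ it is the starting datum for the next interval. Concatenating over $k$ yields the family $\psi_\mu$ on all of $[0,1]$, and gluing it to the Step 1 formula on $\{|z|\ge M_0\}$ produces $\varphi_\mu$.

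The genuinely delicate point is the continuity in $\mu$ in Step 3: pointwise existence of an extension is routine, but upgrading it to a continuous family is exactly what forces the use of the connectedness of ${\rm Diff}^{+}(S^2)$ together with the isotopy extension theorem (the left-to-right induction over a subdivision is what lets one avoid any appeal to deeper facts such as the contractibility of ${\rm Diff}(D^3,\partial D^3)$). Everything else — the uniform injectivity radius bound, the explicit formula near infinity, and the Schoenflies identification of $D_\mu$ — is standard.
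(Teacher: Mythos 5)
Your argument is essentially correct, but it takes a genuinely different route from the paper. The paper never fills in the complementary ball by hand: it sets $\psi_\mu=\exp_{p,\overline g_\mu}$, observes that the chart-transition maps $f_\mu=\psi_\mu\circ\psi_0^{-1}$ are a continuous family of orientation-preserving local diffeomorphisms fixing $p$, and invokes Palais's extension theorem (whose explicit construction, together with compactness of $[0,1]$, gives continuity in $\mu$) to produce ambient diffeomorphisms $F_\mu$ of $S^3$ equal to $f_\mu$ near $p$; it then builds a single $\varphi_0$ from stereographic projections plus one more Palais extension and sets $\varphi_\mu=F_\mu\circ\varphi_0$. Thus all the parameter dependence is concentrated in one family supported near $p$, and no Schoenflies/Alexander theorem or knowledge of $\pi_0\,{\rm Diff}(S^2)$ is needed. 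Your proof instead constructs the end directly, identifies the complement $D_\mu$ as a ball, extends the boundary data for a single $\mu$ via Smale's theorem on ${\rm Diff}(S^2)$, and propagates in $\mu$ by a stepwise isotopy-extension argument; this is heavier on classical $3$-dimensional topology, and the parametrized isotopy extension step needs the same kind of care about mere continuity in $\mu$ (e.g.\ the fibration property of the restriction map ${\rm Diff}(S^3)\to {\rm Emb}$, or smoothing in $\mu$) that the paper addresses by appealing to the explicitness of Palais's construction. Two small repairs: in Step 3 your directions are inconsistent --- if $F_\mu$ is to equal $\varphi_\mu\circ\varphi_{s_k}^{-1}$ near $\Sigma_{s_k}$ (so $F_{s_k}={\rm id}$ and $F_\mu(D_{s_k})=D_\mu$), you should set $\psi_\mu:=F_\mu\circ\psi_{s_k}$, not $F_\mu^{-1}\circ\psi_{s_k}$ (or else require $F_\mu=\varphi_{s_k}\circ\varphi_\mu^{-1}$ near $\Sigma_\mu$); and in Step 2--3 one should note that $D_\mu$ being a ball uses Alexander's theorem (or the disk theorem), which your argument tacitly assumes. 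Neither issue is a real gap; what each approach buys is clear: yours is more elementary in analytic input but needs more $3$-manifold topology, while the paper's reduction via $F_\mu$ is shorter and dimension-independent in spirit.
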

 \begin{proof}
 Let $\psi_\mu = exp_{\,p,\overline{g}_\mu}$. There exists $\beta>0$ such that $\psi_\mu:B_\beta(0)\subset \re^3 \g V_\mu \subset S^3$ is a continuous family of diffeomorphisms with $\psi_\mu(0)=p$ and $(\psi_\mu)_*\,\cdot\d_i=e_i(\mu)$ for all $\mu \in [0,1]$. Hence
 $f_\mu = \psi_\mu \circ \psi_0^{-1} : V_0 \g V_\mu$ is a continuous family of orientation preserving local diffeomorphisms which fix $p \in S^3$. 
 It follows from Theorem 5.5 in \cite{PALAIS59} that an orientation preserving local diffeomorphism which fixes $p$ coincides with an
 ambient diffeomorphism in a sufficiently small neighborhood of $p$. It is not difficult to check, due to the explicit constructions
 of \cite{PALAIS59} and since the interval $[0,1]$ is compact, that we can choose the extensions so that they depend continuosly on the 
 parameter $\mu$. Hence there exist a neighborhood $W_0$ of $p$ and a continuous family of ambient diffeomorphisms $F_\mu:S^3 \g S^3$, $\mu \in [0,1]$, such that $F_\mu = f_\mu$ on $W_0$. 
 
 Let us now suppose that we have found a diffeomorphism $\varphi_0:\re^3 \g S^3 -\{p\}$ such that $\psi_0^{-1} \circ \varphi_0 = I$ outside
 a compact set. Then we can define the diffeomorphism $\varphi_\mu = F_\mu \circ \varphi_0:\re^3 \g S^3 -\{p\}$, $\mu \in [0,1]$ and it follows
 that $\psi_\mu^{-1} \circ \varphi_\mu = I$ outside a fixed compact set. It remains to find $\varphi_0$.
 
 In order to do that we choose stereographic projections $\varphi$  and $\psi$ as in the proof of Proposition \ref{compactification.prop} so that
 $\psi^{-1} \circ \varphi = I$. Then we choose $h$ to be an ambient diffeomorphism of $S^3$  extending $\psi_0 \circ \psi^{-1}$, and  set
 $\varphi_0 = h \circ \varphi$. It follows immediately that $\varphi_0:\re^3 \g S^3 -\{p\}$ is a diffeomorphism and $\psi_0^{-1} \circ \varphi_0 = I$ outside a compact set. This finishes the proof of the lemma. 
 \end{proof}
 
 \begin{thm}\label{connectedness.1}
  The set $\mathcal{M}_{1}$ is path-connected in the $C^{2,\alpha}_{-1}$ topology.
 \end{thm}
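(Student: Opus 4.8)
The plan is to show that every $g \in \mathcal{M}_1$ lies in the same path component of $\mathcal{M}_1$ as the flat metric $\delta$. First, by Proposition \ref{deforming.infinity} we may connect $g$, through a continuous path in $\mathcal{M}_1$, to a scalar-flat metric $\tilde g$ that is smooth on all of $\re^3$ and equal to $u^4\delta$ for $|z|$ large, with $u$ a positive function in $1 + C^{2,\alpha}_{-1}$; this is exactly the form to which Proposition \ref{compactification.prop} applies. Fixing $p \in S^3$, that proposition furnishes a smooth metric $\overline{g}$ on $S^3$ with $\lambda_1(L_{\overline{g}}) > 0$, a diffeomorphism $\varphi \colon \re^3 \to S^3 \setminus \{p\}$ equal to an inversion followed by $\exp_{p,\overline{g}}$ near infinity, and the Green's function $G$ of $L_{\overline{g}}$ at $p$, with $\varphi_*(\tilde g) = G^4\overline{g}$. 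Since the metric $G^4\overline{g}$ on $S^3\setminus\{p\}$ depends only on the conformal class $[\overline{g}]$, I would replace $\overline{g}$ by a suitably normalized representative $\overline{g}^{+} = w^4\overline{g} \in \mathcal{R}_+(S^3)$ of positive scalar curvature, so that $\varphi_*(\tilde g) = (G^{+})^4\overline{g}^{+}$ with $G^{+}$ the Green's function of $L_{\overline{g}^{+}}$ at $p$.

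Now apply Corollary \ref{sphere}: there is a continuous path $\mu \in [0,1] \mapsto \overline{g}^{+}_\mu \in \mathcal{R}_+(S^3)$ with $\overline{g}^{+}_0 = \overline{g}^{+}$ and $\overline{g}^{+}_1$ the round metric of constant curvature. For each $\mu$, $L_{\overline{g}^{+}_\mu}$ on the compact manifold $S^3$ has positive first eigenvalue, hence is invertible, so its Green's function $G_\mu$ at $p$ (with a fixed normalization of the $\delta$-mass) exists, is positive, and depends continuously on $\mu$ by standard elliptic theory; in particular $G_0 = G^{+}$. Choosing continuously a positively oriented $\overline{g}^{+}_\mu$-orthonormal basis $\{e_i(\mu)\}$ of $T_pS^3$, Lemma \ref{extending.diffeos} gives a continuous family of diffeomorphisms $\varphi_\mu \colon \re^3 \to S^3 \setminus \{p\}$ agreeing with $z \mapsto \exp_{p,\overline{g}^{+}_\mu}(|z|^{-2}\sum_i z_i e_i(\mu))$ for $|z|$ large. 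Set $g_\mu := \varphi_\mu^*(G_\mu^4\, \overline{g}^{+}_\mu)$. Since $L_{\overline{g}^{+}_\mu} G_\mu = 0$ on $S^3 \setminus \{p\}$, each $g_\mu$ is scalar-flat; and the standard expansion $G_\mu = c\,(r^{-1} + A_\mu + O(r))$ in $\overline{g}^{+}_\mu$-normal coordinates about $p$ (with $r$ the $\overline{g}^{+}_\mu$-distance to $p$), together with the change of variables $r = |z|^{-1}$ coming from the inversion, shows that $g_{\mu,ij} - \delta_{ij} \in C^{2,\alpha}_{-1}$ and that $\mu \mapsto g_\mu$ is continuous in this norm. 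Thus $g_\mu \in \mathcal{M}_1$ for every $\mu$.

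It remains to identify the endpoints. At $\mu = 1$ the metric $G_1^4\overline{g}^{+}_1$ on $S^3\setminus\{p\}$ is, by the classical conformal identification of the punctured round sphere with Euclidean space, a complete flat metric, and since it equals $\delta$ near infinity under $\varphi_1$ it equals $\delta$ everywhere, i.e.\ $g_1 = \delta$. At $\mu = 0$ we have $g_0 = (\varphi^{-1}\circ\varphi_0)^*(\tilde g)$, because $\varphi_*(\tilde g) = (G^{+})^4\overline{g}^{+} = G_0^4\overline{g}^{+}_0$. With $w$ and the frames chosen so that $\exp_{p,\overline{g}^{+}}$ and $\exp_{p,\overline{g}}$ agree to sufficiently high order at $p$, the diffeomorphism $\Phi := \varphi^{-1}\circ\varphi_0$ of $\re^3$ satisfies $\Phi(z) = z + O(|z|^{-1})$ near infinity, hence is isotopic to the identity through diffeomorphisms with the same asymptotics; pulling back $\tilde g$ along such an isotopy gives a path in $\mathcal{M}_1$ from $\tilde g$ to $g_0$. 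Concatenating the three pieces, $g$ is isotopic within $\mathcal{M}_1$ to $\delta$; as $g$ was arbitrary, $\mathcal{M}_1$ is path-connected.

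The deep inputs (Corollary \ref{sphere}, Propositions \ref{deforming.infinity} and \ref{compactification.prop}, Lemma \ref{extending.diffeos}) being already available, the main work I expect is the uniform asymptotic bookkeeping: checking that $\mu \mapsto g_\mu$ is genuinely continuous in the weighted norm $C^{2,\alpha}_{-1}$, which requires control, uniform in $\mu$, of the Green's function expansion near $p$ and of its first two derivatives after the inversion; and the $\mu = 0$ matching, i.e.\ verifying that the residual diffeomorphism $\varphi^{-1}\circ\varphi_0$ of $\re^3$ is asymptotic to the identity and can be isotoped to the identity without leaving $\mathcal{M}_1$.
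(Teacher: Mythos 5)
Your overall route is the paper's: deform to a metric that is conformally flat near infinity (Proposition \ref{deforming.infinity}), compactify via Proposition \ref{compactification.prop}, move the compactified metric on $S^3$ using Corollary \ref{sphere}, transplant back with Lemma \ref{extending.diffeos} and the continuity of the Green's functions (Lee--Parker expansion), and finally dispose of a residual diffeomorphism of $\re^3$. The genuine problems are in your two endpoint identifications. At $\mu=1$ the claim that $g_1=\varphi_1^*(G_1^4\,\overline{g}^+_1)$ ``equals $\delta$ near infinity, hence everywhere'' is false as stated: $\varphi_1$ is built from inverted \emph{normal} coordinates of the round metric, which are not Euclidean coordinates for the blow-up, so $g_1$ is a flat metric equal to $\Psi^*\delta$ for the radial map $\Psi(z)=\tfrac12\cot\bigl(\tfrac{1}{2|z|}\bigr)\tfrac{z}{|z|}$; it is only asymptotic to $\delta$, not equal to it on any neighborhood of infinity. (This particular defect is repairable by an explicit radial isotopy of $\Psi$ to the identity, but the step as written fails; the paper avoids it altogether by connecting two arbitrary compactified metrics to each other rather than to $\delta$.) At $\mu=0$, your ``hence is isotopic to the identity through diffeomorphisms with the same asymptotics'' is not a consequence of $\Phi(z)=z+O(|z|^{-1})$: the statement that an asymptotically trivial (or compactly supported) diffeomorphism of $\re^3$ is isotopic to the identity through such diffeomorphisms is exactly Cerf's theorem ($\Gamma_4=0$), which is the precise point where the paper cites \cite{CERF68}. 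As written, your argument treats this deep input as automatic, and without it the endpoint matching does not close.

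A secondary issue created by your deviation from the paper: by replacing $\overline g$ with a positive scalar curvature representative $\overline g^{+}=w^4\overline g$ before applying Corollary \ref{sphere}, you change the exponential map at $p$, which is why your residual diffeomorphism is only asymptotically trivial in the first place. The paper instead keeps $\overline g^{(i)}$ (flat near $p$, only of positive Yamabe quotient) as the endpoints of the path and only needs $\lambda_1(L_{\overline g_\mu})>0$ along the path for the Green's functions; then $(\varphi^{(i)})^{-1}\circ\varphi_i$ is literally the identity outside a compact set, and Cerf applies cleanly. Your fix ``choose $w$ and the frames so that the exponential maps agree to sufficiently high order at $p$'' also needs care: since $\overline g$ is flat near $p$, a conformal factor with vanishing Hessian at $p$ cannot give positive scalar curvature there, so you can arrange $w(p)=1$, $dw(p)=0$ but not second-order osculation; first-order agreement does suffice for $\Phi(z)=z+O(|z|^{-1})$ (with corresponding derivative decay), but this must be checked rather than asserted. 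In short: same skeleton as the paper, but the endpoint step needs Cerf's theorem (or the paper's arrangement making the residual diffeomorphism compactly supported), and the $\mu=1$ identification must be corrected.
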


 \begin{proof}
 It suffices to prove that any two metrics satisfying the assumptions of Proposition \ref{compactification.prop} are in
 the same path-connected component of $\mathcal{M}_{1}$.
 
 Let $g^{(0)},g^{(1)} \in \mathcal{M}_1$ be smooth metrics which are conformally flat outside a compact set. Given $p \in S^3$, 
 it
 follows from Proposition \ref{compactification.prop} that there exist  smooth metrics $\overline{g}^{(0)},\overline{g}^{(1)}$ on $S^3$ of positive Yamabe quotient, and  diffeomorphisms $\varphi^{(0)},\varphi^{(1)}:\re^3 \g S^3-\{p\}$ such that:
 \begin{itemize}
 \item [1)] $exp_{\, p, \overline{g}^{(i)}}^{-1}(\varphi^{(i)}(z)) = I(z)$ if $|z| \geq 4R$, $i=0,1$,
\item[2)] $\varphi^{(i)}_*(g^{(i)})=G_i^4\overline{g}^{(i)}$, where $G_i$ is the Green's function of the conformal Laplacian $L_{\overline{g}^{(i)}}$ with pole at $p$, $i=0,1$.
\end{itemize}

Since the conformal class of a metric of positive Yamabe quotient contains a positive scalar curvature metric, it follows from
 Corollary  \ref{sphere} that there exists a continuous family $(\overline{g}_\mu)_{\mu \in [0,1]}$ of $C^\infty$ Riemannian metrics
 on $S^3$  of positive Yamabe quotient such that $\overline{g}_0=\overline{g}^{(0)}$ and $\overline{g}_1 = \overline{g}^{(1)}$. Let
 $\varphi_\mu : \re^3 \g S^3-\{p\}$ be the family of diffeomorphisms given by  Lemma \ref{extending.diffeos}. If $G_\mu$ denotes the
 Green's function of $L_{\overline{g}_\mu}$ with pole at $p$, it follows then from standard arguments in elliptic linear theory and from the expansion of $G_\mu$ in
 inverted normal coordinates (see \cite{LEEPARKER87}) that the family $\varphi_\mu^*(G_\mu^4\, \overline{g}_\mu)$ is continuous in $\mathcal{M}_1$.
 
 It remains to prove that $\varphi_i^*(G_i^4\, \overline{g}^{(i)})$ and $g^{(i)}=(\varphi^{(i)})^*(G_i^4\, \overline{g}^{(i)})$ are in the same path-connected component of $\mathcal{M}_1$,
 $i=0,1$. In order to see that notice that, for each $i=0,1$, $(\varphi^{(i)})^{-1} \circ \varphi_i : \re^3 \g \re^3$ is a diffeomorphism which
 coincides with the identity outside a compact set. Then there exists a continuous family of diffeomorphisms $F_{\mu,i}:\re^3 \g \re^3$ (see \cite{CERF68})
 such that $F_{0,i} = (\varphi^{(i)})^{-1} \circ \varphi_i$,  $F_{1,i} = id$, and $F_{\mu,i}(z)=z$ for all $\mu \in [0,1]$ and $|z| \geq R_0$.
 Hence $F_{\mu,i}^*(g^{(i)})$ is a continuous path in $\mathcal{M}_1$ joining $\varphi_i^*(G_i^4\, \overline{g}^{(i)})$ and $g^{(i)}$. This
 finishes the proof of the theorem.
 \end{proof}

 Let 
 $$
 \mathcal{M}_{2} = \{{\rm metrics \ } g {\rm \ on \ } \re^3: g_{ij}-\delta_{ij} \in C_{-1}^{2,\alpha}, R_g \in L^1 {\rm \ and \ } R_g \geq 0\}.
 $$
 
\begin{thm}\label{connectedness.2}
  The set $\mathcal{M}_{2}$ is path-connected in the $C^{2,\alpha}_{-1}$ topology.
 \end{thm}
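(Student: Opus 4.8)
The plan is to reduce Theorem~\ref{connectedness.2} to Theorem~\ref{connectedness.1} via the conformal method, exactly as suggested in the introduction. Given a metric $g \in \mathcal{M}_2$, I want to conformally deform it to a scalar-flat asymptotically flat metric $u^4 g \in \mathcal{M}_1$, do this continuously in $g$, and check that the endpoint construction can be connected within $\mathcal{M}_2$ to $g$ itself by the obvious linear path $\mu \mapsto ((1-\mu) + \mu u)^4 g$ (or rather the path of conformal factors interpolating between $1$ and $u$), which stays in $\mathcal{M}_2$ because the relevant set of conformal factors is convex — this is precisely the computation in Proposition~\ref{conformal.deformations}, since $R_{v^4 g} = v^{-5}(-8\Delta_g v + R_g v)$ in dimension $3$ and $R_g \geq 0$ forces $R_{v^4 g} \geq 0$ along the linear homotopy from $v=1$ (where $R_{1^4 g}=R_g\geq 0$) to $v=u$ (where $R_{u^4 g}=0$), provided each intermediate $v$ is positive and the Laplacian terms have the right sign; more carefully one uses that $\{v>0 : R_{v^4g}\geq 0\}$ is convex by the same linearity argument.

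First I would solve the Yamabe-type equation: given $g \in \mathcal{M}_2$, find $u = 1 + w$ with $w \in E^{2,\alpha}_{-1}$ solving $L_g u = \Delta_g u - \tfrac18 R_g u = 0$, i.e. $\Delta_g w - \tfrac18 R_g w = \tfrac18 R_g$. Since $R_g \in L^1$, $R_g \geq 0$, and $R_g \in C^{0,\alpha}_{-\nu}$ for suitable decay coming from $g-\delta \in C^{2,\alpha}_{-1}$ (as in the proof of Proposition~\ref{deforming.infinity}), the operator $\Delta_g - \tfrac18 R_g : E^{2,\alpha}_{-1} \to D^{0,\alpha}_{-3}$ is injective by the maximum principle — a nonnegative potential allows no nontrivial decaying element in its kernel — hence an isomorphism by Theorem~\ref{fredholm.alternative}. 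This gives a unique $w$, and the maximum principle gives $u = 1+w > 0$. Thus $\hat g := u^4 g$ is scalar-flat, and $\hat g - \delta \in C^{2,\alpha}_{-1}$ since $w \in C^{2,\alpha}_{-1}$; so $\hat g \in \mathcal{M}_1$. By Theorem~\ref{connectedness.1}, $\hat g$ lies in the path component of the flat metric $\delta$ inside $\mathcal{M}_1 \subset \mathcal{M}_2$.

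Next I would connect $g$ to $\hat g$ inside $\mathcal{M}_2$. Consider $g_\mu = ((1-\mu) + \mu u)^4 g$ for $\mu \in [0,1]$. Writing $v_\mu = (1-\mu) + \mu u = 1 + \mu w$, we have $v_\mu > 0$ (convex combination of positive functions), $v_\mu - 1 = \mu w \in C^{2,\alpha}_{-1}$, so $g_\mu - \delta \in C^{2,\alpha}_{-1}$; and $R_{g_\mu} = v_\mu^{-5}\bigl(-8\Delta_g v_\mu + R_g v_\mu\bigr) = v_\mu^{-5}\bigl((1-\mu)R_g + \mu(-8\Delta_g u + R_g u)\bigr) = v_\mu^{-5}(1-\mu)R_g \geq 0$, using $L_g u = 0$, i.e. $-8\Delta_g u + R_g u = 0$. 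One also checks $R_{g_\mu} = v_\mu^{-5}(1-\mu)R_g \in L^1$ since $R_g \in L^1$ and $v_\mu^{-5}$ is bounded. Finally continuity of $\mu \mapsto g_\mu$ in the $C^{2,\alpha}_{-1}$ topology is clear, and continuity of the \emph{whole} assignment $g \mapsto \hat g$ in $\mathcal{M}_2$ (which one needs to patch the above into a genuine path component statement) follows from continuous dependence of the solution $w$ on the coefficients of the isomorphism $\Delta_g - \tfrac18 R_g$, by standard linear elliptic theory on weighted spaces, exactly as invoked in the proof of Theorem~\ref{connectedness.1}.

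The main obstacle is the invertibility/continuity input: one must make sure $R_g$, which only lies in $L^1$ and has whatever pointwise decay is forced by $g-\delta\in C^{2,\alpha}_{-1}$, actually lands in a weighted H\"older space $C^{0,\alpha}_{-\nu}$ with $\nu>2$ so that Theorem~\ref{fredholm.alternative} applies with $h = \tfrac18 R_g$; the pointwise bound $R_g(x) = O(|x|^{-3})$ coming from two derivatives of an $O(|x|^{-1})$ perturbation is borderline, so one may need to argue, as in Proposition~\ref{deforming.infinity}, by first deforming $g$ within $\mathcal{M}_2$ to a metric that is flat (hence scalar-flat) outside a compact set — using a cutoff $g_R = (1-\eta_R)\delta + \eta_R g$, which keeps $R_{g_R}\geq 0$? — no, cutting off need not preserve sign, so instead one deforms and then re-solves; but since $R_g\ge 0$ this sign issue must be handled with care. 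The cleanest route is: apply the above conformal solve directly, noting that injectivity of $\Delta_g - \tfrac18 R_g$ on $E^{2,\alpha}_{-1}$ needs only $R_g\ge 0$ together with the mapping property, and that $R_g\in D^{0,\alpha}_{-3}$ holds under the stated hypotheses by the computation in Proposition~\ref{deforming.infinity} (that $\sum_{i,j}(\partial_i\partial_j g_{ij}-\partial_i\partial_i g_{jj})\in L^1$ together with the decay of $g-\delta$ gives $R_g\in D^{0,\alpha}_{-3}$); once this is secured everything else is routine.
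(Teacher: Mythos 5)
Your proposal is correct and follows essentially the same route as the paper's own proof: solve $L_g u=0$ with $u=1+v$, $v\in E^{2,\alpha}_{-1}$, via the Maximum Principle and Theorem \ref{fredholm.alternative}, connect $g$ to $u^4g\in\mathcal{M}_1$ by the linear conformal path $u_\mu=(1-\mu)+\mu u$, and conclude by Theorem \ref{connectedness.1} since $\mathcal{M}_1\subset\mathcal{M}_2$. The hedging at the end is unnecessary: $g-\delta\in C^{2,\alpha}_{-1}$ already gives $R_g\in C^{0,\alpha}_{-3}$ (so $\nu=3>2$ suffices for the potential), and $R_g\in L^1$ is part of the definition of $\mathcal{M}_2$, so $R_g\in D^{0,\alpha}_{-3}$ directly; likewise continuity of $g\mapsto u^4g$ is not needed, since one only concatenates individual paths with paths inside $\mathcal{M}_1$.
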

 
 \begin{proof}
 Let $g \in \mathcal{M}_2$. Since $R_g \geq 0$, it follows by the Maximum Principle and Theorem \ref{fredholm.alternative} that the operator 
 $$
 L_g=\Delta_g - \frac18 R_g: E^{2,\alpha}_{-1} \g D^{0,\alpha}_{-3}
 $$
 is an isomorphism. Since $R_g \in L^1$, we define $v\in E^{2,\alpha}_{-1}$ to be the solution of $L_gv=\frac18 R_g$, and let 
 $u=1+v$. Hence $L_gu =0$, and $u>0$ by the Maximum Principle.
 
 Define $g_\mu = u_\mu^4g$,   where $\mu \in [0,1]$ and $u_\mu = (1-\mu) + \mu\, u$.
 
 The result follows from Theorem \ref{connectedness.1}, since $g_1 \in \mathcal{M}_1$ and $\mathcal{M}_1 \subset \mathcal{M}_2$. 
 \end{proof}

 We say that $(g,h)$ is an {\it asymptotically flat initial data set} on $\re^3$ if $g$ is a Riemannian metric on $\re^3$ such that
 $g_{ij}-\delta_{ij} \in C^{2,\alpha}_{-1}$, and $h$ is a symmetric $(0,2)$-tensor with $h_{ij} \in C^{1,\alpha}_{-2}$. The {\it Vacuum Constraint Equations} on $\re^3$ are
 \begin{itemize}
 \item [a)] $R_g +(tr_g\, h)^2 - |h|_g^2=0$,
 \item [b)] and $\nabla _i h^{i}_{\ j}-\nabla_j(tr_g\, h) = 0$.
 \end{itemize}
 
 Let $\mathcal{M}_3$ be the set of all asymptotically flat initial data sets $(g,h)$ on $\re^3$ which satisfy the
 Vacuum Constraint Equations, and such that $tr_g\, h=0$.

We  have
 \begin{thm}\label{connectedness.3}
 The set $\mathcal{M}_3$ is path-connected in the $C^{2,\alpha}_{-1} \times C^{1,\alpha}_{-2}$-topology.
 \end{thm}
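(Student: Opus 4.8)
The plan is to couple the conformal (Lichnerowicz--York) method to Theorem \ref{connectedness.1}. Observe first that any $(g,h)\in\mathcal{M}_3$ satisfies $tr_g h=0$, $div_g h=0$ (the momentum constraint reduces to this because $tr_g h=0$), and $R_g=|h|_g^2\ge 0$, with $R_g=|h|_g^2\in C^{1,\alpha}_{-4}\subset D^{0,\alpha}_{-3}$; in particular $g\in\mathcal{M}_2$ and $h$ is transverse--traceless with respect to $g$. As in the proof of Theorem \ref{connectedness.2}, the Maximum Principle together with Theorem \ref{fredholm.alternative} shows that $L_g=\Delta_g-\frac{1}{8}R_g:E^{2,\alpha}_{-1}\to D^{0,\alpha}_{-3}$ is an isomorphism, and there is a unique positive function $u$ with $u-1\in E^{2,\alpha}_{-1}$ and $L_g u=0$; moreover $u\le 1$, since $L_g(1-u)=-\frac{1}{8}R_g\le 0$ and $1-u\to 0$ at infinity.

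For $t\in[0,1]$ I would take $\sigma_t=t\,h$ --- again transverse--traceless with respect to $g$ --- and solve the Lichnerowicz equation
$$L_g\phi_t+\frac{1}{8}\,t^2|h|_g^2\,\phi_t^{-7}=0 .$$
Here $\phi^-=u$ is a subsolution (indeed $L_g u+\frac{1}{8}t^2|h|_g^2 u^{-7}=\frac{1}{8}t^2|h|_g^2 u^{-7}\ge 0$) and $\phi^+=1$ a supersolution (indeed $L_g 1+\frac{1}{8}t^2|h|_g^2=\frac{1}{8}(t^2-1)|h|_g^2\le 0$), with $0<\phi^-\le\phi^+$; the standard sub/supersolution construction in weighted H\"older spaces, combined with the monotonicity of $s\mapsto s^{-7}$, produces a solution $\phi_t$ with $u\le\phi_t\le 1$ and $\phi_t-1\in E^{2,\alpha}_{-1}$, unique among positive solutions tending to $1$ at infinity. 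By the conformal covariance of the constraint equations, $(\tilde g_t,\tilde h_t):=(\phi_t^4 g,\phi_t^{-2}\sigma_t)$ then lies in $\mathcal{M}_3$: the tensor $\tilde h_t$ is trace-free and divergence-free for $\tilde g_t$, and the Hamiltonian constraint $R_{\tilde g_t}=|\tilde h_t|^2_{\tilde g_t}$ is exactly the displayed equation.

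The endpoints are the point of the construction. At $t=1$ the equation reads $L_g\phi=-\frac{1}{8}|h|_g^2\phi^{-7}$, and $\phi\equiv 1$ solves it because $R_g=|h|_g^2$; by uniqueness $\phi_1\equiv 1$, so $(\tilde g_1,\tilde h_1)=(g,h)$. At $t=0$ it reads $L_g\phi_0=0$, hence $\phi_0=u$ and $(\tilde g_0,\tilde h_0)=(u^4 g,0)$, where $R_{u^4 g}=-8u^{-5}L_g u=0$, so $u^4 g\in\mathcal{M}_1$. Since $\{(\gamma,0):\gamma\in\mathcal{M}_1\}\subset\mathcal{M}_3$ and $\mathcal{M}_1$ is path-connected by Theorem \ref{connectedness.1}, a path in $\mathcal{M}_1$ from $u^4 g$ to the flat metric $\delta$ furnishes a path $s\mapsto(\gamma_s,0)$ in $\mathcal{M}_3$ ending at $(\delta,0)$. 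Concatenating the two families joins an arbitrary $(g,h)\in\mathcal{M}_3$ to the fixed element $(\delta,0)$ inside $\mathcal{M}_3$, which proves the theorem.

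What remains is to verify that $t\mapsto(\tilde g_t,\tilde h_t)$ is continuous in the $C^{2,\alpha}_{-1}\times C^{1,\alpha}_{-2}$ topology, and this is where the main technical work lies --- although it is all routine Lichnerowicz--York theory on asymptotically flat manifolds. It reduces to continuity of $t\mapsto\phi_t$ in $C^{2,\alpha}_{-1}$. The sub/supersolution bracketing already gives the uniform bound $c_0:=\inf_{\re^3}u\le\phi_t\le 1$ with $c_0>0$ (as $u>0$ is continuous and $u\to 1$ at infinity). Writing $\phi_t=1+\psi_t$, the equation becomes $L_g\psi_t=\frac{1}{8}|h|_g^2(1-t^2\phi_t^{-7})$, whose right-hand side is bounded in $D^{0,\alpha}_{-3}$ uniformly in $t$ (using $|h|_g^2=O(|x|^{-4})$ and $\phi_t^{-7}\le c_0^{-7}$), so $\{\psi_t\}$ is bounded in $E^{2,\alpha}_{-1}$; given $t_n\to t$, a subsequence of $\psi_{t_n}$ converges in $C^2_{loc}$ (and in $C^{2,\alpha}_{-1+\gamma}$ for small $\gamma>0$), the limit solves the equation at $t$, uniqueness forces the whole family to converge, and feeding this back into the equation together with the isomorphism $L_g^{-1}$ upgrades the convergence to $C^{2,\alpha}_{-1}$. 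Finally $\tilde h_t=t\,\phi_t^{-2}h$ is continuous in $C^{1,\alpha}_{-2}$ because $\phi_t^{-2}\in 1+C^{2,\alpha}_{-1}$ varies continuously and $h\in C^{1,\alpha}_{-2}$, and $\tilde g_t=\phi_t^4 g$ is continuous in $C^{2,\alpha}_{-1}$ for the same reason.
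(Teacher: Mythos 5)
Your proposal is correct and is essentially the paper's argument: the same conformal (Lichnerowicz) method with the same supersolution $1$ (from $R_g=|h|_g^2$) and subsolution $u$ solving $L_gu=0$, the same uniqueness-by-maximum-principle, and the same identification of the endpoints $t=1$ and $t=0$. The only difference is organizational --- you build the path $t\mapsto(\phi_t^4g,\phi_t^{-2}th)$ directly and then invoke Theorem \ref{connectedness.1} via $\mathcal{M}_1\times\{0\}\subset\mathcal{M}_3$, whereas the paper packages the same construction as a continuous surjection $F:\tilde{\mathcal{M}}_3\to\mathcal{M}_3$ applied to the path $(g,(1-\mu)h)$ together with Theorem \ref{connectedness.2} --- and your continuity argument for $t\mapsto\phi_t$ is, if anything, more detailed than the paper's appeal to standard weighted elliptic regularity.
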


 \begin{proof} Let $\tilde{\mathcal{M}}_3$ be the set of all asymptotically flat initial data sets on $\re^3$ such that
 \begin{itemize}
 \item [a)] $tr_g\, h = 0$,
 \item [b)] $R_g \geq |h|_g^2$ and $R_g \in L^1$,
 \item [c)] and $(div_g\, h)_j:=\nabla _i h^{i}_{\ j}= 0$.
 \end{itemize}
 
 Given $(g,h) \in \tilde{\mathcal{M}}_3$,  the path $\mu \in [0,1] \mapsto (g, (1-\mu)h)$ is continuous and contained
 in $\tilde{\mathcal{M}}_3$. It follows then from Theorem \ref{connectedness.2} that $\tilde{\mathcal{M}}_3$ is path-connected.
 It suffices to show that there exists a continuous and surjective map $F: \tilde{\mathcal{M}}_3 \g \mathcal{M}_3$. This
 can be accomplished by the conformal method as follows.
 
 If $(g,h) \in \tilde{\mathcal{M}}_3$, we want to find  a positive function $u$ such that $(\hat{g},\hat{h}) \in \mathcal{M}_3$,
 where $\hat{g}=u^4\, g$, and $\hat{h}=u^{-2}\, h$. This is equivalent to solving $R_{\hat{g}}=|\hat{h}|_{\hat{g}}^2$, since
 it is easily checked that $tr_{\hat{g}}\, \hat{h} = 0$ and $div_{\hat{g}}\, \hat{h}=0$ for any $u>0$.
 
 The equation $R_{\hat{g}}=|\hat{h}|_{\hat{g}}^2$ translates into the Lichnerowicz equation
 \begin{equation}\label{lichnerowicz}
 \Delta_g u - \frac18 R_g u + \frac18 |h|_g^2 u^{-7} = 0.
 \end{equation}

 Since $R_g \geq |h|_g^2$, the function $u_+=1$ is a supersolution to the equation (\ref{lichnerowicz}). 
 
 We can also solve
 $L_gv=\frac18 R_g$ as in the proof of  Theorem \ref{connectedness.2}, and set $u_-=1+v$. It follows by the Maximum Principle
 that $0<u_- \leq 1$. We also have
 \begin{eqnarray*}
  \Delta_g u_- - \frac18 R_g u_- + \frac18 |h|_g^2 u_-^{-7} = \frac18 |h|_g^2 u_-^{-7} \geq 0.
 \end{eqnarray*}
 Therefore $u_-$ is a subsolution to the equation (\ref{lichnerowicz}), with $u_- \leq u_+$.
 
 The method of sub and supersolutions gives the existence of a positive function $u \in 1+ D^{2,\alpha}_{-1}$ which
 solves the equation (\ref{lichnerowicz}) and such that $u_- \leq u \leq u_+$. If $u_1$ and $u_2$ are solutions, we have
 $$
 \Delta_g(u_1-u_2) = \frac18 R_g (u_1-u_2) + \frac18 |h|_g^2 (u_2^{-7}-u_1^{-7}). 
 $$
 It follows by the Maximum Principle that $u_1=u_2$.
 
 Therefore the map $F: \tilde{\mathcal{M}}_3 \g \mathcal{M}_3$ given by
 $$
 F\, (g,h) = (u^4\, g, u^{-2}\, h)
 $$
 is well-defined. It is also surjective since it restricts to the identity on $\mathcal{M}_3$. The continuity
 of $F$ follows from standard elliptic regularity on weighted spaces. This finishes the proof of the theorem.
 \end{proof}

 {\it Remark:}  Similar results can be derived for the corresponding moduli spaces of other 3-manifolds. Suppose, for instance, that $(M^3,g)$ is an asymptotically flat manifold with zero scalar curvature and $n$ ends. The arguments of this section
 can be adapted to prove that this manifold can be deformed, through asymptotically flat metrics of zero scalar curvature, into a manifold isometric to the blow-up of
 a canonical metric (as in Section \ref{proofs}) at $n$ distinct points. The blow-up of a compact manifold $(\overline{M}^3,\overline{g})$ of positive
 scalar curvature at the points $x_1,\dots,x_n \in M$ is the scalar-flat and asymptotically flat manifold  $\hat{g} = (\sum_{i=1}^n G_{x_i})^4 \overline{g}$, where $G_{x_i}$ is the Green's function of the conformal
 Laplacian $L_{\overline{g}}$ with pole at $x_i$. The idea is again to first deform $(M^3,g)$ into a manifold which can be conformally compactified (as in Propositions \ref{deforming.infinity} and \ref{compactification.prop}), and then apply
 the Main Theorem of this paper.
 


\footnotesize

{\sc \noindent Fernando Coda Marques, Instituto de Matem\'{a}tica Pura e Aplicada (IMPA), Estrada Dona Castorina 110, 22460-320, Rio de Janeiro - RJ,
Brazil}

\noindent {\it Email address : \ } coda@impa.br

\end{document}